\documentclass[11pt]{amsart}

\usepackage{amsmath,amssymb,mathtools,mathrsfs}
\usepackage[margin=1.12in]{geometry}
\usepackage{enumitem,tikz-cd}
\usepackage{needspace}
\usepackage[colorlinks=true,linkcolor=blue,citecolor=blue,urlcolor=blue]{hyperref}
\usepackage{cleveref}

\setlist[enumerate]{label=\textup{(\roman*)},leftmargin=2.1em}

\numberwithin{equation}{section}
\newtheorem{theorem}{Theorem}[section]
\newtheorem{proposition}[theorem]{Proposition}
\newtheorem{lemma}[theorem]{Lemma}
\newtheorem{corollary}[theorem]{Corollary}
\newtheorem*{maintheorem}{Main Theorem}

\theoremstyle{definition}
\newtheorem{definition}[theorem]{Definition}
\newtheorem{example}[theorem]{Example}
\newtheorem{setup}[theorem]{Setup}
\crefname{setup}{Setup}{Setups}

\theoremstyle{remark}
\newtheorem{remark}[theorem]{Remark}
\newtheorem{claim}[theorem]{Claim}

\DeclareMathOperator{\Spec}{Spec}
\DeclareMathOperator{\Op}{Op}

\DeclareMathOperator{\Pic}{Pic}

\DeclareMathOperator{\Aut}{Aut}

\DeclareMathOperator{\Map}{Map}
\DeclareMathOperator{\Grad}{Grad}
\DeclareMathOperator{\Filt}{Filt}
\DeclareMathOperator{\gr}{gr}
\DeclareMathOperator{\ev}{ev}
\DeclareMathOperator{\id}{id}
\DeclareMathOperator{\rk}{rk}
\DeclareMathOperator{\wt}{wt}
\DeclareMathOperator{\codim}{codim}
\DeclareMathOperator{\ddiv}{div}
\DeclareMathOperator{\coker}{coker}
\DeclareMathOperator{\tors}{tors}
\DeclareMathOperator{\Supp}{Supp}
\DeclareMathOperator{\GL}{GL}
\DeclareMathOperator{\SL}{SL}
\DeclareMathOperator{\PGL}{PGL}
\DeclareMathOperator{\Mat}{Mat}
\DeclareMathOperator{\diag}{diag}
\newcommand{\Gm}{{\mathbb{G}_m}}
\newcommand{\bbA}{\mathbb A}
\newcommand{\PP}{\mathbb P}
\newcommand{\cA}{\mathscr A}

\newcommand{\cO}{\mathcal O}
\newcommand{\cX}{\mathscr X}
\newcommand{\cY}{\mathscr Y}
\newcommand{\cU}{\mathscr U}
\newcommand{\cV}{\mathscr V}
\newcommand{\cD}{\mathscr D}
\newcommand{\cI}{\mathcal I}
\newcommand{\cL}{\mathcal L}
\newcommand{\cN}{\mathcal N}
\newcommand{\cP}{\mathcal P}
\newcommand{\cB}{\mathcal B}
\newcommand{\cK}{\mathcal K}
\newcommand{\cQ}{\mathcal Q}

\newcommand{\rF}{\mathrm F}

\newcommand{\cZ}{\mathscr Z}
\newcommand{\sst}{\mathrm{ss}}
\newcommand{\red}{\mathrm{red}}

\newcommand{\GIT}{\mathrm{GIT}}
\newcommand{\PD}{\mathrm{PD}}

\newcommand{\bbC}{\mathbb C}
\newcommand{\bbQ}{\mathbb Q}
\newcommand{\bbZ}{\mathbb Z}
\newcommand{\bbF}{\mathbb F}
\newcommand{\bbT}{\mathbb T}
\newcommand{\rmT}{\mathrm T}

\title[Open substacks preserving moduli spaces: classification and bounds]{Open substacks preserving moduli spaces: \\ classification and bounds}
\author{Jianping Wang and Xucheng Zhang}
\address{Beijing Institute of Mathematical Sciences and Applications, Beijing 101408, China}
\address{Email: jianpw@bimsa.cn}
\address{School of Mathematical Sciences, Beihang University, Beijing 102206, China}
\address{Email: xuchengz@buaa.edu.cn}

\date{}
\subjclass[2020]{Primary 14D23; Secondary 14L24, 14L30, 14D06}
\keywords{Adequate moduli space, open preserving substacks, Cartier divisor, locally reductive, relative Picard rank}

\begin{document}
\linespread{1.15}\selectfont
\begin{abstract}
For a smooth locally reductive stack admitting an adequate moduli space, we show that there are finitely many open substacks preserving the adequate moduli space, and give a complete classification of such opens. For $\Gm$-actions, this becomes an explicit description in terms of the codimension-one Bia\l{}ynicki-Birula strata. Moreover, for a stable adequate moduli space, we show that the number of preserving opens are bounded by $3$ raised to the relative Picard rank. As an application, the number of fixed components for $\Gm$-actions is at least the relative Picard rank.
\end{abstract}

\maketitle

\section{Introduction}
Passing to an open substack can change the moduli spaces substantially, as shown by the typical example $[\bbA^2-\{0\}/\Gm] \subseteq [\bbA^2/\Gm]$, where $\Gm$ acts on $\bbA^2$ with weight $(1,1)$. In particular,  birationality does not descend to moduli spaces. In this paper, we study the extreme case in which an open substack shares the same moduli space with the ambient stack.

Let us make this precise. For any adequate moduli space $\pi: \cX \to X$, an open substack $\cU \subseteq \cX$ is said to \emph{preserve} $\pi$ if the restriction $\pi|_{\cU}: \cU \to X$ is still an adequate moduli space. One can artificially construct many such opens in the case of quotient stacks. For example, let $\Gm$ act on $\bbA^2$ with weight $(+1,-1)$, removing either coordinate axis preserves the good quotient (while removing their union does not). More natural examples occur in moduli theory, and we only mention few of them here. It is quite often that chamber-to-wall open immersions in K-moduli induce isomorphisms on the good moduli spaces, examples include: the first wall for plane curves \cite[Thm. 5.5]{ADL}, the second wall for plane quintic curves \cite[Thm. 7.1 (2)]{ADL}, and the wall at $c=1/3$ for quartic K3 surfaces \cite[Thm. 5.14 (1)-(2)]{ADLQuarticK3}. There are also examples from the Hassett-Keel program: the wall at $\alpha=9/11$ for stable pointed curves \cite[Main Thm. (of the trilogy)]{second-flip}. Another example is the inclusion of the GIT-stack of semistable binary into the stack of boundary-polarized Calabi-Yau curves \cite[Thm. 16.9 (2)]{ABBDeVILW23}. In general, extended weighted blow-ups provide a systematic construction of enlargements preserving the good moduli spaces \cite[Thm. 1.7]{DLIStableMaps}.

These examples lead to natural questions: which open substacks preserve a given adequate moduli space, and whether there can be infinitely many of them. Our main result answers this completely for smooth locally reductive stacks.
\begin{maintheorem}[\Cref{thm:geometric-good} and \Cref{cor:good-finite}]
Let $k$ be an algebraically closed field and let $\cX$ be a smooth locally reductive algebraic stack of finite type over $k$, with separated diagonal. Suppose that $\cX$ admits an adequate moduli
space $\pi: \cX \to X$. Then only finitely many open substacks of $\cX$ preserve $\pi$. Their reduced boundaries are unions of prime divisors taken from a fixed finite collection, and the collections of divisors that can be removed simultaneously form a finite simplicial complex determined by boundary weights.
\end{maintheorem}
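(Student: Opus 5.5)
The plan is to reduce to the local situation given by the étale slice theorem for adequate moduli spaces, and there to turn ``preserving $\pi$'' into a condition on closed points that can be read off from divisors. The starting criterion is the following. An open $\cU\subseteq\cX$ preserves $\pi$ exactly when $\cU$ is saturated for $\pi$ in the sense that $\cU=\pi^{-1}(\pi(\cU))$ fails only along non-closed orbits, contains every closed point of every fibre of $\pi$, and satisfies $\pi|_{\cU}$ surjective with $\cO_X\to\pi_*\cO_{\cU}$ an isomorphism. Put differently, $\cU$ must contain all closed points of $\cX$ (the polystable locus), and $\pi_*\cO_\cX\to\pi_*\cO_\cU$ must be an isomorphism. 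For an open whose complement $\cZ$ has codimension $\ge 2$, smoothness (normality) and Hartogs give the isomorphism on functions, and adequacy of $\pi|_\cU$ follows from affineness of $\cU\to X$. That affineness holds locally because $\cU$ contains the closed orbits and is then the preimage of a saturated open in each slice. So the only real constraint comes from the divisorial part of the boundary, and we may pass to the maximal such $\cU$ with given divisorial boundary.

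Next, let $D_1,\dots,D_n$ be the prime divisors of $\cX$ that contain no closed point of $\cX$. This collection is finite: the closed-point locus is constructible and $\pi$-saturated in a suitable sense, and locally on $X$, via Luna-type slices $[\Spec A/G_x]\to \Spec A^{G_x}$ with $G_x$ linearly reductive (local reductivity), such divisors are components of the complement of the semistable locus for finitely many characters. A divisor meeting the closed points can never be removed, so the reduced boundary of any preserving open is a union of some $D_i$, up to codimension-two pieces already absorbed. For a subset $S\subseteq\{1,\dots,n\}$, consider the open $\cU_S=\cX\setminus\bigcup_{i\in S}D_i$. Since $\cX$ is smooth, each $D_i$ is Cartier, so $\Gamma(\cU_S,\cO)=\varinjlim_m \Gamma(\cX,\cO(m\sum_{S}D_i))$. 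Hence $\cU_S$ preserves $\pi$ if and only if no invariant rational function has poles only along $\bigcup_S D_i$. In the local slice this becomes a condition on the characters (boundary weights) $\chi_i$ of $G_x$ by which the local equations of the $D_i$ transform: semi-invariants with nonnegative weight combination must not become invariant after inverting. Concretely, the condition is that $0$ is not in the relative interior of the cone spanned by $\{\chi_i\}_{i\in S}$, or equivalently that $\sum_{S}a_i\chi_i=0$ with $a_i\ge0$ forces all $a_i=0$. This condition is closed under taking subsets, so the admissible $S$ form a simplicial complex determined by the weights, which gives the classification.

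Finiteness then follows. Every preserving open is $\cU_S$ minus a closed subset of codimension $\ge 2$ avoiding the closed points. I would need to show that such a removal preserves $\pi$ only if it is empty, or else reinterpret the theorem as counting opens up to this. Here smoothness helps again: a codimension-$\ge2$ non-closed-orbit locus $\cZ$ in a fibre has closure containing closed points only through orbits specializing into it, and removing it breaks the adequacy or surjectivity condition. The main obstacle is exactly this step together with globalizing the weight criterion. Different slices must give compatible answers for the same divisor $D_i$, which I would handle by noting that the criterion only depends on the generic stabilizer action along the normal direction of $D_i$ at a closed point of $\overline{D_i}$. The local-to-global passage uses that adequacy is étale-local on $X$.
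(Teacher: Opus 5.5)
There is a genuine gap, and it is in your starting criterion. A preserving open need not contain every closed point of $\cX$. Take $\Gm$ acting on $\bbA^2$ with weights $(1,-1)$. The open $\{y\neq 0\}$ is affine, has invariant ring $k[xy]$, and preserves $\bbA^2\to\bbA^1$, yet it omits the origin, which is the closed point over $0$.

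The problem is worse than a counterexample. Since $\cX$ is locally reductive, every point specializes to a closed point, so every non-empty closed substack contains closed points of $\cX$. Your family $D_1,\dots,D_n$ of prime divisors avoiding the closed points is therefore empty, and your argument would force $\Op(\pi)=\{\cX\}$. In fact removable divisors always contain closed points (the limits of the destabilizing degenerations), and removing them creates new closed points in the fibres.

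Your concrete weight test also fails, independently of the above. Your reformulation via $\varinjlim_m\cO(m\sum_S D_i)$ is correct; it is the paper's (i)$\Leftrightarrow$(ii). The difficulty is entirely in converting it into a weight condition, and the cone condition on characters of local equations is not sufficient. For $\Gm$ acting on $\bbA^2$ with weights $(1,1)$, the divisor $\{x=0\}$ passes your test. But $y/x$ is a non-constant invariant on its complement, so $\{x=0\}$ is not removable.

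Hartogs also does not dispose of codimension $\geq 2$ boundaries. Removing the origin in the $(1,-1)$ example keeps the invariants $k[xy]$, but the fibre over $0$ then has two closed orbits. What fails there is affineness, and this is exactly how the paper rules such boundaries out. A preserving open immersion is affine (\Cref{lem:affine-open}). Hence its complement is pure of codimension one and, by smoothness, is a reduced Cartier divisor.

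The ingredients you are missing are the following.
\begin{enumerate}
\item For a reduced Cartier divisor $i:\cD\hookrightarrow\cX$, removability is equivalent to $(\pi\circ i)_*\cN_{\cD/\cX}^{\otimes n}=0$ for all $n>0$. Adequacy makes positive-degree sections nilpotent, and reducedness kills them.
\item On a chart $[V/G]$ this says $D$ has no semistable points for its normal bundle. The Hilbert--Mumford criterion converts this into: every point of $\cD$ is $f(1)$ for a filtration $f:\Theta\to\cX$ with $\wt(f^*\cO_\cX(\cD))<0$.
\item Hence $\cD=\bigcup_{\wt(\cO_\cX(\cD)|_Z)<0}\cA_Z$ over $Z\in\pi_0(\Grad(\cX))$. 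The divisor must be swept out by attracting loci, and the relevant weights are those of cocharacters at the limit points, not characters of full stabilizers at closed points.
\item Finiteness comes from the finiteness of the attracting images $\cA_Z$: there are finitely many $G.C_{\lambda,F}$ on each chart, by reduction to a maximal torus and affine $\bbT$-charts.
\item Irreducibility of $\cA_Z$ comes from smoothness of $\Filt(\cX)$.
\item The local-to-global step uses $\Filt(\cX_i)\simeq\Filt(\cX)\times_X X_i$, together with the fact that preservation can be tested \'{e}tale locally on $X$.
\end{enumerate}
The simplicial complex is then the set of $\sigma$ for which, for each $i\in\sigma$, there is $Z_i$ with $\cD_i=\cA_{Z_i}$ and $\wt(\cO_\cX(\cD_\sigma)|_{Z_i})<0$.
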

\begin{remark}
One of the main difficulties in proving \textbf{Main Theorem} lies in identifying the correct characterization of the boundary. This is achieved by verifying many artificially constructed quotient stacks, especially those arising from torus actions. Once this is done, the generalization to locally reductive stacks is fairly straightforward. A distinctive feature of the proof is its combination of classical and modern methods: GIT handles the local case (quotient stacks), while stack-theoretic machinery handles the global case (locally reductive stacks).
\end{remark}
The proof begins with a criterion valid for arbitrary adequate moduli space. The open immersion from a preserving open $\cU \subseteq \cX$ must be affine, so its reduced boundary $(\cX \setminus \cU)_\red$ is an effective Cartier divisor (\Cref{lem:boundary-purity}), and we call such a boundary \emph{removable}. For a reduced effective Cartier divisor $i: \cD \hookrightarrow \cX$, let $\cN=\cN_{\cD/\cX}$ be its normal line bundle, and we prove that (\Cref{thm:adequate-boundary})
\[
\cX \setminus \cD \text{ preserves } \pi \Leftrightarrow (\pi \circ i)_*\cN^{\otimes n} = 0 \text{ for every } n > 0.
\]
The key point is that adequate affineness makes positive-degree sections of the normal line bundle nilpotent, and reducedness of $\cD$ forces them to vanish. This criterion also gives Picard-theoretic obstructions to remove a boundary (\Cref{prop:descent-obstruction}).

We next turn to a geometric characterization of the removable boundary. For quotient stacks arising from reductive group actions, the key input is the Hilbert–Mumford criterion from Geometric Invariant Theory (GIT). To facilitate its adaptation to general algebraic stacks, we use a reformulation of the Hilbert–Mumford criterion in terms of filtrations (\cite[\S 1.D]{Heinloth})
\[
f: \Theta:=[\bbA^1/\Gm] \to \cX.
\]

Using this notation, a reduced effective Cartier divisor $\cD$ on $\cX$ is removable precisely when every geometric point of $\cD$ is realized as $f(1)$ for some filtration $f: \Theta \to \cX$ such that the weight of $f^*\cO_{\cX}(\cD)$ at $0:=\mathrm{B}\mathbb{G}_m \in \Theta$ is negative (see condition (\S) in the proof of \Cref{thm:quotient-boundary}). Since the discrete weight is constant on connected components, this condition forces any removable boundary $\cD$ to contain the $G$-saturations of the components of the attracting loci on which the line bundle $\cO_{\cX}(\cD)$ has negative weight (\Cref{cor:weight-inclusion}). In particular, the prime removable boundaries occur among the individual $G$-saturations, and a union of prime removable boundaries is removable if the weight condition is satisfied (\Cref{cor:finite-reductive}). Finiteness already follows because only finitely many distinct attracting loci occur (\Cref{lem:quotient-images}), even though there may be infinitely many cocharacters. 

The passage from quotient stacks to locally reductive stacks is straightforward, once we observe that preserving of opens can be checked \'{e}tale locally on the moduli space (\Cref{prop:chart-test}), and the very definition of local reductivity supplies precisely the required local presentations. The filtration reformulation of Hilbert–Mumford criterion above then concisely expresses the removable prime boundary $\cD$ as the evaluation image in $\cX$ of a negative-weight connected component of the filtration stack $\Filt(\cX)$ (Corollary \ref{cor:good-finite}).

For a \emph{stable} adequate moduli space (in the sense of \Cref{defn:stable_adequate_moduli}), we can give a simple and sharp bound for the numbers of preserving opens via the relative Picard rank (\Cref{thm:stack-picard-bound}). This relative Picard rank is finite under very mild assumptions (\Cref{prop:relative-picard-finiteness}), and for quotient stacks there are some estimates for it using ordinary Picard group (\Cref{cor:reductive-relative-picard-comparison}). In particular, if $\Pic(X)$ is torsion, the number of preserving opens is at most $3^{\rk G/[G,G]}$ (\Cref{cor:ordinary-picard-counting}). Examples from torus actions show that the ``stable'' hypothesis cannot be dropped in general (\Cref{ex:torus-no-stable-unbounded}). As an application, we show that for $\Gm$-actions, the number of the fixed components is at least the relative Picard rank (\Cref{prop:fixed-weight-kernel}). In particular, if $\Pic(X)$ is torsion, there are at most 3 preserving opens (\Cref{cor:fixed-torsion-picard}).

\medskip
\noindent\textbf{Organization of the paper.} \Cref{sec:general} develops the general boundary criterion. \Cref{sec:reductive} proves the geometric criterion in the case of quotient stacks, and it is globalized to locally reductive stacks in \Cref{sec:locally-reductive} using Cartesian \'{e}tale quotient charts. Finally, \Cref{sec:bound} bounds the number of preserving opens in terms of the relative Picard rank, and also relates this rank to the number of fixed components for $\Gm$-actions.

\medskip
\noindent\textbf{Conventions.}
All algebraic stacks and algebraic spaces are quasi-separated.
A variety is a separated scheme of finite type over an algebraically closed field. A reductive group scheme is a smooth affine group scheme of finite presentation with connected reductive geometric fibers.

\medskip
\noindent\textbf{AI declarations.} The proof that a removable boundary for quotient stacks must be a $G$-saturation was obtained by the authors. In exploring the converse direction, an iterative discussion with ChatGPT 6 Astra led the authors to identify the negative-weight condition needed to guarantee that such a $G$-saturation is removable; this observation was then independently developed and verified by the authors. ChatGPT 6 Astra was also used to check that the fixed locus in \Cref{cor:affine-space-three-opens} is non-empty and connected, and for proofreading.

\subsection*{Acknowledgment}
This work began when both authors were postdocs at YMSC, Tsinghua University. We thank the institution for its excellent working conditions. X.C. Zhang was partially supported by the Fundamental Research Funds for the Central Universities (Grant No. KG16420801 and KG16492301).

\section{General properties}\label{sec:general}

\begin{definition}
Let $\pi: \cX \to X$ be an adequate moduli space. An open substack $\cU \subseteq \cX$ is said to \emph{preserve $\pi$} if the restriction $\pi|_{\cU}: \cU \to X$ is an adequate moduli space. Equivalently, the open substack $\cU$ admits an adequate moduli space $\cU \to U$ and the induced morphism between adequate moduli spaces
\[
\begin{tikzcd}
\cU \ar[r,hook] \ar[d] & \cX \ar[d,"\pi"] \\
U \ar[r,"\sim"] & X
\end{tikzcd}
\]
is an isomorphism. Denote by $\Op(\pi)$ the set of open substacks of $\cX$ preserving $\pi$, including $\cX$ itself.

A closed substack $\cD \subseteq \cX$ is called \emph{removable} for $\pi$ if its open complement $\cX \setminus \cD \subseteq \cX$ preserves $\pi$.
\end{definition}
\begin{lemma}[Preserving opens through a gerbe]\label{lem:gerbe}
Let $\pi: \cX \to X$ be an adequate moduli space, and let $f: \cY \to \cX$ be a quasi-compact quasi-separated fppf gerbe, banded by a linearly reductive group scheme over $\cX$. Then $\pi \circ f: \cY \to X$ is an adequate moduli space. Moreover, pullback along $f$ induces a bijection between the open substacks preserving $\pi$ and those preserving $\pi \circ f$.
\end{lemma}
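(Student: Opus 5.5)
The plan is to reduce both assertions to one fact about $f$: it is itself a good (hence adequate) moduli space onto $\cX$. This gives the first assertion by composition. For the second, it shows that $f$ behaves like a homeomorphism on opens, compatibly with adequate moduli spaces.

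\emph{Step 1: $f$ is cohomologically affine and $f_*\cO_\cY=\cO_\cX$.} Both properties can be checked fppf locally on $\cX$. After an fppf cover $T\to\cX$ by a scheme that trivializes the gerbe, $\cY_T\cong \mathrm{B}_T H$ for a linearly reductive group scheme $H$ over $T$. Pushforward along $\mathrm{B}_TH\to T$ is the invariants functor, which is exact because $H$ is linearly reductive. It also sends $\cO$ to $\cO$. Since $f$ is qcqs and flat base change holds for pushforward, these local statements descend. Hence $f$ is a good moduli space, in particular an adequate one. A composition of adequate moduli spaces is again adequate: adequate affineness composes (cf.\ Alper's results), and $(\pi\circ f)_*\cO_\cY=\pi_*f_*\cO_\cY=\pi_*\cO_\cX=\cO_X$. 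So $\pi\circ f$ is an adequate moduli space.

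\emph{Step 2: the bijection on opens.} Since $f$ is an fppf gerbe, it is surjective, flat and locally of finite presentation, hence universally open and surjective. Therefore $\cU\mapsto f^{-1}(\cU)$ is injective on open substacks. It is also surjective: for any open $\cV\subseteq\cY$ we have $\cV=f^{-1}(f(\cV))$. The reason is that the fibres of $f$ over field-valued points are gerbes over a field, hence have a single point. Thus $|\cY|\to|\cX|$ is a homeomorphism, as a bijective continuous open map. Now fix an open $\cU\subseteq\cX$ and set $\cV=f^{-1}(\cU)$. The restriction $f|_\cV\colon\cV\to\cU$ is again a qcqs fppf gerbe banded by a linearly reductive group, so it is a good moduli space by Step 1.

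It remains to compare $\pi|_\cU$ with $\pi\circ f|_\cV$. If $\pi|_\cU$ is adequate, then $\pi\circ f|_\cV=\pi|_\cU\circ f|_\cV$ is adequate by Step 1. Conversely, suppose $\pi\circ f|_\cV$ is adequate. We need $\pi|_\cU$ to be adequately affine with $(\pi|_\cU)_*\cO=\cO_X$. The second condition follows from $(\pi|_\cU)_*\cO_\cU=(\pi|_\cU)_*(f|_\cV)_*\cO_\cV=(\pi\circ f|_\cV)_*\cO_\cV=\cO_X$. For adequate affineness, take a surjection $A\to B$ of quasi-coherent $\cO_\cU$-algebras and a section $s$ of $(\pi|_\cU)_*B$. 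Pull back along $f|_\cV$. Because $f|_\cV$ is flat, $f^*A\to f^*B$ is still surjective. By adequacy of $\pi\circ f|_\cV$, some power of $s$ lifts to $(\pi\circ f|_\cV)_*f^*A=(\pi|_\cU)_*(f_*f^*A)$.

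\emph{Main obstacle.} The difficulty is the last step: we need $f_*f^*A=A$, so that the lift lands in $(\pi|_\cU)_*A$. This is the projection formula for the gerbe. It holds because $f_*$ commutes with flat base change, and locally $f$ is $\mathrm{B}_TH\to T$ with $f^*A$ carrying the trivial $H$-action, so its invariants are all of $f^*A$. Equivalently, the unit $\mathrm{id}\to f_*f^*$ is an isomorphism on quasi-coherent sheaves. Once this is checked fppf locally, the second assertion follows.
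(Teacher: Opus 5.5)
Your proposal is correct and follows essentially the same route as the paper. Fppf-locally $f$ is $\mathrm{B}H \to T$, so $f$ is cohomologically affine with $f_*\cO_{\cY}=\cO_{\cX}$, and the first claim follows by composing adequately affine morphisms. For the converse you use $f_*\cO_{\cY}=\cO_{\cX}$ and the unit isomorphism $\cA \xrightarrow{\sim} f_*f^*\cA$ to identify $(\pi\circ f)_*f^*\cA \to (\pi\circ f)_*f^*\cB$ with $(\pi|_{\cU})_*\cA \to (\pi|_{\cU})_*\cB$; this is exactly the paper's ``adjunction isomorphisms'' step, and it transfers universal adequacy wholesale. The only difference is cosmetic: you prove $|\cY|\to|\cX|$ is a homeomorphism by hand (open, surjective, one-point fibres), where the paper cites that gerbes are universal homeomorphisms.
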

\begin{proof}
We first show that $\pi \circ f$ is an adequate moduli space. Fppf locally on $\cX$, the gerbe $f$ is of the form $\cX \times \mathrm{B}G \to \cX$. Linear reductivity of $G$ implies that $f_*$ is exact on quasi-coherent modules and $f_*\cO_{\cY}=\cO_{\cX}$. Thus $f$ is cohomologically affine, and $\pi \circ f$ is adequately affine by
\cite[Prop. 4.2.1 (1)--(2)]{AlperAdequate}. Since $(\pi \circ f)_*\cO_{\cY} \simeq \pi_*\cO_{\cX} \simeq \cO_X$, the composition $\pi \circ f$ is an adequate moduli space.

A gerbe is a universal homeomorphism \cite[\href{https://stacks.math.columbia.edu/tag/06R9}{Tag 06R9}]{stacks-project}. So every open substack $\cU \subseteq \cY$ is of the form $f^{-1}(\cV)$ for some unique open substack $\cV \subseteq \cX$. Let
\[
f_{\cV}: \cU \to \cV \text{ and } \pi_{\cV}:=\pi|_{\cV}.
\]
The above arguments apply to $f_{\cV}$. In particular, if $\cV$ preserves $\pi$, then $\cU$ preserves $\pi \circ f$.

Conversely, suppose that $\pi_{\cV} \circ f_{\cV}: \cU \to X$ is an adequate moduli space. Then $(\pi_{\cV})_*\cO_{\cV} \simeq (\pi_{\cV} \circ f_{\cV})_*\cO_{\cU} \simeq \cO_X$. To check adequate affineness, let $\mathcal{A} \twoheadrightarrow \cB$ be a surjection of quasi-coherent $\cO_{\cV}$-algebras. Using the adequate affineness of $\pi_{\cV} \circ f_{\cV}$, we obtain a
universally adequate homomorphism
\[
(\pi_{\cV} \circ f_{\cV})_* \circ f_{\cV}^*\mathcal{A} \to (\pi_{\cV} \circ f_{\cV})_* \circ f_{\cV}^*\cB.
\]
The adjunction isomorphisms identify this homomorphism with $(\pi_{\cV})_*\mathcal{A} \to (\pi_{\cV})_*\cB$. Hence $\pi_{\cV}$ is adequately affine and therefore
an adequate moduli space.
\end{proof}
First we observe that the inclusion from preserving opens is affine.
\begin{lemma}\label{lem:affine-open}
Let $\cX$ be an algebraic stack with affine diagonal, and let $\pi: \cX \to X$ be an adequate moduli space. If an open immersion $j: \cU \hookrightarrow \cX$ preserves $\pi$, then $j$ is affine.
\end{lemma}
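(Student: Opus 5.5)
The approach is to factor $\pi$ through the open immersion and use that $\pi|_{\cU}$ and $\pi$ are both adequate moduli spaces, so both are adequately affine. The goal is to show $j$ is affine by first showing it is adequately affine, and then upgrading. Concretely, $\pi|_{\cU} = \pi \circ j$ with $\pi|_{\cU}$ adequately affine. Since $\cX$ has affine diagonal, $j$ is representable (it is an open immersion), so affineness of $j$ can be checked smooth locally on $\cX$, or after base change to an affine scheme mapping to $\cX$.

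\textbf{Step 1.} I would first reduce to a statement about $X$. Since $\pi\circ j$ is an adequate moduli space, the question is étale local on $X$, so I may assume $X=\Spec A$ is affine. Then $\cU$ and $\cX$ are both adequately affine stacks, i.e.\ $\Gamma$ is ``adequately exact'' on quasi-coherent algebras.

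\textbf{Step 2.} I would then test $j$ against affine schemes. Let $T=\Spec B \to \cX$ be a smooth (or any) morphism. Because $\cX$ has affine diagonal, $T\to \cX$ is affine, and hence so is $\cU_T:=\cU\times_{\cX}T \to \cU$. An affine morphism composed with the adequately affine map $\cU\to \Spec A$ is adequately affine (\cite[Prop. 4.2.1]{AlperAdequate}). Thus $\cU_T$ is an open subscheme of $T$ that is adequately affine over $\Spec A$, and in particular adequately affine over $\Spec k$-relative to $A$.

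\textbf{Step 3.} Finally, I would invoke Serre's criterion in the adequate setting. Alper's adequate version of Serre's criterion (\cite[Thm. 4.3.1]{AlperAdequate}: an algebraic space is affine iff adequately affine over an affine base) gives that the algebraic space $\cU_T$ is affine. Hence $j$ is affine, as affineness of a representable morphism is local in the smooth topology on the target.

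The main obstacle is Step 3: making sure the adequate Serre criterion applies to the quasi-separated algebraic space $\cU_T$. This requires $\cU_T$ to be quasi-compact over $A$, which holds because $\cX$ and hence $\cU$ is quasi-compact over $X$ (the map $\pi\circ j$ is quasi-compact). If that citation were unavailable, I would instead argue directly that $\cU_T\to \Spec \Gamma(\cU_T,\cO)$ is an isomorphism. The plan would be to use that adequately affine implies every quasi-coherent ideal sheaf has enough sections up to integral (power) extensions, and then run the usual Serre argument with $p$-th powers.
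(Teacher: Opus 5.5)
Your proposal is correct and follows essentially the same route as the paper: reduce to $X$ affine, pull $j$ back along an affine smooth chart $T \to \cX$ (affine because of the affine diagonal; the paper checks this via the factorization $Y \to Y \times_X \cX \to \cX$), observe that the composite $\cU_T \to \cU \to X$ is adequately affine and qcqs, and apply \cite[Thm. 4.3.1]{AlperAdequate} to get $\cU_T$ affine. The remaining differences are cosmetic: you get quasi-compactness of $\cU_T$ from $\pi|_{\cU}$, and you use arbitrary affine smooth charts rather than a single affine atlas.
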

\begin{proof}
Affineness of $j$ can be checked after an \'{e}tale covering of $X$, and adequate moduli spaces are preserved by \'{e}tale base change. We may assume that $X$ is affine. Because $\pi$ is quasi-compact, $\cX$ is quasi-compact, so it admits a smooth surjective atlas $p: Y \to \cX$ with $Y$ affine. To conclude, we show the base change $j_Y: Y \times_\cX \cU \to Y$ is affine. First we show that $p$ is affine. Consider the Cartesian diagram
\[
\begin{tikzcd}
Y \ar[r,"\Gamma_p"] \ar[d] & Y \times_X \cX \ar[r] \ar[d] & Y \ar[d,"p"'] & Y \times_\cX \cU \ar[d,"p_{\cU}"] \ar[l,hook,"j_Y"'] \\
\cX \ar[r,"\Delta_\pi"'] & \cX \times_X \cX \ar[r] \ar[d] & \cX \ar[d,"\pi"] & \cU \ar[l,hook,"j"] \\
& \cX \ar[r,"\pi"'] & X
\end{tikzcd}
\]
So the atlas $p: Y \to \cX$ factors as $Y \xrightarrow{\Gamma_p} Y \times_X \cX \xrightarrow{\mathrm{pr}_2} \cX$. The graph $\Gamma_p$ is a base change of the diagonal $\Delta_\pi$ of $\pi$, which is again a base change of the diagonal of $\cX$, so is affine.  The second projection $\mathrm{pr}_2$ is a base change of $\pi \circ p: Y \to X$, which is affine as both $Y$ and $X$ are affine. Thus $p$ is affine. Then the morphism $p_{\cU}: Y \times_{\cX} \cU \to \cU$ is affine. Note that $\pi|_{\cU}: \cU \to X$ is qcqs. Consequently, the composition $Y \times_{\cX} \cU \xrightarrow{p_{\cU}} \cU
\xrightarrow{\pi|_{\cU}} X$ is adequately affine and qcqs, and hence affine \cite[Thm. 4.3.1]{AlperAdequate}. Since $X$ is affine, $Y \times_{\cX} \cU$ is also affine. Then $j_Y: Y \times_{\cX} \cU \to Y$ is a morphism between affine schemes, hence affine.
\end{proof}
\Cref{lem:affine-open} separates the two parts in the definition of adequate moduli spaces. Once the open immersion is affine, adequate affineness is automatic and only the condition on regular functions needs to be checked. We record this as a corollary.
\begin{corollary}\label{cor:affine-test}
Let $\cX$ be an algebraic stack with affine diagonal, and let $\pi: \cX \to X$ be an adequate moduli space. Then an open immersion $j: \cU \hookrightarrow \cX$ preserves
$\pi$ if and only if $j$ is affine and the canonical map
$\cO_X \to (\pi \circ j)_*\cO_{\cU}$ is an isomorphism.
\end{corollary}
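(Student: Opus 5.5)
The forward implication is immediate. If $j$ preserves $\pi$, then \Cref{lem:affine-open} shows that $j$ is affine. By the definition of an adequate moduli space, $(\pi|_{\cU})_*\cO_{\cU}=(\pi\circ j)_*\cO_{\cU}$ is identified with $\cO_X$ via the canonical map.

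For the converse, assume $j$ is affine and $\cO_X \to (\pi\circ j)_*\cO_{\cU}$ is an isomorphism. It remains to show that $\pi\circ j:\cU\to X$ is adequately affine; it is already quasi-compact and quasi-separated. I would argue as follows.
\begin{enumerate}
\item An affine morphism is adequately affine. Indeed, $j_*$ is exact on quasi-coherent sheaves, so $j$ is cohomologically affine. In particular, $j_*$ sends surjections of quasi-coherent algebras to surjections, and surjections are trivially universally adequate. Hence $j$ is adequately affine by the characterization in \cite{AlperAdequate}.
\item A composition of adequately affine morphisms is adequately affine \cite[Prop. 4.2.1]{AlperAdequate}. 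Since $\pi$ is adequately affine, so is $\pi\circ j$.
\end{enumerate}
Together with the hypothesis on $\cO_X$, this says exactly that $\pi\circ j$ is an adequate moduli space, i.e. $\cU$ preserves $\pi$.

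Alternatively, one can argue directly. Let $\cA\twoheadrightarrow\cB$ be a surjection of quasi-coherent $\cO_{\cU}$-algebras. Since $j$ is affine, $j_*\cA\to j_*\cB$ is a surjection of quasi-coherent $\cO_{\cX}$-algebras. Applying the adequate affineness of $\pi$ then shows that $\pi_*j_*\cA\to\pi_*j_*\cB$ is universally adequate.

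There is no real obstacle here. The only point requiring care is quoting the correct statement from \cite{AlperAdequate} that affine morphisms are adequately affine and that adequate affineness is stable under composition; the direct surjectivity argument above avoids even that dependence.
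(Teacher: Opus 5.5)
Your proposal is correct and follows the paper's own reasoning. The forward direction is \Cref{lem:affine-open} plus the definition, and the converse uses that affine morphisms are adequately affine and that adequate affineness is stable under composition \cite[Prop. 4.2.1]{AlperAdequate}, exactly as the paper invokes in the proof of \Cref{thm:adequate-boundary}; your direct argument via exactness of $j_*$ simply makes that step explicit.
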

Next we translate affineness into a restriction on the boundary. On an irreducible or normal stack it rules out boundary components of codimension larger than $1$.
\begin{lemma}[Purity of the boundary]\label{lem:boundary-purity}
Let $\cX$ be a locally noetherian algebraic stack with affine diagonal, and let $\pi: \cX \to X$ be an adequate moduli space. Let $j: \cU \hookrightarrow \cX$ be a non-trivial open immersion preserving $\pi$. If $j$ is dense (e.g., if $\cX$ is irreducible or normal), then the boundary $\cX \setminus \cU$ is pure of codimension $1$. If $\cX$ is locally factorial, its reduced boundary $(\cX \setminus \cU)_\red$ is an effective Cartier divisor on $\cX$.
\end{lemma}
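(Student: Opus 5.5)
By \Cref{lem:affine-open}, $j$ is affine. The plan is to reduce to a statement about complements of affine opens in a noetherian scheme, and then apply the classical purity theorem there. The complement of an affine open in a noetherian separated scheme need not be pure of codimension one in general. However, the complement of an affine dense open in a noetherian scheme has all irreducible components of codimension $\le 1$ (EGA IV 21.12.7, or Stacks Project Tag 0BCU for normal/noetherian cases). So the key is to pass to a smooth atlas.

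Choose a smooth surjective atlas $p: Y \to \cX$ with $Y$ a scheme; since $\cX$ is locally noetherian we may take $Y$ locally noetherian, and we may work on affine opens $V \subseteq Y$. The base change $j_V: V\times_\cX \cU \to V$ is an affine open immersion, so $U_V := V\times_\cX\cU$ is an affine open of the noetherian affine scheme $V$. Density of $j$ pulls back to density of $U_V$ in $V$, because $p$ is smooth, hence open, so the preimage of a dense open is dense. The classical result then says every irreducible component of $V\setminus U_V$ has codimension exactly $1$ in $V$: codimension $\ge 1$ by density, and $\le 1$ by the affine complement theorem, which rests on Krull's Hauptidealsatz applied at the generic point of a component after localizing, so that the punctured spectrum is affine, forcing $\dim \le 1$. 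Codimension of closed subsets is smooth local on stacks, since smooth morphisms preserve codimension of points under generization. It follows that $\cX\setminus\cU$ is pure of codimension $1$. Non-triviality of $j$ guarantees the boundary is nonempty, so the statement is not vacuous.

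For the Cartier statement, take $\cD = (\cX\setminus\cU)_\red$, which is a reduced closed substack pure of codimension one. If $\cX$ is locally factorial, meaning the local rings of a smooth atlas are UFDs, then on $V$ each height-one prime is principal. Hence the ideal of the reduced pure codimension-one closed subscheme $D_V$ is locally principal, generated by a product of the prime generators, and is generated by a nonzerodivisor in the domain local rings. The ideal sheaf of $\cD$ pulls back to that of $D_V$, since reducedness is preserved under smooth base change. Being an invertible ideal descends along smooth covers, so $\cD$ is an effective Cartier divisor.

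I expect the main obstacle to be the second paragraph's classical step, that the complement of a dense affine open in a noetherian scheme has codimension $\le 1$. This requires no normality: one localizes at the generic point $\xi$ of a component $Z$ of codimension $\ge 2$. Then $\Spec \cO_{V,\xi}$ minus the closed point is affine, because it equals $U_V\times_V \Spec\cO_{V,\xi}$ and affineness is stable under base change. But for a noetherian local ring of dimension $\ge 2$ the punctured spectrum is never affine, since otherwise $H^i_{\mathfrak m}$ would vanish for $i\ge2$, contradicting Grothendieck's nonvanishing theorem. This settles the step.
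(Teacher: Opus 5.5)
Your argument for the two main assertions is correct and follows the paper's route. You show $j$ is affine by \Cref{lem:affine-open}, note that on a noetherian affine chart of a smooth atlas it becomes a dense affine open immersion, and apply the complement-of-affine-open theorem (Tag 0BCU) to get codimension exactly $1$. You additionally reprove that theorem via the non-vanishing of $H^{\dim R}_{\mathfrak m}(R)$. Codimension and the Cartier property are then checked smooth locally, as in the paper.

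What you omit is the parenthetical claim in the statement that $j$ is automatically dense when $\cX$ is irreducible or normal. In the normal case this needs an argument that uses the hypothesis that $\cU$ preserves $\pi$. The paper supplies both cases:
\begin{itemize}
\item If $\cX$ is irreducible, then $\cU\neq\varnothing$ because $\pi|_{\cU}$ is surjective, so $\cU$ is dense.
\item If $\cX$ is normal, its irreducible components are open and closed. Moreover $\Gamma(\cX,\cO_{\cX})\to\Gamma(\cU,\cO_{\cU})$ is an isomorphism, since both sides are $\Gamma(X,\cO_X)$. So $\cU$ cannot miss a component $\cX^\circ$: the non-zero idempotent that is $1$ on $\cX^\circ$ and $0$ elsewhere would restrict to $0$ on $\cU$. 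Hence $\cU$ meets every open irreducible component and is dense.
\end{itemize}
With this added, your proof covers the full statement.
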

\begin{proof}
We first verify the assertions on density. If $\cX$ is irreducible, then $\cU \neq \varnothing$ as $\pi|_{\cU}$ is surjective, and hence is dense. If $\cX$ is normal, its irreducible components are open and closed. Moreover, preservation of $\pi$ gives an isomorphism $\Gamma(\cX,\cO_{\cX}) \xrightarrow{\sim} \Gamma(\cU,\cO_{\cU})$ since both sides are canonically identified with $\Gamma(X,\cO_X)$. If $\cU$ missed an irreducible component $\cX^\circ$ of $\cX$, the non-zero idempotent equal to $1$ on $\cX^\circ$ and $0$ on its complement would restrict to $0$ on $\cU$, a contradiction. Thus $\cU$ meets every irreducible component of $\cX$ and is dense.

The open immersion $j$ is affine by \Cref{lem:affine-open}.  On a locally noetherian atlas, it becomes a dense affine open immersion of schemes, so each irreducible component of the complement has codimension $1$ \cite[\href{https://stacks.math.columbia.edu/tag/0BCU}{Tag 0BCU}]{stacks-project}. Codimension can be checked on an atlas, so the same holds for $\cX \setminus \cU$. If $\cX$ is locally factorial, the Weil divisor $(\cX \setminus \cU)_\red$ is Cartier.
\end{proof}
By \Cref{lem:boundary-purity}, for a locally reductive stack, reduced removable closed substacks are necessarily effective Cartier divisors. It remains to determine when an effective Cartier divisor is removable. By definition, a closed substack is removable if and only if its reduction is. Hence it suffices to determine when a \emph{reduced} effective Cartier divisor is removable. The following theorem characterizes this.
\begin{theorem}\label{thm:adequate-boundary}
Let $\pi: \cX \to X$ be an adequate moduli space. Let $i: \cD \hookrightarrow \cX$ be a reduced effective Cartier divisor on $\cX$ with normal bundle $\cN:=\cN_{\cD/\cX}=\cO_{\cX}(\cD)|_\cD$. Then the following are equivalent:
\begin{enumerate}
\item 
the closed substack $\cD \subseteq \cX$ is removable for $\pi$.
\item 
the canonical map $\cO_X \to \pi_*\cO_{\cX}(n\cD)$ is an isomorphism for every integer $n > 0$.
\item
the condition $(\pi \circ i)_*\cN^{\otimes n}=0$ holds for every integer $n>0$.
\end{enumerate}
\end{theorem}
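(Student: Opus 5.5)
The plan is to pass through the $\cO_{\cX}$-algebra $j_*\cO_{\cU}$, where $j:\cU=\cX\setminus\cD\hookrightarrow\cX$. Since $\cD$ is an effective Cartier divisor, $j$ is affine and
\[
j_*\cO_{\cU}=\varinjlim_{n}\cO_{\cX}(n\cD),
\]
with transition maps given by the canonical section $s_\cD$. Because $\pi_*$ commutes with filtered colimits of quasi-coherent sheaves ($\pi$ is qcqs), we get
\[
(\pi\circ j)_*\cO_{\cU}=\varinjlim_n \pi_*\cO_{\cX}(n\cD).
\]
Since $j$ is affine, \Cref{cor:affine-test} says that (i) is equivalent to $\cO_X\to\varinjlim_n\pi_*\cO_{\cX}(n\cD)$ being an isomorphism. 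If the affine-diagonal hypothesis of that corollary is unavailable, I would argue directly: $\pi\circ j$ is adequately affine because $j$ is affine, and then only the condition on functions remains.

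\emph{(ii)$\Rightarrow$(i)} is then immediate. All transition maps $\pi_*\cO_{\cX}(n\cD)\to\pi_*\cO_{\cX}((n+1)\cD)$ are compatible with the isomorphisms from $\cO_X$, so they are isomorphisms and the colimit is $\cO_X$.

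\emph{(ii)$\Leftrightarrow$(iii).} Use the exact sequences
\[
0\to\cO_{\cX}((n-1)\cD)\to\cO_{\cX}(n\cD)\to i_*\cN^{\otimes n}\to 0 .
\]
Apply $\pi_*$, which is left exact. If (iii) holds, induction on $n$ gives that each $\pi_*\cO((n-1)\cD)\to\pi_*\cO(n\cD)$ is an isomorphism, hence (ii). For the converse, the point is that $\pi_*$ is not exact, only adequately exact. So I will not argue via cokernels but show (i)$\Rightarrow$(iii) directly, and note (ii)$\Rightarrow$(i) above. The chain is then (iii)$\Rightarrow$(ii)$\Rightarrow$(i)$\Rightarrow$(iii).

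\emph{(i)$\Rightarrow$(iii)} is the main obstacle. Work étale locally on $X$, assumed affine; the vanishing is a local statement on $X$. Let $t$ be a section of $\cN^{\otimes n}$ over $\cD$, $n>0$. Form the graded $\cO_\cD$-algebra $\cB=\bigoplus_{m\ge0}\cN^{\otimes m}$, which is $\gr$ of the filtered algebra $\cA=\bigoplus_m\cO_{\cX}(m\cD)$ (the Rees-type algebra). Since $\cU$ preserves $\pi$, (ii) holds, so $\Gamma(\cA)$ is $\Gamma(\cO_X)[s]$ with $s$ in degree $1$ the section $s_\cD$. Apply adequate affineness to the surjection of algebras
\[
\cA\twoheadrightarrow\cA/s\cA=\bigoplus_m i_*\cN^{\otimes m},
\]
whose degree-$m$ pieces are the sequence above. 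Adequacy gives that for our $t$ some power $t^N$ (the characteristic-$p$ version of adequacy) lifts to a section of $\cO_{\cX}(nN\cD)$. Such a section lies in $\Gamma(\cO_X)s^{nN}$, whose image in $\cN^{\otimes nN}$ is zero because $s$ restricts to $0$ on $\cD$. Hence $t^N=0$ in $\Gamma(\cD,\cN^{\otimes nN})$. Since $\cD$ is reduced and $\cN$ is a line bundle, a local trivialization shows $t$ is locally a nilpotent function, hence $t=0$. Thus $\Gamma(\cD_{X'},\cN^{\otimes n})=0$ after every étale base change $X'\to X$, which gives $(\pi\circ i)_*\cN^{\otimes n}=0$.

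The delicate points are that adequacy is applied to graded algebras, with surjectivity of the adequate homomorphism only up to powers. The lift must land in the correct degree; this follows by taking homogeneous components, as everything is $\Gm$-graded.
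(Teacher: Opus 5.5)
Your proposal is correct and is essentially the paper's proof. It uses the same colimit description $(\pi\circ j)_*\cO_{\cU}=\varinjlim_n\pi_*\cO_{\cX}(n\cD)$ for (i)$\Leftrightarrow$(ii), the same induction on the left-exact sequences for (iii)$\Rightarrow$(ii), and the same Rees-algebra argument for the hard direction: adequacy applied to $\bigoplus_n\cO_{\cX}(n\cD)\twoheadrightarrow\bigoplus_n\cO_{\cX}(n\cD)/(s)$, then taking homogeneous components, then reducedness.

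There is one step you use without justifying, namely (i)$\Rightarrow$(ii) inside your (i)$\Rightarrow$(iii). As in the paper, it holds because the transition maps $\pi_*\cO_{\cX}(n\cD)\hookrightarrow\pi_*\cO_{\cX}((n+1)\cD)$ are injective, by left exactness and since $s$ is a non-zero-divisor. So each $\pi_*\cO_{\cX}(n\cD)$ sits between $\cO_X$ and the colimit, and the composite $\cO_X\to\pi_*\cO_{\cX}(n\cD)\hookrightarrow\varinjlim$ being an isomorphism forces the first map to be one. Also, the degree-$0$ part of $\cA/s\cA$ is $\cO_{\cX}$, not $i_*\cO_{\cD}$; this is harmless since only positive degrees are used.
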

Note that vanishing of positive powers of the boundary normal bundle also appears in the criterion for extended weighted blow-ups of \cite[Prop. A.1]{DLIStableMaps}.
\begin{proof}
Let $j: \cU:=\cX \setminus \cD \hookrightarrow \cX$ be the inclusion of the open complement of $\cD$. First we claim that $j$ is affine. Indeed, affineness can be checked smooth local on the target $\cX$, so this reduces to the scheme case \cite[\href{https://stacks.math.columbia.edu/tag/07ZU}{Tag 07ZU}]{stacks-project}, as smooth pullback preserves effective Cartier divisors. 

(i) $\Leftrightarrow$ (ii). Since $j$ is affine, the composition $\pi \circ j$ is adequately affine \cite[Prop. 4.2.1 (2) and (1)]{AlperAdequate}. Therefore, the restriction $\pi|_{\cU}=\pi \circ j: \cU \to X$ is an adequate moduli space if and only if the canonical map $\cO_X \to (\pi \circ j)_*\cO_{\cU}$ is an isomorphism. To compute $(\pi \circ j)_*\cO_{\cU}$, we note that
\[
j_* \cO_{\cU} = \varinjlim_{n \geq 0} \cO_{\cX}(n\cD).
\]
Indeed, this can be checked smooth locally on the target $\cX$, on an affine chart $\Spec A$ where $\cD$ is defined by a single non-zero-divisor $f$. On such a chart,
\[
A_f = \varinjlim \left( A \xrightarrow{\cdot f} A \xrightarrow{\cdot f} A \to \cdots \right) = \bigcup_{n \geq 0} f^{-n}A,
\]
as every element of $A_f$ belongs to $f^{-n}A$ for some integer $n \geq 0$. Pushforward along a qcqs morphism commutes with filtered colimits of quasi-coherent modules, so
\[
(\pi \circ j)_*\cO_{\cU} = \pi_* \left(j_* \cO_{\cU} \right) = \pi_*\varinjlim_{n \geq 0} \cO_{\cX}(n\cD) = \varinjlim_{n \geq 0} \pi_*\cO_{\cX}(n\cD) = \bigcup_{n \geq 0} \pi_*\cO_{\cX}(n\cD)
\]
The last equality holds because the transition maps $\pi_*\cO_{\cX}(n\cD) \hookrightarrow \pi_*\cO_{\cX}((n+1)\cD)$ are injective. In particular, every $\pi_*\cO_{\cX}(n\cD)$ embeds into this colimit. For every integer $n \geq 0$, the canonical map $\cO_X \to (\pi \circ j)_*\cO_{\cU}$ factors as
\[
\cO_X \xrightarrow{\sim} \pi_*\cO_{\cX} \hookrightarrow \pi_*\cO_{\cX}(n\cD) \hookrightarrow \bigcup_{n \geq 0} \pi_*\cO_{\cX}(n\cD) = (\pi \circ j)_*\cO_{\cU}.
\]
Consequently, if $\cO_X \to (\pi \circ j)_*\cO_{\cU}$ is an isomorphism, then every $\cO_X \to \pi_*\cO_{\cX}(n\cD)$ is an isomorphism. Conversely, if every canonical map $\cO_X \to \pi_*\cO_{\cX}(n\cD)$ is an isomorphism, their colimit is also $\cO_X$. This proves the equivalence (i) $\Leftrightarrow$ (ii).

(iii) $\Rightarrow$ (ii). For every integer $n>0$ there is a short exact sequence of $\cO_\cX$-modules
\[
0 \to \cO_\cX((n-1)\cD) \xrightarrow{\cdot s} \cO_\cX(n\cD) \to i_*\cN^{\otimes n} \to 0,
\]
where $s$ is the canonical section of $\cO_\cX(\cD)$. Applying $\pi_*$ gives
\[
0 \to \pi_*\cO_\cX((n-1)\cD) \to \pi_*\cO_\cX(n\cD) \to (\pi \circ i)_*\cN^{\otimes n}.
\]
Under (iii), the first map is an isomorphism. Induction, starting from $\cO_X \xrightarrow{\sim} \pi_*\cO_\cX$, proves (ii).

(ii) $\Rightarrow$ (iii). The idea is that adequateness will imply that every local section of
$(\pi \circ i)_*\cN^{\otimes n}$ has a positive tensor power equal to zero. Reducedness will then force the section itself to vanish. Consider the graded quasi-coherent $\cO_{\cX}$-algebras
\[
\cB:=\bigoplus_{n \geq 0} \cO_\cX(n\cD) \text{ and } \cQ:=\cB/(s),
\]
where $s$ is regarded as a degree $1$ element. The degree $0$ part of the ideal $(s)$ is zero and degree $n>0$
part is $s\cO_\cX((n-1)\cD)$. Consequently,
\[
\cQ_0 = \cO_{\cX} \text{ and } \cQ_n = \cO_\cX(n\cD)/s\cO_\cX((n-1)\cD) \text{ for } n>0.
\]
The short exact sequence above gives
$\cQ_n \simeq i_*\cN^{\otimes n}$ for every $n>0$, and these identifications are compatible with multiplication in positive degrees. In particular, the power $h^r$ of a homogeneous section of degree $n>0$ corresponds to its tensor power $h^{\otimes r}$ as a section of $\cN^{\otimes nr}$.

By (ii), together with $\cO_X \xrightarrow{\sim} \pi_*\cO_\cX$ for $n=0$, the canonical maps $\cO_X \to \pi_*\cO_\cX(n\cD)$ sending $a \mapsto (\pi^*a)s^n$ are isomorphisms for all $n \geq 0$. Since $\pi$ is qcqs, its pushforward on quasi-coherent modules commutes with direct sums. This gives an isomorphism $\cO_X[s] \xrightarrow{\sim} \pi_*\cB$ of graded $\cO_X$-algebras. In particular, every degree $n$ local section of $\pi_*\cO_\cX(n\cD)$ is a multiple of $s^n$.

Let us fix an integer $n>0$. For any affine \'{e}tale chart $W \to X$ and section $h$ of $(\pi \circ i)_*\cN^{\otimes n}$ over $W$, under the identification $(\pi \circ i)_*\cN^{\otimes n} \simeq \pi_*\cQ_n$, one may view $h$ as a section of $\pi_*\cQ$ over $W$. Since $\pi$ is adequately affine, for the quotient of quasi-coherent $\cO_\cX$-algebras $\cB \twoheadrightarrow \cQ$ and the section $h$ of $\pi_*\cQ$, after an \'{e}tale refinement of $W$ if necessary, some positive power $h^r$ lifts to $\pi_*\cB$. The quotient map preserves the grading, and $h^r$ is homogeneous of degree $nr$. Taking the degree $nr$ component gives a lift of $h^r$ in $\pi_*\cO_\cX(nr\cD)$. But every such lift is a multiple of $s^{nr}$, whose image in $\cQ$ is zero. This shows that $h^r=0$, hence $h^{\otimes r}=0$.

Since $W \to X$ is \'{e}tale and $\cD$ is reduced, $\cD \times_X W$ is reduced. On a reduced stack, a section of a line bundle whose positive tensor power is zero is itself zero, as is checked by trivializing the line bundle on a reduced atlas. Thus $h=0$ \'{e}tale locally on $W$, and hence on $W$.
\end{proof}

\begin{example}\label{ex:adequate-not-good}
Let $k$ be an algebraically closed field of characteristic $p>0$ and let
$\Gm \times \SL_2$ act on $\bbA^1$ by $(a,h).x=ax$. There is an adequate moduli space $[\bbA^1/(\Gm \times \SL_2)] \to \Spec k$. Removing the zero divisor gives $\mathrm{BSL}_2$ with the same
adequate moduli space. This follows directly from computing the invariant rings, or from \Cref{thm:adequate-boundary}: each positive power
of the normal bundle has a non-zero $\Gm$-character.
\end{example}
\begin{corollary}\label{cor:removable-restriction}
Let $\pi: \cX \to X$ be an adequate moduli space. If $\cD' \subseteq \cD$ are effective Cartier divisors on $\cX$ and $\cD$ is removable for $\pi$, then $\cD'$ is removable for $\pi$.
\end{corollary}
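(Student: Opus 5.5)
The plan is to use criterion (ii) of \Cref{thm:adequate-boundary}, or more directly the description of $(\pi\circ j)_*\cO_{\cU}$ given in its proof. I work with the open complements $\cU:=\cX\setminus\cD \subseteq \cU':=\cX\setminus\cD'$ and their inclusions $j:\cU\hookrightarrow\cX$, $j':\cU'\hookrightarrow\cX$. Both are affine open immersions, since they are complements of effective Cartier divisors; this is the first step in the proof of \Cref{thm:adequate-boundary} and needs no reducedness. In particular $\pi\circ j'$ is adequately affine by \cite[Prop. 4.2.1]{AlperAdequate}. So by the same argument as in (i)$\Leftrightarrow$(ii), or by \Cref{cor:affine-test} when the diagonal is affine, it suffices to show that $\cO_X\to(\pi\circ j')_*\cO_{\cU'}$ is an isomorphism.

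For this I use the factorization
\[
\cO_X \to (\pi\circ j')_*\cO_{\cU'} \to (\pi\circ j)_*\cO_{\cU},
\]
where the second map is restriction of functions from $\cU'$ to the smaller open $\cU$. The composite is an isomorphism because $\cD$ is removable.

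The remaining point is that the restriction map is injective. Via the colimit descriptions $j'_*\cO_{\cU'}=\varinjlim\cO_\cX(n\cD')$ and $j_*\cO_{\cU}=\varinjlim\cO_\cX(n\cD)$, it is induced by the inclusions $\cO_\cX(n\cD')\hookrightarrow\cO_\cX(n\cD)$. These exist because $\cD'\subseteq\cD$ as effective Cartier divisors means $\cI_{\cD}\subseteq\cI_{\cD'}$. Smooth locally, on $\Spec A$, we have $\cD=V(f)$ and $\cD'=V(g)$ with $f=gh$ for non-zero-divisors $g,h$. The map in question is then $A_g\to A_f=A_{gh}$, which is injective since $h$ is a non-zero-divisor. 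So $j'_*\cO_{\cU'}\hookrightarrow j_*\cO_{\cU}$ is injective, and $\pi_*$, being left exact, keeps it injective.

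Now an injective map whose precomposition with $\cO_X\to(\pi\circ j')_*\cO_{\cU'}$ is an isomorphism forces both maps to be isomorphisms. Hence $\cU'$ preserves $\pi$.

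The main subtlety is the non-reduced case and the injectivity just discussed. The argument deliberately avoids criterion (iii), so reducedness is never used. Note also that removability of $\cD$ depends only on $\cD_\red$, and the proof above works for the given Cartier divisors directly.
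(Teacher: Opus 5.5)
Your proof is correct and is essentially the paper's argument. The paper sandwiches $\pi_*\cO_\cX(n\cD')$ between $\cO_X$ and $\pi_*\cO_\cX(n\cD)$ for each $n$ and then invokes (i)$\Leftrightarrow$(ii) of \Cref{thm:adequate-boundary}; you run the same sandwich directly on the colimits, via $(\pi\circ j')_*\cO_{\cU'} \hookrightarrow (\pi\circ j)_*\cO_{\cU}$. Your remark that only the reducedness-free equivalence (i)$\Leftrightarrow$(ii) is needed is a useful clarification, since the paper applies that theorem, which is stated for reduced divisors, to possibly non-reduced $\cD,\cD'$ without comment.
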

\begin{proof}
For every integer $n > 0$, we have canonical inclusions $\cO_X \hookrightarrow \pi_*\cO_\cX(n\cD') \hookrightarrow \pi_*\cO_\cX(n\cD)$. The composition is an isomorphism by \Cref{thm:adequate-boundary}, so the first map is also an isomorphism. Then applying \Cref{thm:adequate-boundary} to $\cD'$ proves the result.
\end{proof}
The criterion in \Cref{thm:adequate-boundary} also yields a useful
obstruction in terms of descent: if the complement of an effective Cartier divisor $\cD$ on $\cX$ preserves $\pi$, then no positive tensor power of $\cO_\cX(\cD)$ descends to the moduli space. The key point is that the corresponding power of the canonical section would descend as well.
\begin{proposition}[Descent obstruction]\label{prop:descent-obstruction}
Let $\cX$ be an algebraic stack admitting an adequate moduli space $\pi: \cX \to X$. Let $\varnothing \neq \cD \subseteq \cX$ be an effective Cartier divisor. If $\cO_\cX(n\cD) \simeq \pi^*A$ for some $n>0$ and $A \in \Pic(X)$, then $\cD$ is not removable for $\pi$.
\end{proposition}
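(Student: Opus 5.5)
We argue by contradiction: assume $\cD$ is removable, so $\cU:=\cX\setminus\cD$ preserves $\pi$. Let $s$ be the canonical section of $\cO_\cX(\cD)$; then $s^n$ is a global section of $\cO_\cX(n\cD)\simeq\pi^*A$ whose vanishing locus is exactly $\cD$ set-theoretically. The idea is that this section descends to $X$, and its zero locus on $X$ pulls back to $\cD$, which is incompatible with $\pi|_{\cU}$ being surjective onto $X$.

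First we descend the section. By the projection formula for adequate moduli spaces, $\pi_*\pi^*A\simeq A\otimes\pi_*\cO_\cX\simeq A$, since $A$ is a line bundle and the question is local on $X$. So there is a unique $t\in\Gamma(X,A)$ with $\pi^*t=s^n$ under the fixed isomorphism. Let $Z:=V(t)\subseteq X$. Since $\pi^*t=s^n$, we get $\pi^{-1}(Z)=V(s^n)$, whose underlying closed subset is $|\cD|$.

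Next we use surjectivity. Because $\cD\neq\varnothing$, its image $\pi(\cD)$ is non-empty, so $Z\neq\varnothing$. Pick a point $x\in Z$. Since $\cU$ preserves $\pi$, the restriction $\pi|_{\cU}:\cU\to X$ is an adequate moduli space, hence surjective. So some point $u\in\cU$ satisfies $\pi(u)=x\in Z$. Then $u\in\pi^{-1}(Z)$, whose underlying set is $|\cD|$, contradicting $u\in\cU=\cX\setminus\cD$. Hence $\cD$ is not removable.

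An alternative route uses \Cref{thm:adequate-boundary}(ii) instead of surjectivity, but that theorem is stated for reduced $\cD$, so one would first pass to $\cD_\red$ via \Cref{cor:removable-restriction}. The direct surjectivity argument avoids this. The only point needing care is the descent $\pi_*\pi^*A\simeq A$. This holds because the statement is étale local on $X$, where $A$ is trivial and it reduces to $\pi_*\cO_\cX\simeq\cO_X$. I expect no real obstacle.
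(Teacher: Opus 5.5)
Your proposal is correct and follows essentially the same route as the paper. You descend $s^{\otimes n}$ along the projection-formula isomorphism $A \simeq \pi_*\pi^*A$ to a section $t$ of $A$, observe that $\cX\times_X Z(t)$ has support $|\cD|\neq\varnothing$, and conclude that $\pi|_{\cU}$ cannot hit $Z(t)$; the only difference is that you phrase this as a contradiction, whereas the paper directly shows that $\pi|_{\cU}$ factors through $X\setminus Z(t)\subsetneq X$.
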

\begin{proof}
Let $\cU:=\cX\setminus\cD$, and let
$s\in\Gamma(\cX,\cO_\cX(\cD))$ be the canonical section defining
$\cD$. We conclude by showing that $\pi|_{\cU}: \cU \to X$ is not surjective. Fix an isomorphism
$\cO_\cX(n\cD) \simeq \pi^*A$ and use it to regard
$s^{\otimes n}$ as a section of $\pi^*A$. Since $\pi$ is an adequate moduli space, the projection formula for $A$ gives canonical isomorphisms $A \xrightarrow{\sim} A \otimes_{\cO_X} \pi_*\cO_\cX \xrightarrow{\sim} \pi_*\pi^*A$. So there is a unique section $t \in \Gamma(X,A)$
whose pullback is $s^{\otimes n}$. Let $Z(t) \subseteq X$ be its zero locus. Since zero loci of sections of line bundles commute with base change, we have an equality of closed substacks $\cX \times_X Z(t) = Z(\pi^*t) = Z(s^{\otimes n}) = n\cD$. As $|n\cD|=|\cD| \neq \varnothing$, this implies that
$Z(t) \neq \varnothing$. Taking complements gives $\cU=\cX \setminus \cD = \cX \times_X (X \setminus Z(t))$. Thus $\pi|_\cU$ factors through the open subspace
$X \setminus Z(t) \subsetneq X$ and is not surjective.
\end{proof}
Putting \Cref{lem:boundary-purity} and \Cref{thm:adequate-boundary} together, we obtain a bijection between certain Cartier divisors and preserving opens for locally factorial stacks.
\begin{theorem}\label{thm:divisorial-classification}
Let $\cX$ be a locally factorial algebraic stack with affine diagonal, and let $\pi: \cX \to X$ be an adequate moduli space. Taking complements gives a bijection from reduced effective Cartier divisors $\cD$ on $\cX$ satisfying
\[
(\pi \circ i)_*\cO_{\cD}(n\cD)=0 \text{ for every integer } n>0
\]
to $\Op(\pi)$, where $i: \cD \hookrightarrow \cX$ is the closed immersion.
\end{theorem}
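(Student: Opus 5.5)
The plan is to define the map $\cD \mapsto \cX \setminus \cD$ and check three things: it lands in $\Op(\pi)$, it is injective, and it is surjective.

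\emph{Well-definedness.} Let $\cD$ be a reduced effective Cartier divisor with $(\pi\circ i)_*\cO_{\cD}(n\cD)=0$ for all $n>0$. Here $\cO_{\cD}(n\cD)=\cO_{\cX}(n\cD)|_{\cD}=\cN^{\otimes n}$. So \Cref{thm:adequate-boundary} (iii)$\Rightarrow$(i) shows that $\cD$ is removable, i.e.\ $\cX\setminus\cD\in\Op(\pi)$. The empty divisor gives $\cX$ itself, and the condition holds vacuously.

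\emph{Injectivity.} A reduced closed substack is determined by its underlying closed subset. Two reduced divisors with the same complement therefore have the same support, and so they coincide.

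\emph{Surjectivity.} Let $\cU\in\Op(\pi)$. If $\cU=\cX$, take $\cD=\varnothing$. Otherwise we need the reduced boundary $\cD:=(\cX\setminus\cU)_\red$ to be an effective Cartier divisor. \Cref{lem:boundary-purity} gives exactly this once its hypotheses are met: $\cX$ locally noetherian with affine diagonal, the open immersion dense, and $\cX$ locally factorial. This step is where the only real subtlety lies, namely the density and noetherian hypotheses.

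\begin{itemize}
\item Local factoriality presupposes locally noetherian, since factoriality of local rings is only considered for noetherian local rings on an atlas.
\item Locally factorial implies normal, because UFDs are integrally closed. So \Cref{lem:boundary-purity} supplies density via its normal case.
\end{itemize}

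With these in place, \Cref{lem:boundary-purity} shows that $\cD$ is a reduced effective Cartier divisor with $\cX\setminus\cD=\cU$. Then \Cref{thm:adequate-boundary} (i)$\Rightarrow$(iii), applied to this reduced $\cD$, gives $(\pi\circ i)_*\cN^{\otimes n}=0$ for all $n>0$. Hence $\cD$ lies in the source set and maps to $\cU$, which completes the bijection.
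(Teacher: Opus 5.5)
Your proposal is correct and matches the paper's proof: \Cref{thm:adequate-boundary} gives both that the complement preserves $\pi$ and the vanishing condition for the boundary, reducedness gives injectivity, and \Cref{lem:boundary-purity} makes $(\cX\setminus\cU)_\red$ Cartier for surjectivity. Your extra remarks (the empty divisor giving $\cX$, local factoriality implying locally noetherian and normal so the lemma's density hypothesis holds) only spell out what the paper leaves implicit.
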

\begin{proof}
\Cref{thm:adequate-boundary} shows that taking complements sends every reduced effective Cartier divisor satisfying the vanishing condition to an open substack preserving $\pi$. This assignment is injective because a reduced closed substack is determined by its complement. Conversely, let $\cU \subsetneq \cX$ be an open substack preserving $\pi$. Then $\cD=(\cX \setminus \cU)_{\red}$ is an effective Cartier divisor by \Cref{lem:boundary-purity}. Applying \Cref{thm:adequate-boundary} to $\cD$ gives
$(\pi \circ i)_*\cO_{\cD}(n\cD)=0$ for every $n>0$. This shows the surjectivity.
\end{proof}
The preceding correspondence has a simplicial description: the faces record which prime divisors can be removed simultaneously. For an adequate moduli space $\pi: \cX \to X$, let $\PD(\pi)$ be the set of removable reduced prime divisors on $\cX$.
\begin{corollary}\label{cor:simplicial-correspondence}
Let $\cX$ be a noetherian locally factorial algebraic stack with affine
diagonal, and let $\pi: \cX \to X$ be an adequate moduli space. For a finite subset $\sigma \subseteq \PD(\pi)$, write
\[
\cD_\sigma:=\sum_{\cD \in \sigma} \cD \text{ and } i_\sigma: \cD_\sigma \hookrightarrow \cX.
\]
Let
\[
\cK(\pi):=\left\{ \sigma \subseteq \PD(\pi) \text{ finite}: (\pi \circ i_\sigma)_*\cO_{\cD_\sigma}(n\cD_\sigma)=0 \text{ for every } n>0\right\}.
\]
Then $\cK(\pi)$ is an abstract simplicial complex with vertex set $\PD(\pi)$, and a finite collection of vertices forms a face precisely when the complement of their union preserves $\pi$. The assignment $\sigma \mapsto \cX \setminus \cD_\sigma$ is an order-reversing bijection from its faces to $\Op(\pi)$.
\end{corollary}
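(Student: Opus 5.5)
The plan is to combine \Cref{thm:divisorial-classification} with \Cref{cor:removable-restriction}, and to use unique factorization of reduced divisors on a locally factorial noetherian stack. First I check that every face has the right vertices. For each $\sigma \in \cK(\pi)$, the divisor $\cD_\sigma$ is a sum of distinct prime divisors. It is therefore a reduced effective Cartier divisor: each prime Weil divisor is Cartier by local factoriality, and a sum of distinct primes is reduced, which can be checked on a smooth atlas. By \Cref{thm:adequate-boundary}, $\sigma \in \cK(\pi)$ holds exactly when $\cD_\sigma$ is removable, that is, when $\cX \setminus \cD_\sigma = \cX \setminus \bigcup_{\cD\in\sigma}\cD$ preserves $\pi$. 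This is the asserted description of faces.

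Next I verify the simplicial complex axioms. Singletons $\{\cD\}$ with $\cD \in \PD(\pi)$ are faces by the definition of $\PD(\pi)$, and the empty set is a face trivially, since $\cD_\varnothing = 0$ and its complement is $\cX$. For closure under subsets, take $\tau \subseteq \sigma$ with $\sigma$ a face. Then $\cD_\tau \subseteq \cD_\sigma$ are effective Cartier divisors with $\cD_\sigma$ removable. \Cref{cor:removable-restriction} shows $\cD_\tau$ is removable, and \Cref{thm:adequate-boundary} then puts $\tau$ in $\cK(\pi)$.

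Finally I prove the bijection. The map $\sigma \mapsto \cX\setminus\cD_\sigma$ lands in $\Op(\pi)$ by the first step, and it is order-reversing since $\tau\subseteq\sigma$ implies $\cD_\tau\subseteq\cD_\sigma$. For injectivity, the reduced closed substack $\cD_\sigma$ determines its set of irreducible components, and that set is $\sigma$. For surjectivity, take $\cU \in \Op(\pi)$. By \Cref{thm:divisorial-classification} its reduced boundary $\cD=(\cX\setminus\cU)_\red$ is a reduced effective Cartier divisor satisfying the vanishing condition. Because $\cX$ is noetherian, $\cD$ has finitely many irreducible components $\cD_1,\dots,\cD_m$, and each is a prime divisor by \Cref{lem:boundary-purity}. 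Since $\cD$ is reduced, we have $\cD = \sum \cD_i$ as Cartier divisors, because multiplicities are all one at generic points. Each $\cD_i \subseteq \cD$ is removable by \Cref{cor:removable-restriction}, so $\cD_i\in\PD(\pi)$, and $\sigma=\{\cD_1,\dots,\cD_m\}$ is a face mapping to $\cU$. Conversely, if $\cX\setminus\cD_\tau\subseteq\cX\setminus\cD_\sigma$, comparing components of the boundaries gives $\sigma\subseteq\tau$, so the bijection reverses order in both directions.

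The main obstacle is the identification $\cD=\sum\cD_i$ of the reduced boundary with the sum of its components as Cartier divisors. This needs local factoriality, so that the components are Cartier, together with the fact that on a locally factorial (hence normal) atlas an effective Cartier divisor is determined by its associated Weil divisor. I would check this smooth-locally, where it reduces to the scheme statement for UFD local rings.
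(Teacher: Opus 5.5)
Your proof is correct and follows essentially the same route as the paper. Both arguments use local factoriality to make $\cD_\sigma$ a reduced Cartier divisor, \Cref{thm:adequate-boundary} to identify faces with removable boundaries, \Cref{cor:removable-restriction} for closure under subsets, and \Cref{lem:boundary-purity} (which you invoke through \Cref{thm:divisorial-classification}) plus noetherianity for surjectivity. The only cosmetic difference is in proving $\cD=\sum\cD_i$: you argue via Weil divisors on a normal atlas, whereas the paper simply notes that $\cD$ and $\cD_\sigma$ are both reduced with the same support; either works.
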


\begin{proof}
Local factoriality ensures that every prime divisor on $\cX$ is Cartier
and that a finite sum of distinct prime divisors is reduced.

The empty set belongs to $\cK(\pi)$, and every singleton
$\{\cD\}$ with $\cD \in \PD(\pi)$ belongs to $\cK(\pi)$ by
\Cref{thm:adequate-boundary}. If $\sigma \in \cK(\pi)$ and $\tau \subseteq \sigma$, then $\cU_\sigma$ preserves
$\pi$ and $\cD_\tau \leq \cD_\sigma$. \Cref{cor:removable-restriction} shows that $\cU_\tau$ preserves $\pi$. Since $\cD_\tau$ is reduced, \Cref{thm:adequate-boundary} gives $\tau \in\cK(\pi)$.
Thus $\cK(\pi)$ is an abstract simplicial complex with the vertex set $\PD(\pi)$.

By \Cref{thm:adequate-boundary}, every face $\sigma$ gives an open substack $\cU_\sigma$ preserving $\pi$. Conversely, let $\cU \subseteq \cX$ be an open substack preserving $\pi$. The reduced complement $\cD:=(\cX \setminus \cU)_{\red}$ is an effective Cartier divisor by \Cref{lem:boundary-purity}. Since $\cX$ is noetherian, $\cD$ has finitely many prime
components. Let $\sigma$ be their set. Applying \Cref{cor:removable-restriction} to each prime component of $\cD$
shows that $\sigma \subseteq \PD(\pi)$, and $\cD=\cD_\sigma$. Another application of \Cref{thm:adequate-boundary} gives $\sigma \in \cK(\pi)$, with $\cU=\cU_\sigma$.
\end{proof}
For the moment, it is not immediately clear from \Cref{thm:divisorial-classification} or \Cref{cor:simplicial-correspondence} whether there are finitely many such preserving opens; nevertheless, we will show shortly that when $\cX$ is locally reductive, the faces can be characterized in terms of boundary weights, thereby establishing finiteness of such open subsets.

\section{Quotient stacks}\label{sec:reductive}

\subsection{Recollections on attractors}
Let $k$ be a field. Let $G$ be a smooth affine connected algebraic group over $k$. Let $X$ be a scheme of finite type over $k$ with a $G$-action. 

For any cocharacter $\lambda: \Gm \to G$, consider the functors of $\Gm$-equivariant morphisms
\[
X_\lambda:=\mathrm{Map}_k^{\Gm}(\Spec(k),X) \text{ and } X_\lambda^+:=\mathrm{Map}_k^{\Gm}(\bbA^1_k,X),
\]
where $\bbA^1_k=\Spec k[a]$ is the affine line such that $a$ has $\Gm$-weight $-1$. By \cite[Cor. 1.4.3]{Drinfeld}, these functors are representable by schemes of finite type over $k$. There are natural morphisms: a closed immersion (\cite[Prop. 1.2.2]{Drinfeld}) $X_\lambda \hookrightarrow X$, an unramified morphism $\ev_1: X_\lambda^+ \to X$ (evaluation at 1) and an affine (\cite[Thm. 1.4.2]{Drinfeld}) morphism $\ev_0: X_\lambda^+ \to X_\lambda$ (evaluation at 0) with a (closed immersion) section $\iota: X_\lambda \hookrightarrow X_\lambda^+$. 

For any connected component $F \subseteq X_\lambda$, define
\[
C_{\lambda,F}:=\ev_0^{-1}(F) \subseteq X_\lambda^+.
\]
Then $C_{\lambda,F}$ is connected, and all connected components of $X_\lambda^+$ are of this form.

In general, neither $X_\lambda$ nor $X_\lambda^+$ is $G$-invariant. We define
\[
L_\lambda:=\{l \in G: l=\lambda(t)l\lambda(t)^{-1}, \forall t\} \text{ and } P_\lambda^+:=\{p \in G: \exists \lim_{t \to 0} \lambda(t)p\lambda(t)^{-1}\}.
\]
If $G$ is geometrically reductive, then $P_\lambda^+ \subseteq G$ is a parabolic subgroup. There is a surjective homomorphism $P_\lambda^+ \twoheadrightarrow L_\lambda$ sending $p \mapsto \lim_{t \to 0} \lambda(t)p\lambda(t)^{-1}$. The scheme $X_\lambda$ inherits an action of $L_\lambda$ and $X_\lambda^+$ inherits an action of $P_\lambda^+$ such that $\ev_0: X_\lambda^+ \to X_\lambda$ is equivariant with respect to the surjection $P_\lambda^+ \twoheadrightarrow L_\lambda$, i.e., we have an induced morphism
\[
\ev_0: [X_\lambda^+/P_\lambda^+] \to [X_\lambda/L_\lambda].
\]
The groups $L_\lambda$ and $P_\lambda^+$ are connected, so they preserve connected components of $X_\lambda$ and $X_\lambda^+$.

\begin{remark}\label{rmk:ev1-closed-immersion}
If $X$ is separated, the evaluation map $\ev_1$ is a monomorphism \cite[Prop. 1.4.11]{Drinfeld}, but in general not a locally closed immersion, even restricted to connected components of $X_\lambda^+$ \cite[Ex. 1.4.12]{Drinfeld}. However, the restriction of $\ev_1$ to each connected component of $X_\lambda^+$ is a locally closed immersion if $X$ is smooth \cite[Thm. B.0.3 (i)]{Drinfeld}. 

If the quotient stack $[X/G]$ is $\Theta$-reductive (e.g., if $X$ admits a good quotient $X \to X/G$), then $\ev_1: X_\lambda^+ \to X$ is a closed immersion \cite[Prop. 3.13 and Rmk. 3.14]{AHLH}. In particular, for any $\lambda \in X_*(G)$ and $F \in \pi_0(X_\lambda)$, the restriction $\ev_1: C_{\lambda,F} \to X$ is a closed immersion, and in this case we identify $C_{\lambda,F}$ with a closed subscheme of $X$ via $\ev_1$, and it sits inside
\[
\begin{tikzcd}
C_{\lambda,F} \ar[r,hook] \ar[d] & X_\lambda^+ \ar[r,hook,"\ev_1"] \ar[d,"\ev_0"] & X \\
F \ar[r,hook] \ar[u,bend left,"\iota"] & X_\lambda \ar[u,bend left,"\iota"]
\end{tikzcd}
\]
\end{remark}
A priori, there may be infinitely many connected components $C_{\lambda,F}$. Nevertheless, if $X$ admits a good quotient, we show their $G$-saturations $G.C_{\lambda,F}$ (i.e., the set-theoretical image of the action morphism $G \times C_{\lambda,F} \to X$) form a finite collection.
\begin{setup}\label{setup:quotient}
Let $k$ be an algebraically closed field, let $G$ be a reductive group over $k$, and let $X$ be a $k$-variety with a $G$-action. Suppose that $X$ admits a good quotient $q: X \to X/G$.
\end{setup}
\begin{lemma}\label{lem:quotient-images}
Assume the hypotheses of  \Cref{setup:quotient}. For every cocharacter $\lambda \in X_*(G)$ and every connected component $F \in \pi_0(X_\lambda)$, the subset $G.C_{\lambda,F} \subseteq X$ is closed, and there are only finitely many distinct subsets of the form $G.C_{\lambda,F}$.
\end{lemma}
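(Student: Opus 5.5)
Closedness of $G.C_{\lambda,F}$ comes from a proper pushforward, and finiteness from reducing to a maximal torus and then to finitely many cocharacter chambers.

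\textbf{Closedness.} By \Cref{rmk:ev1-closed-immersion}, $C_{\lambda,F}\subseteq X$ is a closed subscheme. It is stable under $P_\lambda^+$, since $P_\lambda^+$ is connected and preserves the connected components of $X_\lambda^+$. The saturation $G.C_{\lambda,F}$ is therefore the image of the morphism
\[
G\times^{P_\lambda^+} C_{\lambda,F} \longrightarrow X .
\]
This morphism factors as the closed immersion $G\times^{P_\lambda^+}C_{\lambda,F}\hookrightarrow G\times^{P_\lambda^+}X\simeq G/P_\lambda^+\times X$, followed by the projection to $X$. That projection is proper because $G/P_\lambda^+$ is projective. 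Hence the image $G.C_{\lambda,F}$ is closed.

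\textbf{Finiteness, reduction to a torus.} Fix a maximal torus $T\subseteq G$. Every cocharacter of $G$ is conjugate to one of $T$. If $\mu=g\lambda g^{-1}$, then $X_\mu^+=g\cdot X_\lambda^+$, so each $C_{\mu,F'}$ is a translate $g\cdot C_{\lambda,F}$ and has the same $G$-saturation. It therefore suffices to treat $\lambda\in X_*(T)$.

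\textbf{Finiteness, local linearization.} The aim is to show that, as $\lambda$ ranges over $X_*(T)$, only finitely many closed subsets $X_\lambda^+$ and $C_{\lambda,F}$ occur. I would argue via the good quotient $X\to X/G$.
\begin{enumerate}
\item Every point of $C_{\lambda,F}$ specializes under $\lambda$ to a point of $F$. The closures of $G$-orbits in a fiber of $q$ contain a unique closed orbit, and the attractors are determined $G$-equivariantly near closed orbits.
\item By Luna/Alper–Hall–Rydh étale slices, $X$ is covered, Zariski-locally on $X/G$ and using quasi-compactness, by finitely many charts. Each chart is étale over an affine $G$-scheme $G\times^{H}S$ with $H$ reductive and $S$ affine.
\item Embed $X$ (or each affine piece) $T$-equivariantly into a finite-dimensional $T$-representation $V$ with finitely many weights $\chi_1,\dots,\chi_m$.
\item For $\lambda\in X_*(T)$, the fixed locus and attractor in $V$ are determined by the sign vector $(\operatorname{sgn}\langle\chi_i,\lambda\rangle)_i$. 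Indeed, $V_\lambda^+$ is the sum of the weight spaces with $\langle\chi_i,\lambda\rangle\ge 0$ (up to the paper's sign convention), and $V_\lambda$ is the sum of those with $\langle\chi_i,\lambda\rangle=0$.
\item There are only finitely many sign vectors, corresponding to the cones of the hyperplane arrangement in $X_*(T)_\bbR$.
\item Pulling back, $X_\lambda=X\cap V_\lambda$ and $X_\lambda^+=X\cap V_\lambda^+$ take finitely many values as $\lambda$ ranges over $X_*(T)$. Each of these has finitely many connected components, since $X$ is of finite type.
\end{enumerate}
Steps (iii)–(vi) need an equivariant embedding, available when $X$ is quasi-projective or affine. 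In general, I would use the finite cover by $T$-stable affine opens (Sumihiro is unavailable for singular $X$). For this I would work on $q^{-1}$ of affine opens of $X/G$, which are affine $G$-varieties, and use that attractors glue compatibly over such opens. The limit $\lim_{t\to0}\lambda(t)x$ stays in the same fiber of $q$, so attractors of $X$ restrict to attractors of the $G$-stable open $q^{-1}(U)$. Finitely many choices on each of finitely many affine pieces then give finitely many global $C_{\lambda,F}$.

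\textbf{Main obstacle.} The delicate step is the gluing in the previous paragraph. Attractor data on an affine $G$-stable open $q^{-1}(U)$ with $U\subseteq X/G$ affine is controlled by finitely many sign-cones. Passing to global components $C_{\lambda,F}$ requires that two cocharacters in the same cone on every piece give the same global component. I would handle this by refining the cones to a common arrangement for all finitely many pieces, which is still finite. For $\lambda,\lambda'$ in the same refined cone, $X_\lambda^+=X_{\lambda'}^+$ as closed subschemes of $X$, so their components and $G$-saturations coincide.
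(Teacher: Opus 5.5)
Your proposal is correct. The closedness argument and the reduction to a maximal torus are the same as the paper's. Your sign-vector count on an affine piece is the paper's affine step in other language: there $X_\lambda^+=\Spec A/I_\lambda^-$, with $I_\lambda^-$ generated by the homogeneous generators $f_i$ satisfying $\langle\lambda,\chi_i\rangle<0$, which depends only on signs.

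The genuine difference is how you pass from affine pieces to general $X$.

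\textbf{The paper's route.} It takes a $\bbT$-equivariant \'{e}tale surjection $f\colon Y\to X$ with $Y$ affine, from the Alper--Hall--Rydh local structure theorem. It then uses surjectivity of $Y_\lambda^+\to X_\lambda^+$ and writes each $C_{\lambda,F}$ as a union of images $f(Z)$, with $Z$ drawn from a finite collection of components. As the paper remarks, this finiteness step needs no good quotient.

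\textbf{Your route.} You use that $q$ is affine to cover $X$ by finitely many $G$-stable affines $W_j=q^{-1}(U_j)$. You note that $(W_j)_\lambda^+=\ev_1^{-1}(W_j)$, since $q$ is constant along $\lambda$-orbit closures. You then take a common refinement of finitely many hyperplane arrangements.

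\textbf{Comparison.} Your argument is more elementary: Zariski gluing replaces an \'{e}tale slice theorem plus a surjectivity result for attractors along \'{e}tale maps. It also gives the slightly stronger fact that $X_\lambda^+$ itself takes only finitely many values as a closed subscheme of $X$. The cost is a genuine use of the good quotient, which is harmless under \Cref{setup:quotient}.

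\textbf{Two small points.} First, your steps (i)--(ii) on Luna slices and charts $G\times^H S$ play no role in the final argument and can be dropped. Second, if you want equality of subschemes rather than subsets, state the compatibility functorially. For a $\Gm$-equivariant $\phi\colon\bbA^1_S\to X$, the invariant map $q\circ\phi$ factors through $S$, exactly as in \Cref{lem:lifting-Theta}. The pointwise limit statement only gives equality of subsets.
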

For projective varieties, finiteness of attracting decompositions as the cocharacter varies is established in
\cite[Prop. 1.3.8]{DolgachevHu1998}.
The affine finite-weight argument also appears in
\cite[proof of Lem. 4.4.8]{halpern-leistner2014instability}.
\begin{proof}
Fix $\lambda \in X_*(G)$ and $F \in \pi_0(X_\lambda)$. Since $X$ admits a good quotient, by \Cref{rmk:ev1-closed-immersion} we identify $C_{\lambda,F} \subseteq X_\lambda^+$ as a closed subscheme of $X$ via the closed immersion $\ev_1: X_\lambda^+ \hookrightarrow X$. 

For the closedness. The parabolic subgroup $P_\lambda^+ \subseteq G$ preserves $C_{\lambda,F}$. Consider the morphism
\[
\iota: G \times^{P^+_\lambda} C_{\lambda,F} \to G/P^+_\lambda \times X \text{ sending } [(g,x)] \mapsto (gP^+_\lambda,g.x). 
\]
This is a closed immersion. Indeed, after the faithfully flat base change $G \to G/P^+_\lambda$, it becomes $G \times C_{\lambda,F} \to G \times X$ sending $(g,x) \to (g,g.x)$, which is the closed immersion $\id_G \times \ev_1: G \times C_{\lambda,F} \hookrightarrow G \times X$ followed by an automorphism of $G \times X$. Since $G/P_\lambda^+$ is projective, the composition 
\[
G \times^{P^+_\lambda} C_{\lambda,F} \xrightarrow{\iota} G/P^+_\lambda \times X \xrightarrow{\mathrm{pr}_2} X
\]
is projective with set-theoretical image $G.C_{\lambda,F}$. This proves the closedness. 

For the finiteness. Fix a maximal torus $\bbT \subseteq G$. If $\mu=g\lambda g^{-1}$ for some $g\in G(k)$, then multiplication by $g$ induces isomorphisms $X_\lambda^+ \xrightarrow{\sim} X_\mu^+$ and $X_\lambda \xrightarrow{\sim} X_\mu$ compatible with evaluation maps, i.e., $g.C_{\lambda,F}=C_{\mu,gF}$. Thus we have $G.C_{\lambda,F}=G.C_{\mu,gF}$. Since every cocharacter of $G$ is conjugate to a cocharacter of $\bbT$, for the finiteness it suffices to consider $\lambda \in X_*(\bbT)$.

Suppose first that $X=\Spec A$ is affine. Choose homogeneous generators $f_1,\ldots,f_n$ of $A$ as a $k$-algebra, with weights $\chi_1,\ldots,\chi_n \in X^*(\bbT)$. Then by \cite[\S 1.3.4]{Drinfeld}
\[
X_\lambda^+ = \Spec A/I_\lambda^- \text{ with } I_\lambda^-:=
\langle f_i: \langle \lambda,\chi_i \rangle < 0 \rangle.
\]
This shows that there are finitely many distinct $X_\lambda^+$. Each $X_\lambda^+$ has finitely many connected components, so only finitely many subsets $C_{\lambda,F}$ occur.

For general $X$, by \cite[Thm. 4.1]{Luna-slice-for-stacks} and quasi-compactness of $X$, there exists a $\bbT$-equivariant \'{e}tale surjection $f: Y \twoheadrightarrow X$ with $Y$ affine. By the affine case, the collection $\cD:=\{Z \in \pi_0(Y_\mu^+): \mu \in X_*(\bbT)\}$ of subsets of $Y$ is finite. The induced morphism $f_\lambda^+=f|_{Y_\lambda^+}: Y_\lambda^+ \twoheadrightarrow X_\lambda^+$ is surjective by \cite[Prop. 5.20]{Luna-slice-for-stacks}. Hence
\[
C_{\lambda,F} = \bigcup_{Z \in \pi_0(Y_\lambda^+), f(Z) \subseteq C_{\lambda,F}} f(Z).
\]
Thus $C_{\lambda,F}$ is a union of images of elements in the finite collection $\cD$ under the fixed $f$. This proves the finiteness. The finiteness part does not need the existence of a good quotient.
\end{proof}
\begin{definition}
Let $\cL$ be a $G$-linearized line bundle on $X$. For any cocharacter $\lambda: \Gm \to G$ and any connected subset $F \subseteq X_\lambda$, we define 
\[
\wt_\lambda(\cL|_F):=\wt_\lambda(\cL|_x) \in \bbZ \text{ for some } x \in F.
\]
This is well-defined: the $\lambda$-weight, being discrete, is locally constant on $X_\lambda$, hence constant on the connected subset $F$. So the value of $\wt_\lambda(\cL|_F)$ does not depend on the choice of $x \in F$.

For a line bundle $\cL$ on $\Theta:=[\bbA^1/\Gm]$, we define $\wt(\cL):=\wt_\Gm(\cL|_0) \in \bbZ$. For any point $x \in C_{\lambda,F}$, its complete $\lambda$-orbit $\lambda_x: \bbA^1 \to X$ satisfies $\lambda_x(1)=x$ and $x_0:=\lambda_x(0)=\lim_{t \to 0} \lambda(t).x \in F$. This defines a morphism $\lambda_x: \Theta=[\bbA^1/\Gm] \to [X/G]$ and
\begin{equation}\label{eq:comparing-weight}
\wt(\lambda_x^*\cL) = \wt_\Gm(\lambda_x^*\cL|_0) = \wt_\lambda(\cL|_{x_0}) = \wt_\lambda(\cL|_F).   
\end{equation} 
\end{definition}
\begin{lemma}[\cite{Heinloth}, \S 1.A.a]\label{lem:theta-weight}
Let $K$ be a field, and let $\cL$ be a $\mathbb{G}_{m,K}$-equivariant line bundle on $\bbA^1_K$. Then
\[
\Gamma(\Theta_K,\cL)=\Gamma(\bbA^1_K,\cL)^{\mathbb{G}_{m,K}}=
\begin{cases}
0 & \text{if } \wt(\cL) < 0 \\
K \cdot a^{d}e & \text{if } \wt(\cL)=d \geq 0
\end{cases}
\]
where $\bbA^1_K=\Spec K[a]$ and $e$ is the unique (up to a non-zero scalar) non-vanishing section of $\cL$. In particular, there exists a non-zero section of $\cL$ vanishing at $0$ only if $\wt(\cL)>0$.
\end{lemma}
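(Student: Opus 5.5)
The plan is to reduce the computation to the linear algebra of graded modules over $K[a]$. First trivialize $\cL$ equivariantly. Since $\bbA^1_K$ is affine with coordinate ring $K[a]$, which is a PID, $\cL$ corresponds to a graded free $K[a]$-module of rank one. Choose a homogeneous generator $e$; this is possible because a graded projective module of rank one over the graded local-at-the-irrelevant-ideal ring $K[a]$ is free on a homogeneous generator, by the graded Nakayama lemma applied to the fiber at $0$. Then $e$ is non-vanishing, unique up to $K^\times$, and has some weight $w$.

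Next compare $w$ with $\wt(\cL)$. The fiber $\cL|_0=\cL/a\cL$ is spanned by the image of $e$, so $\wt(\cL)=\wt_{\Gm}(\cL|_0)$ is the weight of $e$. With the paper's sign conventions, I would normalize $e$ so that this weight is the one by which $\Gm$ acts on the fiber, taking care to match the convention that $a$ has weight $-1$.

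Now compute invariants. Every section of $\cL$ over $\bbA^1_K$ is $p(a)e$ with $p\in K[a]$, and $a^m e$ has weight $\wt(\cL)-m$ under the weight $-1$ convention for $a$. To reconcile this with the displayed answer, $a^d e$ must be invariant when $d=\wt(\cL)$, so the invariants would be $K\cdot a^{d}e$ when $d\ge0$ and zero when $d<0$. The one step needing care is this sign bookkeeping: whether $\Gm$ acts on sections by $(t\cdot s)(x)=t\,s(t^{-1}x)$. I would fix the convention that makes $a^{d}e$ invariant and check it against the formula. The identification $\Gamma(\Theta_K,\cL)=\Gamma(\bbA^1_K,\cL)^{\Gm}$ is the standard description of sections on a quotient stack as equivariant sections.

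The final claim then follows directly. A nonzero invariant section is $c\,a^{d}e$ with $c\ne0$, and it vanishes at $0$ exactly when $d>0$. I expect no real obstacle here beyond consistency of the weight sign conventions.
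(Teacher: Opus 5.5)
Your argument is correct and is the standard graded-module computation. The paper does not reproduce it but cites it from \cite{Heinloth}, \S 1.A.a: $M:=\Gamma(\bbA^1_K,\cL)=K[a]\,e$ with $e$ homogeneous, and the weight-zero part of $M$ is $K\cdot a^{d}e$ for $d\geq 0$ and $0$ otherwise. The sign issue you flag is not a real choice. Quasi-coherent sheaves on $[\Spec K[a]/\Gm]$ are graded $K[a]$-modules, $\cL|_0=M/aM$ carries the induced grading, and $\Gamma(\Theta_K,\cL)=M_0$. This forces $\deg e=\wt(\cL)$ and $\deg(a^m e)=\wt(\cL)-m$, so present it this way rather than as fixing the convention that makes $a^d e$ invariant.
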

\begin{corollary}\label{cor:negative-in-D}
Let $\cX$ be an algebraic stack, let $\cD$ be an effective Cartier divisor on $\cX$, and let $\cL:=\cO_\cX(\cD)$. Then any morphism $f: \Theta \to \cX$ with $\wt(f^*\cL)<0$ factors through $\cD$.    
\end{corollary}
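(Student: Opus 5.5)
The plan is to pull back the canonical section along $f$ and use \Cref{lem:theta-weight} to see that it vanishes identically. Let $s \in \Gamma(\cX,\cL)$ be the canonical section, so that $\cD = Z(s)$ is its zero locus. Zero loci of sections of line bundles commute with base change, so
\[
f^{-1}(\cD) = \Theta \times_{\cX} \cD = Z(f^*s) \subseteq \Theta .
\]
Hence $f$ factors through $\cD$ exactly when $Z(f^*s)=\Theta$ as closed substacks, that is, when $f^*s = 0$ in $\Gamma(\Theta, f^*\cL)$. This reduces the statement to a vanishing statement on $\Theta$.

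Next, I compute $\Gamma(\Theta,f^*\cL)$. Here $\Theta=[\bbA^1/\Gm]$ over the base field $K$ of $f$; if $f$ is defined over a more general base, I first interpret it over a field. A line bundle on $\Theta_K$ is a $\Gm$-equivariant line bundle on $\bbA^1_K$. Every such bundle is trivial as a line bundle, since $\Pic(\bbA^1_K)=0$, so it has the form $\cO\cdot e$ with $\Gm$ acting on the generator $e$ through a character. That character equals $\wt(f^*\cL)$, the weight at $0$. Then \Cref{lem:theta-weight} gives $\Gamma(\Theta_K,f^*\cL)=0$, because $\wt(f^*\cL)<0$. So $f^*s=0$, its zero locus is all of $\Theta$, and $f$ factors through $\cD$ scheme-theoretically, not just set-theoretically.

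The only point needing care is the base: the statement does not say over what $\Theta$ lives. If $\Theta$ is taken over a general base $S$ instead of a field, the weight is presumably defined fiberwise. In that case I would check the vanishing of $f^*s$ after the fppf cover $\bbA^1_S \to \Theta_S$. On the cover $f^*s$ is a $\Gm$-invariant function times $e$. Any monomial $a^d$ appearing in it must have weight matching the negative character, which forces $d<0$ (recall $a$ has weight $-1$), so $f^*s=0$ there as well. I expect no real obstacle. The substantive input is \Cref{lem:theta-weight} together with the base-change compatibility of zero loci, which \Cref{prop:descent-obstruction} already used.
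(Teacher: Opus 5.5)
Your proposal is correct and is essentially the paper's argument: it too notes $\Gamma(\Theta,f^*\cL)=0$ by \Cref{lem:theta-weight}, so the pulled-back canonical section vanishes, $f^*s=0$, and $f$ therefore factors through the zero locus of $s$, which is $\cD$. Your base-change-of-zero-loci remark only makes the last step explicit, and the general-base digression is unnecessary, since the statement is only applied to $\Theta_K$ over a field $K$.
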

\begin{proof}
Since $\Gamma(\Theta,f^*\cL)=0$ by \Cref{lem:theta-weight}, for the canonical section $s$ of $\cL$ this means $f^*s=0$. Then $f$ factors through the zero locus of $s$, which is $\cD$.
\end{proof}
\begin{corollary}\label{cor:weight-inclusion}
Let $k$ be a field, let $G$ be an affine algebraic group over $k$, and let $X$ be a $k$-scheme with a $G$-action. Let $D$ be a $G$-invariant effective Cartier divisor on $X$ and $\cL:=\cO_X(D)$. For any $\lambda \in X_*(G)$ and $F \in \pi_0(X_\lambda)$, we have
\begin{enumerate}
\item 
If $\wt_\lambda(\cL|_F)<0$, then $G.C_{\lambda,F} \subseteq D$.
\item 
If $D \cap F = \varnothing$, then $\wt_{\lambda}(\cL|_F) = 0$.
\end{enumerate}
\end{corollary}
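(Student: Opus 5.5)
For (i), I will first treat points of $C_{\lambda,F}$ and then use $G$-invariance of $D$ to pass to the saturation. Fix a point $x \in C_{\lambda,F}$, i.e., a point of $X$ whose limit $x_0=\lim_{t\to 0}\lambda(t).x$ exists and lies in $F$. Its complete $\lambda$-orbit gives a morphism $\lambda_x:\Theta \to [X/G]$ with $\lambda_x(1)=x$. The divisor $D$ is $G$-invariant, so it descends to an effective Cartier divisor $[D/G]$ on $[X/G]$ with $\cO_{[X/G]}([D/G])$ corresponding to the linearized bundle $\cL$. By \eqref{eq:comparing-weight} we have $\wt(\lambda_x^*\cL)=\wt_\lambda(\cL|_F)<0$. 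Then \Cref{cor:negative-in-D} shows that $\lambda_x$ factors through $[D/G]$, so in particular $x=\lambda_x(1)\in D$.

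For points that are not $k$-rational I would run the same argument over the residue field, base changing to $K=\kappa(x)$; \Cref{lem:theta-weight} is stated over an arbitrary field, so this causes no trouble. Alternatively, one can argue with the canonical section directly: the pullback $\lambda_x^*s$ is a $\Gm$-invariant section of a line bundle on $\bbA^1_K$ of negative weight, hence zero by \Cref{lem:theta-weight}. Either way we get $C_{\lambda,F}\subseteq D$ as sets, and $G$-invariance of $D$ then gives $G.C_{\lambda,F}\subseteq D$.

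For (ii), suppose $D\cap F=\varnothing$ and pick $x\in F$. Since $x$ is $\lambda$-fixed, the constant map $\lambda_x:\Theta\to[X/G]$ (the orbit of a fixed point) again has $\wt(\lambda_x^*\cL)=\wt_\lambda(\cL|_F)$. The canonical section $s$ does not vanish at $x$, so $\lambda_x^*s$ is a nowhere-vanishing invariant section of $\lambda_x^*\cL$ on $\bbA^1_K$. By \Cref{lem:theta-weight}, a nonzero invariant section exists only if $\wt\geq 0$, and it is of the form $a^d e$ with $d=\wt$. Such a section vanishes at $0$ when $d>0$, so non-vanishing at $0$ forces $d=0$. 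More concretely, $s(x)$ is a nonzero $\lambda$-invariant vector in the one-dimensional fiber $\cL|_x$, so $\lambda$ acts trivially on it and the weight is $0$.

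I expect no real obstacle here. The only point needing care is to check that the $G$-linearization on $\cO_X(D)$ induced by $G$-invariance of $D$ makes the canonical section $s$ $G$-invariant. This is exactly what allows $s$ to descend to a section of the corresponding line bundle on the quotient stack $[X/G]$, and it is the hypothesis under which \Cref{cor:negative-in-D} and \Cref{lem:theta-weight} are applied.
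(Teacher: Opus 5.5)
Your proposal is correct and follows essentially the same route as the paper. In (i) you use the complete $\lambda$-orbit $\Theta \to [X/G]$, the weight comparison \eqref{eq:comparing-weight}, \Cref{cor:negative-in-D}, and then $G$-invariance of $D$. In (ii), your ``more concretely'' argument (the non-vanishing $G$-invariant canonical section gives a $\lambda$-equivariant trivialization of $\cL$ on $F$) is exactly the paper's, and the detour through \Cref{lem:theta-weight} is a harmless variant of it.
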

\begin{proof}
For (i), suppose $\wt_\lambda(\cL|_F)<0$. By $G$-invariance it suffices to show $C_{\lambda,F} \subseteq D$. For any $x \in C_{\lambda,F}$, let $\lambda_x: \bbA^1 \to X$ be its complete $\lambda$-orbit map so $\lambda_x(1)=x$ and $\lambda_x(0) \in F$, and it induces $\lambda_x: [\bbA^1/\Gm] \to [X/G]$. If $\wt_\lambda(\cL|_F)<0$, then $\wt(\lambda_x^*\cL)<0$ by \eqref{eq:comparing-weight}. This shows that $\lambda_x(1)=x \in D$ by \Cref{cor:negative-in-D}. For (ii), let $s \in \Gamma(X,\cL)^G$ be the canonical section of $\cL$ defining $D$. Since $D \cap F=\varnothing$, the section $s$ is nowhere vanishing on $F$. So it induces a $\lambda$-equivariant trivialization $\cO_F \xrightarrow{\sim} \cL|_F$ sending $1 \mapsto s|_F$. Since $\lambda$ acts trivially on $F$, it follows that $\wt_\lambda(\cL|_F)=0$.
\end{proof}
\begin{lemma}\label{lem:lifting-Theta}
Consider a Cartesian diagram
\[
\begin{tikzcd}
\cY \ar[r] \ar[d] & \cX \ar[d] \\
\cZ \ar[r] & X
\end{tikzcd}
\]
where $\cX,\cY,\cZ$ are algebraic stacks and $X$ is an algebraic space. A morphism $f: \Theta \to \cX$ lifts to $\cY$ if and only if $f(1) \in \cX$ lifts to $\cY$.
\end{lemma}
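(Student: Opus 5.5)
The statement concerns a Cartesian square $\cY = \cZ \times_X \cX$ with $X$ an algebraic space. I would use the universal property of the fiber product together with one key fact: every morphism from $\Theta$ to an algebraic space factors uniquely through $\Spec k$, via the good moduli space $\Theta \to \Spec k$.

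The forward direction is trivial. If $f$ lifts to $\tilde f: \Theta \to \cY$, then $\tilde f(1)$ lifts $f(1)$.

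For the converse, suppose $x = f(1): \Spec K \to \cX$ lifts to $y: \Spec K \to \cY$. Let $z: \Spec K \to \cZ$ be the composition of $y$ with $\cY \to \cZ$. By the universal property, a lift of $f$ is the same as a morphism $g: \Theta \to \cZ$ together with a $2$-isomorphism between $\pi\circ f$ and $p\circ g$ as maps $\Theta \to X$, where $\pi: \cX \to X$ and $p: \cZ \to X$.

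The key step is to analyze $\pi \circ f$. Since $X$ is an algebraic space and $\Theta_K \to \Spec K$ is a good moduli space, it is universal for maps to algebraic spaces. Concretely, $\Theta_K \to X$ corresponds to a $\Gm$-invariant map $\bbA^1_K \to X$. Because $\bbA^1_K \to \Spec K$ is the categorical quotient for $\Gm$-invariant maps to algebraic spaces, this map factors through $\Spec K$. Hence $\pi\circ f$ factors as $\Theta_K \to \Spec K \xrightarrow{\pi(x)} X$, and $\pi(x)$ equals $\pi\circ f(1)$.

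Now define $g$ as the composition $\Theta_K \to \Spec K \xrightarrow{z} \cZ$. Then $p\circ g$ is $\Theta_K \to \Spec K \xrightarrow{p(z)} X$. The lift $y$ gives an identification $p(z) = \pi(x)$; since $X$ is an algebraic space, this is an equality of morphisms. Therefore $p\circ g = \pi\circ f$, and the pair $(f, g)$ defines a morphism $\Theta_K \to \cY$ lifting $f$. Because $X$ is an algebraic space, its $2$-morphisms are trivial, so no coherence issues arise.

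The main point needing care is the factorization of $\Theta_K \to X$ through $\Spec K$ when $X$ is an arbitrary quasi-separated algebraic space rather than a scheme. I would cite that good moduli spaces are universal for maps to algebraic spaces (Alper). Alternatively, one can argue directly: the image of $\bbA^1_K$ in $X$ lies in an étale neighborhood of the image of $0$, and since $0$ lies in the closure of every $\Gm$-orbit, one reduces to the case of an affine target, where invariant functions on $\bbA^1_K$ are constants.

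If the statement is intended over a base $k$ with $f$ defined on $\Theta = \Theta_k$, the same argument applies with $K = k$.
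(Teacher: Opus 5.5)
Your proposal is correct and is essentially the paper's proof. Both use universality of the good moduli space $\Theta_k \to \Spec k$ to factor $\pi \circ f$ through the point $\pi(f(1))$, then pair $f$ with $\Theta_k \to \Spec k \xrightarrow{z} \cZ$ to get the lift through the fiber product; your extra remarks (triviality of $2$-morphisms into the algebraic space $X$, and identifying the factoring point as $\pi(f(1))$) only spell out what the paper leaves implicit.
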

\begin{proof}
Consider $\Theta_k \xrightarrow{f} \cX \to X$. Since $X$ is an algebraic space, the composition factors through the good moduli space $\Theta_k \to \Spec k$, i.e., factors through a $k$-point $x \in X(k)$. If $f(1) \in \cX$ lifts to $\cY$, the $k$-point $x \in X(k)$ lifts to a $k$-point $z \in \cZ(k)$. The morphisms $f: \Theta_k \to \cX$ and $\Theta_k \to \Spec k \xrightarrow{z} \cZ$ induce a morphism $\Theta_k \to \cY$ lifting $f$.
\end{proof}
\subsection{Geometric characterization of the boundary}
\begin{theorem}\label{thm:quotient-boundary}
Assume the hypotheses of  \Cref{setup:quotient}. Let $D$ be a $G$-invariant reduced effective Cartier divisor on $X$ and $\cL:=\cO_X(D)$. Then $D \subseteq X$ is removable for $\pi$ if and only if
\begin{equation}\label{eq:character-D}
D = \bigcup_{\wt_\lambda(\cL|_F)<0} G.C_{\lambda,F}.
\end{equation}
In particular, if $D$ is irreducible, then $D \subseteq X$ is removable for $\pi$ if and only if
\[
D = G.C_{\lambda,F} \text{ with } \wt_\lambda(\cL|_F)<0.
\]
\end{theorem}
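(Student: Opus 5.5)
The plan is to use \Cref{thm:adequate-boundary} in the form (i) $\Leftrightarrow$ (iii): $D$ is removable if and only if $(\pi\circ i)_*\cN^{\otimes n}=0$ for all $n>0$, where $\cN=\cL|_D$. Concretely, for every open $W\subseteq X/G$, every $G$-invariant section of $\cN^{\otimes n}$ over $D\cap q^{-1}(W)$ should vanish. The inclusion ``$\supseteq$'' in \eqref{eq:character-D} holds for any $G$-invariant $D$ by \Cref{cor:weight-inclusion}(i). So the content of the theorem is that removability is equivalent to every point of $D$ lying in some $G.C_{\lambda,F}$ with $\wt_\lambda(\cL|_F)<0$.

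\emph{Sufficiency.} Assume \eqref{eq:character-D}, and let $h$ be a $G$-invariant section of $\cN^{\otimes n}$ over $D\cap q^{-1}(W)$. After translating by $G$, any point lies in some $C_{\lambda,F}\subseteq D$ with negative weight. Since $C_{\lambda,F}$ is closed and $\Gm$-stable, the orbit map $\lambda_x:\Theta\to[X/G]$ factors through $[D/G]$. By \eqref{eq:comparing-weight}, $\wt(\lambda_x^*\cN^{\otimes n})=n\,\wt_\lambda(\cL|_F)<0$, so \Cref{lem:theta-weight} gives $\lambda_x^*h=0$, and in particular $h(x)=0$. Hence $h$ vanishes at every point. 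Because $D$ is reduced, $h=0$, and (iii) holds.

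\emph{Necessity.} Assume $D$ is removable and fix $x\in D$. The restriction $D\to q(D)$ is again a good quotient, so the fibre of $D$ through $x$ contains a unique closed orbit $G.x_0$. By Hilbert--Mumford for $D$ there is a cocharacter $\lambda$ with $\lim_{t\to 0}\lambda(t).x=x_0$. Let $H=\mathrm{Stab}(x_0)$, which is reductive by Matsushima, and let $\chi$ be the character of $H$ on $\cN_{x_0}$.
\begin{enumerate}
\item First I claim $\chi$ is non-torsion. If $\chi^{n}=1$, then $\cN^{\otimes n}|_{G.x_0}$ has a nowhere-vanishing invariant section.
\item Since $G.x_0$ is a closed $G$-invariant subscheme of $D$ and pushforward along a good moduli space is exact, this section lifts to a nonzero $G$-invariant section of $\cN^{\otimes n}$ near $q(x_0)$. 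That contradicts (iii).
\item So $\chi$ is non-torsion, and a maximal torus $T\subseteq H$ containing $\lambda(\Gm)\subseteq H$ satisfies $\chi|_T\neq 0$.
\end{enumerate}
The remaining task is to produce a cocharacter $\nu\in X_*(T)$ with $\lim_{t\to 0}\nu(t).x$ existing and $\langle\nu,\chi\rangle<0$. This is the step I expect to be the main obstacle. If $\langle\lambda,\chi\rangle<0$ we are done. Otherwise, I would pass to a $T$-equivariant affine \'etale chart around $x_0$, which exists by \cite[Thm. 4.1]{Luna-slice-for-stacks}. There, by the description $X_\nu^+=\Spec A/I_\nu^-$ from the proof of \Cref{lem:quotient-images}, the set $\sigma_x\subseteq X_*(T)_{\bbQ}$ of $\nu$ for which $x$ has a limit is a rational polyhedral cone containing $\lambda$.

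The argument then needs $\chi$ to be negative somewhere on $\sigma_x$. Suppose instead $\chi\geq 0$ on all of $\sigma_x$. I would try to turn this into a $T$-semi-invariant function of weight $-\chi$ (or a multiple) that is nonzero at $x$. Multiplying it by a local generator of $\cN$ would give an invariant section contradicting (iii) at $x$ itself; this is a Farkas/cone-duality step on the weights of the generators. With such $\nu$, the point $x$ lies in $C_{\nu,F}$, where $F\ni\lim\nu(t).x$, and \eqref{eq:comparing-weight} gives negative weight. Letting $x$ range over $D$ proves \eqref{eq:character-D}.

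\emph{Irreducible case.} By \Cref{lem:quotient-images}, the union in \eqref{eq:character-D} is a finite union of closed subsets. If $D$ is irreducible, one of these subsets must equal $D$, and the converse direction is a special case of sufficiency.
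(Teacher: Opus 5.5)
Your sufficiency argument is fine; it is the easy half of Hilbert--Mumford, which the paper also uses. The necessity argument, however, has a genuine gap exactly at the step you flag, and the proposed cone-duality argument cannot close it.

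If $\chi\ge 0$ on $\sigma_x$, Farkas gives a $T$-semi-invariant function nonvanishing at $x$, hence a $T$-invariant section of $\cN^{\otimes m}$ nonvanishing at $x$. But condition (iii) of \Cref{thm:adequate-boundary} only forbids $G$-invariant sections, and nothing in your argument upgrades a $T$-invariant section to a $G$-invariant one that stays nonzero at $x$.

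In fact your intermediate claim is false. Let $G=\SL_2\times\Gm$ act on $X=\bbA^3$ by $(h,u).(v,z)=(uhv,uz)$, and let $D=\{z=0\}$. Then $k[X]^G=k[X\setminus D]^G=k$, so $D$ is removable. Take $x=(1,1,0)$. Then $x_0=0$, $H=G$, and your first step may return $\lambda(t)=(\diag(t,t^{-1}),t^2)$, since $\lambda(t).x=(t^3,t,0)\to 0$. The only maximal torus containing $\lambda(\Gm)$ is the diagonal torus $T$. For $\nu(t)=(\diag(t^a,t^{-a}),t^b)$, the limit $\lim_{t\to 0}\nu(t).x$ exists iff $b\ge |a|$, while $\nu$ acts on the normal direction $z$ with weight $b\ge 0$. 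So no $\nu\in X_*(T)$ has negative weight. Correspondingly, $v_1v_2z^{-2}$ restricts to a $T$-invariant section of $\cN^{\otimes 2}$ nonvanishing at $x$, even though $\Gamma(D,\cN^{\otimes n})^G=0$ for all $n>0$.

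The cocharacter that actually works is built from $\nu(t)=(\diag(t,t^{-1}),t^{-1})$, which fixes $y=(1,0,0)$ with weight $-1$ on $\cN$. Conjugating by $g\in\SL_2$ with $g.y=x$ gives $g\nu g^{-1}$. This cocharacter fixes $x$ rather than contracting it to $x_0$, and it does not lie in $T$.

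The missing ingredient is the hard direction of the Hilbert--Mumford criterion for the $G$-action itself, with respect to the linearized bundle $\cN$. This is how the paper argues:
\begin{enumerate}
\item For an affine \'etale $W\to X/G$, condition (iii) says $\Gamma(D_W,N_W^{\otimes n})^G=0$ for all $n>0$.
\item Since $D_W$ is reduced and affine, this is equivalent to $D_W^{\sst}(N_W)=\varnothing$.
\item Hilbert--Mumford then gives, for every point, a cocharacter of $G$ along which the limit exists and $\cN$ has negative weight. This cocharacter need not lie in a torus of $\mathrm{Stab}(x_0)$, and its limit need not be $x_0$.
\item This descends to $D$ by \Cref{lem:lifting-Theta}, and \Cref{cor:weight-inclusion} converts it into \eqref{eq:character-D}.
\end{enumerate}
With this, your preliminary steps on $H$ and on $\chi$ being non-torsion become unnecessary.

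A smaller point: in those steps you lift a section using exactness of invariants. This fails in positive characteristic, where $q$ is only adequate. Adequacy still lifts a power of the section, which would suffice there.
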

\begin{proof}
Since $X$ admits a good quotient, each subset $G.C_{\lambda,F}$ is closed in $X$, and they form a finite collection by \Cref{lem:quotient-images}. To show the equivalence we introduce a bridge condition

\vspace{0.3em}

(\S) for every geometric point $x$ of $D$, there exists a morphism $f: \Theta \to [D/G]$ with $f(1) \simeq x$ and $\wt(f^*\cN)<0$, where $\cN:=\cN_{D/X}=\cL|_D$ is the normal line bundle.

\vspace{0.3em}

Fix an \'{e}tale cover $W \to X/G$ with $W$ affine. Since $\pi$ is affine and $W$ is affine, its base change $X_W:=X\times_{X/G}W$ is affine. The following chain proves the equivalence
\[
\begin{tikzcd}
\text{the Cartier divisor } D \subseteq X \text{ is removable for } \pi \ar[d,"\text{by \Cref{thm:adequate-boundary}}"] \\
\Gamma(D_W,N_W^{\otimes n})^G=0 \text{ for every } n>0 \ar[u] \ar[d,"D_W \text{ is reduced}"] \\
D_W^\sst(N_W) = \varnothing \ar[d,"\text{by Hilbert-Mumford criterion}"] \ar[u] \\
(\S) \text{ on } D_W \ar[u] \ar[d,"\text{by \Cref{lem:lifting-Theta}}"] \\
(\S) \ar[u],
\end{tikzcd}
\]
where
\[
D_W:=D \times_{X/G} W \text{ and } N_W:=p_D^*\cN \simeq \cN_{D_W/X_W} \text{ with } p_D:D_W \to D \text{ the projection}
\]
The $G$-action on $D_W$ and the $G$-linearization of $N_W$ are induced by pullback, with $G$ acting trivially on $W$.

Suppose the condition (\S) holds. Then each geometric point of $D$ belongs to some $C_{\lambda,F}$ such that $\wt_\lambda(\cL|_F)<0$, so \Cref{cor:weight-inclusion} gives \eqref{eq:character-D}. Conversely, suppose the equality \eqref{eq:character-D} holds. Then each geometric point $x$ of $D$ can be written as $x=g.y$ for some $g \in G$ and $y \in C_{\lambda,F}$ such that $\wt_\lambda(\cL|_F)<0$. The cocharacter $g \lambda g^{-1}$ has a limit at $x$, lying in $gF \subseteq D$, with the same negative weight. This defines a morphism $f: \Theta \to [D/G]$ with $f(1) \simeq x$ and $\wt(f^*\cN)<0$. This proves (\S).
\end{proof}
\begin{corollary}\label{cor:finite-reductive}
Assume the hypotheses of  \Cref{setup:quotient}. Assume in addition that $X$ is smooth. Then $\PD(\pi)$ is finite, and each element of $\PD(\pi)$ is of the form
\[
G.C_{\lambda,F} \text{ for some } \lambda \in X_*(G), F \in \pi_0(X_{\lambda}) \text{ with } \wt_\lambda(\cO_X(G.C_{\lambda,F})|_F)<0.
\]
Write $\PD(\pi)=\{D_1,\ldots,D_r\}$. For a subset $\sigma \subseteq \{1,\ldots,r\}$, write 
\[
D_\sigma:=\sum_{i \in \sigma} D_i \text{ and } \cL_\sigma:=\cO_X(D_\sigma).
\]
Call $\sigma$ \emph{admissible} if, for every $i \in \sigma$, there exist $\lambda_i \in X_*(G)$ and $F_i \in \pi_0(X_{\lambda_i})$ such that
\[
D_i=G.C_{\lambda_i,F_i}  \text{ and } \wt_{\lambda_i}(\cL_\sigma|_{F_i})<0.
\]
The assignment $\sigma \mapsto X \setminus D_\sigma$ is an order-reversing bijection from admissible subsets of $\{1,\ldots,r\}$ to $\Op(q)$. In particular, we have $|\Op(q)| \leq 2^r$.
\end{corollary}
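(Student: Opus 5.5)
The plan is to combine \Cref{thm:quotient-boundary} with \Cref{lem:quotient-images} and \Cref{cor:simplicial-correspondence}. Since $X$ is a smooth variety, $[X/G]$ is locally factorial, so every prime divisor is Cartier and finite sums of distinct prime divisors are reduced. The quotient stack $[X/G]$ has affine diagonal, and $q$ is a good, hence adequate, moduli space. Thus \Cref{cor:simplicial-correspondence} applies.

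\textbf{Finiteness and shape of $\PD(\pi)$.} Let $D$ be a removable prime divisor on $[X/G]$, i.e.\ a $G$-invariant prime divisor on $X$ (using connectedness of $G$). The irreducible case of \Cref{thm:quotient-boundary} gives $D=G.C_{\lambda,F}$ with $\wt_\lambda(\cO_X(D)|_F)<0$. By \Cref{lem:quotient-images}, only finitely many subsets $G.C_{\lambda,F}$ occur. Hence $\PD(\pi)$ is finite, and its elements have the stated form.

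\textbf{Admissible subsets are exactly the faces.} Fix $\sigma$ and apply \Cref{thm:quotient-boundary} to the reduced divisor $D_\sigma$ with $\cL_\sigma$. Then $D_\sigma$ is removable iff
\[
D_\sigma=\bigcup_{\wt_\lambda(\cL_\sigma|_F)<0} G.C_{\lambda,F}.
\]
The inclusion $\supseteq$ always holds by \Cref{cor:weight-inclusion}(i).

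\emph{Admissible $\Rightarrow$ removable.} If $\sigma$ is admissible, each $D_i=G.C_{\lambda_i,F_i}$ with $i\in\sigma$ appears in the union. Hence the union contains $\bigcup_{i\in\sigma} D_i=D_\sigma$, equality holds, and $D_\sigma$ is removable.

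\emph{Removable $\Rightarrow$ admissible.} Suppose $D_\sigma$ is removable. Each $G.C_{\lambda,F}$ with negative $\cL_\sigma$-weight is closed and contained in $D_\sigma$, and finitely many such sets cover $D_\sigma$. Consider the generic point of an irreducible component $D_i$. It lies in some $G.C_{\lambda,F}\subseteq D_\sigma$ of negative $\cL_\sigma$-weight. That set is closed and contains the generic point, so it contains $D_i$.

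We still need equality $G.C_{\lambda,F}=D_i$. The key point is that $G.C_{\lambda,F}$ is irreducible when $X$ is smooth. Indeed, $C_{\lambda,F}$ is a connected component of $X_\lambda^+$, which is smooth since $X$ is smooth (the Białynicki-Birula situation; cf.\ \cite[Thm. B.0.3]{Drinfeld}), so it is irreducible. Its $G$-saturation, the image of the irreducible $G\times C_{\lambda,F}$, is then irreducible as well. An irreducible closed subset of the pure codimension-one set $D_\sigma$ that contains the component $D_i$ must equal $D_i$. This yields $\lambda_i,F_i$ with $D_i=G.C_{\lambda_i,F_i}$ and $\wt_{\lambda_i}(\cL_\sigma|_{F_i})<0$, so $\sigma$ is admissible.

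\textbf{Conclusion.} By \Cref{cor:simplicial-correspondence}, $\sigma\mapsto X\setminus D_\sigma$ is an order-reversing bijection from faces onto $\Op(q)$. The faces are exactly the admissible subsets, which gives the claimed bijection and the bound $|\Op(q)|\le 2^r$.

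The main obstacle is the irreducibility of $G.C_{\lambda,F}$, which is needed to pin down $D_i$ exactly rather than merely bound it. This uses smoothness of $X$ via the Białynicki-Birula/Drinfeld smoothness of attractors.
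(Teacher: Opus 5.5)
Your proposal is correct and follows essentially the paper's own argument. The shape and finiteness of $\PD(\pi)$ come from the irreducible case of \Cref{thm:quotient-boundary} together with \Cref{lem:quotient-images}, and the converse direction rests on the same key point: $G.C_{\lambda,F}$ is irreducible because connected attractors of a smooth $X$ are smooth (the paper cites \cite[Prop. 1.4.20]{Drinfeld} for this rather than Thm.~B.0.3).

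The differences are only bookkeeping. You get the bijection by matching admissible subsets with the faces of \Cref{cor:simplicial-correspondence}, whereas the paper re-derives surjectivity directly via \Cref{lem:boundary-purity} and \Cref{cor:removable-restriction}. You also spell out the inclusion from \Cref{cor:weight-inclusion} in the admissible $\Rightarrow$ removable step, which the paper leaves implicit.
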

\begin{proof}
By definition, every element of $\PD(\pi)$ is removable and prime, hence of the form $G.C_{\lambda,F}$ for a single $\lambda \in X_*(G)$ and $F \in \pi_0(X_\lambda)$ with $\wt_\lambda(\cO_X(G.C_{\lambda,F})|_F)<0$ by \Cref{thm:quotient-boundary}. Such closed subsets $G.C_{\lambda,F}$ form a finite collection by \Cref{lem:quotient-images}. 

For every admissible subset $\sigma \subseteq \{1,\ldots,r\}$, the closed subscheme $D_\sigma$ is reduced and removable for $\pi$ by \Cref{thm:quotient-boundary}. This means the assignment $\sigma \mapsto X \setminus D_\sigma$ is well-defined and it is injective. Conversely, let $D = D_1 + \cdots + D_s$ be a reduced removable closed subscheme of $X$. Then $D$ is a $G$-invariant effective Cartier divisor on $X$ by \Cref{lem:boundary-purity}. Each prime boundary $D_i$ is $G$-invariant since $G$ is connected, and is removable for $\pi$ by \Cref{cor:removable-restriction}. Then $D_i \in \PD(\pi)$. 

Since $X$ is smooth, each connected attractor $C_{\lambda,F}$ is smooth by \cite[Prop. 1.4.20]{Drinfeld}, hence irreducible. As $G$ is connected, the $G$-saturation $G.C_{\lambda,F}$ is also irreducible. By \Cref{thm:quotient-boundary} and \Cref{lem:quotient-images}, $D$ is a finite union of these
closed irreducible subsets with $\wt_{\lambda}(\cO_X(D)|_F)<0$. Every prime component $D_i$ therefore equals one of the subsets occurring in this union, giving the required weight condition.

Counting possible subsets gives the upper bound.
\end{proof}

\begin{remark}\label{rmk:weight-computation-torus}
Fix a subset $\sigma \subseteq \{1,\ldots,r\}$. For any fixed index $i \in \sigma$, by \Cref{cor:finite-reductive}, there exist $\lambda_i \in X_*(G)$ and $F_i \in \pi_0(X_{\lambda_i})$ such that
\[
D_i=G.C_{\lambda_i,F_i} \text{ and } \wt_{\lambda_i}(\cO_X(D_i)|_{F_i})<0.
\]
Then we can compute
\begin{align*}
\wt_{\lambda_i}(\cL_\sigma|_{F_i})&=\wt_{\lambda_i}(\cO_X(D_\sigma)|_{F_i})=\wt_{\lambda_i}\left(\cO_X\left(\sum_{j \in \sigma} D_j\right)|_{F_i}\right) \\
&=\wt_{\lambda_i}\left(\bigotimes_{j \in \sigma} \cO_X(D_j)|_{F_i}\right) = \sum_{j \in \sigma} \wt_{\lambda_i}(\cO_X(D_j)|_{F_i}) \\
&=\underbrace{\wt_{\lambda_i}(\cO_X(D_i)|_{F_i})}_{<0 \text{ by } \ref{thm:quotient-boundary}}+\sum_{j \in \sigma \setminus \{i\}} \underbrace{\wt_{\lambda_i}(\cO_X(D_j)|_{F_i})}_{\geq 0 \text{ by } \ref{cor:weight-inclusion}}
\end{align*}
In particular, if $\wt_{\lambda_i}(\cL_\sigma|_{F_i})<0$ then $\wt_{\lambda_i}(\cO_X(D_i)|_{F_i})<0$. This explains \Cref{cor:removable-restriction} from the weight perspective. By \Cref{cor:finite-reductive}, we see $D_\sigma$ is removable if and only if for each $i \in \sigma$ the pair $(\lambda_i,F_i)$ can be arranged so that the $|\sigma|$-dimensional vector
\[
(\wt_{\lambda_i}(\cL_\sigma|_{F_i}))_{i \in \sigma} \in \bbZ^{|\sigma|}
\]
consists of negative components, if and only if the $|\sigma| \times |\sigma|$ matrix
\[
\left(\wt_{\lambda_i}(\cO_X(D_j)|_{F_i})\right)_{i,j \in \sigma} \in \Mat_{|\sigma|}(\bbZ)
\]
is a row strictly diagonally dominant (SDD) matrix.
\end{remark}
\begin{example}\label{ex:determinant}
Let $k$ be an algebraically closed field, and let $G:=\SL_2$ act
on $X:=\Mat_2$ by left multiplication. By the first fundamental theorem of invariant theory for $\SL_2$ (see, e.g., \cite[\S 3]{MR979587}, or \cite[Thm. 3.2]{MR485944}, or \cite[Thm. 3.1]{MR422314}) we have $k[X]^G=k[\det]$. Thus, taking determinants defines a good quotient $\pi=\det: X \to \bbA^1$. Consider the cocharacter $\lambda: \Gm \to G$ sending $t \mapsto \diag(t,t^{-1})$. We compute
\[
F = X_\lambda = \left\{
\begin{pmatrix}
0 & 0 \\
0 & 0
\end{pmatrix}
\right\} \text{ and } C_{\lambda,F} = \left\{
\begin{pmatrix}
a & b \\
0 & 0
\end{pmatrix}: a,b \in k
\right\}.
\]
Its $G$-saturation is the locus of rank $\leq 1$ matrices $D:=G.C_{\lambda,F} = \mathbf{V}(\det)$. This is a reduced prime effective Cartier divisor on $X$, but not removable for $\pi$. Indeed, since the determinant function $\det$ is $G$-invariant, we have
\[
\wt_\mu(\cO_X(D)|_E) = \wt_\lambda(\cO_X(D)|_F)=0
\]
for any $\mu \in X_*(G)$ and $E \in \pi_0(X_\mu)$ with $D=G.C_{\mu,E}$, as every non-trivial cocharacter of $\SL_2$ is conjugate to a positive multiple of $\lambda$, and every corresponding $G$-saturation is $D$. The negative-weight condition in \Cref{thm:quotient-boundary} fails.

The non-removability of $D$ can also be seen directly: removing $D$ gives the good quotient
\[
\pi=\det: X \setminus D \simeq \GL_2 \to \Gm.
\]
\end{example}
\subsection{Example of torus case}
As an explicit case illustrating \Cref{thm:quotient-boundary}, we examine the torus action. For a torus $\bbT$, the connected component $C_{\lambda,F}$ is $\bbT$-invariant. Moreover, the weight condition can be read off from the dimension of a smooth attracting cell, and this is the only additional input beyond the reductive case.
\begin{lemma}\label{lem:torus-cell}
Let $k$ be an algebraically closed field, let $\bbT$ be a torus over $k$, and let $X$ be a smooth $k$-variety with a $\bbT$-action. Suppose that $X$ admits a good quotient $q: X \to X/\bbT$. Let $\lambda \in X_*(\bbT)$ and $F \in \pi_0(X_\lambda)$. If $C_{\lambda,F}$ has codimension one in $X$, then it is a $\bbT$-invariant smooth prime Cartier divisor on $X$ and $\wt_\lambda(\cO_X(C_{\lambda,F})|_F)<0$.
\end{lemma}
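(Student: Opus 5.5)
We need to show three things about $C_{\lambda,F}$: it is $\bbT$-invariant, it is a smooth prime Cartier divisor, and its weight on $F$ is negative. Invariance and the divisor properties are formal. The weight computation is done directly on the normal bundle at a point of $F$.

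\emph{Invariance and the divisor properties.} Since $\bbT$ is commutative, $L_\lambda=P_\lambda^+=\bbT$. Hence $\bbT$ acts on $X_\lambda^+$, and being connected it preserves each connected component, so $C_{\lambda,F}$ is $\bbT$-invariant. Because $X$ admits a good quotient, \Cref{rmk:ev1-closed-immersion} identifies $C_{\lambda,F}$ with a closed subscheme of $X$. By \cite[Prop. 1.4.20]{Drinfeld} it is smooth, and being connected and smooth it is irreducible. So under the codimension-one hypothesis it is a smooth prime divisor. Since $X$ is smooth, hence locally factorial, it is Cartier; set $D:=C_{\lambda,F}$ and $\cL:=\cO_X(D)$.

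\emph{The weight.} Pick a closed point $x_0\in F\subseteq D$. The fibre $\cL|_{x_0}$ is $\lambda$-equivariantly identified with the normal line $\cN_{D/X}|_{x_0}=T_{x_0}X/T_{x_0}D$. So it suffices to compute the $\lambda$-weight on this quotient.

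By the Bia\l{}ynicki-Birula description of smooth attractors \cite[Prop. 1.4.20]{Drinfeld}, $T_{x_0}X$ splits into $\lambda$-weight spaces, and $T_{x_0}C_{\lambda,F}$ is exactly the subspace of those weights that contract under $t\to 0$, namely the weights $\ge 0$ or $\le 0$ depending on the convention. Here $a$ has weight $-1$, so the contracting weights are the nonpositive ones in the paper's normalization. The quotient $T_{x_0}X/T_{x_0}D$ is one-dimensional because $D$ has codimension one. It is the unique expanding weight line, so its weight $w$ is nonzero with sign opposite to the contracting directions.

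\emph{Main obstacle: the sign convention.} The real work is matching the sign of $w$ with the convention for $\wt(\cL|_0)$ on $\Theta$. I would fix it with the model example $X=\bbA^1$, $\lambda$ acting with weight $1$ on the coordinate $x$. Here $C_{\lambda,F}=\{0\}=F$, and $\cO(D)$ is generated by $1/x$, which has the weight opposite to that of $x$.

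The check is quickest via \Cref{lem:theta-weight} and \Cref{cor:negative-in-D}. Pulling back along the trivial map $f:\Theta\to \mathrm{B}\Gm\to [X/\bbT]$ at $x_0$, the canonical section $s$ of $\cL$ vanishes on $D$. If the weight were $\ge 0$, \Cref{lem:theta-weight} would need a nonvanishing section at $0$ to be compatible with the expanding direction.

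More concretely, take $y\in X\setminus D$ near $x_0$ in the expanding direction. Its orbit under $\lambda^{-1}$ converges to $x_0$, and it is not contained in $D$. This gives $f:\Theta\to[X/\bbT]$ for the cocharacter $\lambda^{-1}$ with $f^*s\neq0$ and $f^*s$ vanishing at $0$. By \Cref{lem:theta-weight}, $\wt_{\lambda^{-1}}(\cL|_{x_0})>0$, so $\wt_\lambda(\cL|_F)<0$.

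This last argument is the one I would actually write, since it avoids tracking sign conventions. The only point to verify is the existence of $y$. It exists because the expanding weight line of $T_{x_0}X$ is nonzero, which by the Bia\l{}ynicki-Birula decomposition for $\lambda^{-1}$ near $x_0$ gives a repelling cell $X^-_{\lambda}$ through $x_0$ transverse to $D$.
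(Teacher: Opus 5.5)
Your final argument is correct. It reaches the sign by a different route from the paper's, and you should drop one side remark, explained at the end.

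\textbf{How the two proofs fix the sign.} The paper reads the sign off directly. By \cite[Prop. 1.4.11 (vi)]{Drinfeld}, $\rmT_xC_{\lambda,F}=(\rmT_xX)_{\lambda,\geq 0}$ at $x\in F$. Since $\cO_X(C_{\lambda,F})|_{C_{\lambda,F}}$ is the normal bundle, its fibre at $x$ is the line $\rmT_xX/(\rmT_xX)_{\lambda,\geq 0}$, which has negative $\lambda$-weight. This is a one-line argument, but it implicitly matches the weight convention on tangent vectors with the one used for $\wt_\lambda$ of line-bundle fibres.

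You never compare conventions. Instead you use the opposite attractor $C_{-\lambda,F}$:
\begin{enumerate}
\item $\rmT_{x_0}C_{\lambda,F}$ and $\rmT_{x_0}C_{-\lambda,F}$ together span $\rmT_{x_0}X$, and the first is a hyperplane.
\item Hence the smooth, and so reduced, cell $C_{-\lambda,F}$ is not contained in $D$.
\item Any $y\in C_{-\lambda,F}\setminus D$ gives $f\colon\Theta\to[X/\bbT]$ along $\lambda^{-1}$ with $f^*s\neq 0$ and $f^*s$ vanishing at $0$.
\item Then \Cref{lem:theta-weight} and \eqref{eq:comparing-weight} give $\wt_{\lambda^{-1}}(\cL|_F)>0$, so $\wt_\lambda(\cL|_F)<0$.
\end{enumerate}

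\textbf{What each route buys.} In your route the sign comes from the same test the paper uses in \Cref{cor:weight-inclusion} and \Cref{thm:quotient-boundary}. It is therefore automatically consistent with how the weight is used downstream. You also no longer need the identification $\cL|_{x_0}\simeq \rmT_{x_0}X/\rmT_{x_0}D$ at all. The cost is applying the Bia\l{}ynicki-Birula tangent-space description twice, for $\lambda$ and for $\lambda^{-1}$, plus the step from tangent spaces to non-containment. The paper's route needs the description only once.

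\textbf{Small cleanups.}
\begin{enumerate}
\item $y$ need only have its $\lambda^{-1}$-limit somewhere in $F$, not at $x_0$ itself, since the weight is constant on $F$.
\item Drop the sentence beginning ``If the weight were $\ge 0$''; it is not an argument.
\item Drop the remark that contracting weights are ``nonpositive in the paper's normalization.'' On tangent vectors the paper writes $(\rmT_xX)_{\lambda,\geq 0}$, and your final argument uses neither claim.
\end{enumerate}
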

\begin{proof}
The fixed and attracting loci of a smooth scheme are smooth \cite[Prop. 1.4.20]{Drinfeld}. Then the codimension-one connected component $C_{\lambda,F}$ is a smooth prime Cartier divisor on $X$. Choose a $\lambda$-fixed point $x \in F$. The tangent spaces at $x$ satisfy \cite[Prop. 1.4.11 (vi)]{Drinfeld}
\[
\rmT_xC_{\lambda,F}=(\rmT_xX)_{\lambda,\geq 0}.
\]
The fiber of the normal line bundle $\cO_X(C_{\lambda,F})|_{C_{\lambda,F}}$ at $x$ is the quotient $\rmT_xX/(\rmT_xX)_{\lambda,\geq 0}$, which is one-dimensional and has negative $\lambda$-weight. This finishes the proof.
\end{proof}

Hereafter we consider the $\Gm$ case. Let $k$ be an algebraically closed field, and let $X$ be a smooth $k$-variety with a $\Gm$-action. Suppose $X$ admits a good quotient $q: X \to X/\Gm$.  Every non-trivial cocharacter of $\Gm$ is a positive multiple of $\pm \id$, and a positive multiple does not change the fixed or attracting locus \cite[Prop. 5.24]{Luna-slice-for-stacks}. So it suffices to consider the cocharacters $\pm \id$. Denote their attracting loci by $X^\pm$, and write
\[
X^\Gm:=X_{\id}=X_{-\id}
\]
for their common fixed locus. Write
\[
X^\Gm = \bigsqcup_{i \in \pi_0(X^\Gm)} X_i \text{ and } X^\pm = \bigsqcup_{i \in \pi_0(X^\Gm)} X_i^\pm,
\]
for the decomposition into connected components with $X_i^\pm := (\ev_0^\pm)^{-1}(X_i)$. In this case, all possible $\bbT$-saturations $\bbT.C_{\lambda,F}=C_{\lambda,F}$ are precisely $\{X_i^\pm: i \in \pi_0(X^\Gm)\}$. By \Cref{cor:finite-reductive} and \Cref{lem:torus-cell} we have
\[
\PD(\pi)=\{X_i^+: \codim_X X_i^+=1\} \cup \{X_i^-: \codim_X X_i^-=1\}
\]
Let $\PD^\pm:=\{i \in \pi_0(X^\Gm): \codim_X X_i^\pm=1\} \subseteq \pi_0(X^\Gm)$.
\begin{theorem}\label{thm:main-Gm}
Let $k$ be an algebraically closed field, and let $X$ be a smooth $k$-variety with a $\Gm$-action. Suppose that $X$ admits a good quotient $q: X \to X/\Gm$. Then the assignment $(\sigma^+,\sigma^-) \mapsto X \setminus \cup_{i \in \sigma^\pm} X_i^\pm$ defines a bijection between
\[
\left\{ 
(\sigma^+,\sigma^-): \sigma^\pm \subseteq \PD^\pm, \sigma^+ \cap \sigma^-=\varnothing 
\right\} 
\xleftrightarrow{1:1} \Op(q).
\]
\end{theorem}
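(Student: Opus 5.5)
The plan is to specialize \Cref{cor:finite-reductive} (with $G=\Gm$, $\bbT=\Gm$) using the explicit description of $\PD(\pi)$ given just above. By that corollary, $\Op(q)$ is in order-reversing bijection with admissible subsets of $\PD(\pi)=\{X_i^+: i\in\PD^+\}\cup\{X_i^-: i\in\PD^-\}$. So it suffices to show that a subset $\sigma=\sigma^+\sqcup\sigma^-$ (with $\sigma^\pm\subseteq\PD^\pm$, recording which $X_i^+$ and which $X_j^-$ are removed) is admissible if and only if $\sigma^+\cap\sigma^-=\varnothing$. Along the way we also check that the map is well defined: the divisors $X_i^+$ and $X_j^-$ are pairwise distinct primes. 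For the same sign this holds because they are disjoint components of $X^\pm$. For $X_i^+$ versus $X_j^-$, an equality $X_i^+=X_j^-$ would make this divisor both $\Gm$-attracting and repelling. That contradicts the sign of the normal weight computed in \Cref{lem:torus-cell}: the weight along $X_i$ is negative for $\id$, hence positive for $-\id$.

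For admissibility, the only choices of $(\lambda,F)$ presenting $X_i^+$ as $C_{\lambda,F}$ are $(\id,X_i)$, up to positive multiples, which do not change signs. So the condition at $X_i^+$ is $\wt_{\id}(\cL_\sigma|_{X_i})<0$, and similarly at $X_j^-$ it is $\wt_{-\id}(\cL_\sigma|_{X_j})<0$. Following \Cref{rmk:weight-computation-torus}, we expand $\wt_{\id}(\cL_\sigma|_{X_i})$ as a sum of contributions $\wt_{\id}(\cO(D)|_{X_i})$ over $D\in\sigma$, and compute each term.
\begin{enumerate}
\item The term for $D=X_i^+$ is $-1$. By \Cref{lem:torus-cell} the normal weight is negative, and it equals exactly $-1$: the normal space is one-dimensional in $(\rmT_xX)_{<0}$, and since $X_i^+$ has codimension one, $(\rmT_xX)_{<0}$ is this line.
\item The term for $D=X_k^\pm$ with $k\neq i$ is $0$ by \Cref{cor:weight-inclusion}(ii), since $X_k^\pm\cap X_i=\varnothing$ (as $X_k^\pm\cap X^\Gm=X_k$).
\item The term for $D=X_i^-$ is the one real computation. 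At $x\in X_i$ we have $\rmT_xX_i^-=(\rmT_xX)_{\le 0}$, so the normal line of $X_i^-$ is a weight space of positive $\id$-weight. If both $X_i^+$ and $X_i^-$ have codimension one, then $\rmT_xX=(\rmT_xX)_0\oplus L_-\oplus L_+$ with one-dimensional lines of weights $-a$ and $b$ with $a,b>0$. Step (i) gives $a=1$, and the same argument for $-\id$ gives $b=1$, so $\wt_{\id}(\cO(X_i^-)|_{X_i})=+1$.
\end{enumerate}

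Summing, $\wt_{\id}(\cL_\sigma|_{X_i})=-1+[i\in\sigma^-]$, which is negative exactly when $i\notin\sigma^-$. Symmetrically, the condition at $X_j^-$ holds exactly when $j\notin\sigma^+$. Hence $\sigma$ is admissible if and only if $\sigma^+\cap\sigma^-=\varnothing$, and combining this with \Cref{cor:finite-reductive} gives the stated bijection.

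The main obstacle is making the weight computations in (i) and (iii) precise. One must identify $\wt(\cO_X(C)|_F)$ with the weight of the normal line $\rmT_xX/\rmT_xC$; this holds because $\cO(C)|_C$ is the normal bundle. One must also check the sign convention: with $a$ of weight $-1$, the attractor's tangent space is $(\rmT_xX)_{\ge0}$ as in \Cref{lem:torus-cell}, and the argument only needs that the two normal lines carry opposite signs. The pure $\pm1$ normalisation relies on codimension one forcing the whole negative (respectively positive) weight part to be a single line of weight $\mp1$. If the actual weight had some other magnitude, only the signs would be used: then $\wt=-a+b$ when both $X_i^\pm$ are removed. To handle that case I would instead use $\lambda=-b\cdot\id$ versus $a\cdot\id$ and argue via the SDD criterion. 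Nonnegativity from \Cref{cor:weight-inclusion} would still give the "only if" direction, but equality $a=b$ is needed for the "if" direction, so confirming the weights $\pm1$ is the crux.
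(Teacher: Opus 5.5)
\textbf{Gap: the normalization $a_i=b_i=1$ in steps (i) and (iii) is false.} Your overall route is the paper's: specialize \Cref{cor:finite-reductive}, kill the off-diagonal terms with \Cref{cor:weight-inclusion}(ii), and read off signs from \Cref{lem:torus-cell}. Your last paragraph says the argument hinges on this normalization.

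Codimension one only tells you that the normal line of $X_i^+$ at $x\in X_i$ is all of $(\rmT_xX)_{<0}$. A one-dimensional $\Gm$-representation can have any non-zero weight, so nothing forces that weight to be $-1$.

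For example, take $t.(x,y)=(t^2x,t^{-1}y)$ on $\bbA^2$, with good quotient $(x,y)\mapsto xy^2$. The fixed locus is the origin, and both coordinate axes are divisorial attracting loci, one for $\id$ and one for $-\id$. Their normal lines at the origin have weights of absolute value $1$ and $2$. So in general neither $a_i=b_i=1$ nor even $a_i=b_i$ holds.

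As a consequence, your decoupled claim that the condition at $X_i^+$ holds exactly when $i\notin\sigma^-$ is false. If $i\in\sigma^+\cap\sigma^-$ and $b_i<a_i$, then $\wt_{\id}(\cL_\sigma|_{X_i})=b_i-a_i<0$ does hold.

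\textbf{The fix: only the signs $a_i,b_i>0$ are needed.} Your diagnosis that $a_i=b_i$ is required for the ``if'' direction is mistaken.

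For the ``if'' direction, suppose $\sigma^+\cap\sigma^-=\varnothing$. Then for each $i\in\sigma$ only one of $X_i^\pm$ is removed. The relevant weight is $-a_i$ (for $\id$, when $i\in\sigma^+$) or $-b_i$ (for $-\id$, when $i\in\sigma^-$), and both are negative. The numbers $a_i$ and $b_i$ never appear together.

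For the ``only if'' direction, do not examine the two conditions one at a time. If $i\in\sigma^+\cap\sigma^-$, admissibility requires simultaneously $\wt_{\id}(\cL_\sigma|_{X_i})=b_i-a_i<0$ and $\wt_{-\id}(\cL_\sigma|_{X_i})=a_i-b_i<0$. This is impossible for any values of $a_i,b_i$. This is exactly the paper's argument.

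Your fallback of passing to $a\cdot\id$ and $-b\cdot\id$ would not help anyway. Positive multiples of a cocharacter have the same fixed and attracting loci and rescale all weights uniformly, so they leave the admissibility conditions unchanged.

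Your check that the $X_i^\pm$ are pairwise distinct primes, via opposite signs of the normal weights, is fine; the paper leaves it implicit.
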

\begin{proof}
For any subset $\sigma \subseteq \{1,\ldots,r\}$, define $\sigma^\pm:=\sigma \cap \PD^\pm \subseteq \PD^\pm$. By \Cref{cor:finite-reductive}, we have a bijection
\[
\left\{
\sigma \subseteq \{1,\ldots,r\}: \wt_{\pm\id}(\cL_\sigma|_{X_i})<0 \text{ for } i \in \sigma^\pm
\right\}
\leftrightarrow 
\left\{
\Gm\text{-invariant opens of } X \text{ preserving } \pi
\right\}.
\]
It remains to show the weight condition on the left-hand side is equivalent to $\sigma^+ \cap \sigma^- = \varnothing$. Indeed, for each $i \in \PD^+ \cup \PD^-$, write
\[
\wt_{\pm \id}(\cO_X(X_i^+)|_{X_i})=\mp a_i \text{ and } \wt_{\pm \id}(\cO_X(X_i^-)|_{X_i})=\pm b_i,
\]
for some integers $a_i, b_i>0$. For $j \neq i$, the fixed component $X_i$
is disjoint from $X_j^\pm$. Hence, by \Cref{cor:weight-inclusion},
\[
\wt_{\pm \id}(\cO_X(X_j^\pm)|_{X_i}) = 0.
\]
Therefore, by \Cref{rmk:weight-computation-torus}, for each $i \in \sigma = \sigma^+ \cup \sigma^-$ we have
\begin{align*}
\wt_{\pm \id}(\cL_\sigma|_{X_i})&=\sum_{j \in \sigma^+}\wt_{\pm \id}(\cO_X(X_j^+)|_{X_i}) + \sum_{j \in \sigma^-} \wt_{\pm \id}(\cO_X(X_j^-)|_{X_i}) \\
&=\mp \mathbf{1}_{\sigma^+}(i) \cdot a_i \pm \mathbf{1}_{\sigma^-}(i) \cdot b_i.
\end{align*}
If $\sigma^+ \cap \sigma^- = \varnothing$, then
\[
\wt_{\id}(\cL_\sigma|_{X_i})=-a_i<0 \text{ for every } i \in \sigma^+,
\]
and
\[
\wt_{-\id}(\cL_\sigma|_{X_i})=-b_i<0 \text{ for every } i \in \sigma^-.
\]
Conversely, if $i \in \sigma^+ \cap \sigma^-$, the two required
inequalities are
\[
\wt_{\id}(\cL_\sigma|_{X_i})=-a_i+b_i<0 \text{ and } \wt_{-\id}(\cL_\sigma|_{X_i})=a_i-b_i<0,
\]
and they cannot hold simultaneously. Thus the weight inequalities $\wt_{\pm \id}(\cL_\sigma|_{X_i})<0$ hold for $i \in \sigma^\pm$ if and only if $\sigma^+ \cap \sigma^-=\varnothing$.
\end{proof}
\begin{example}\label{ex:axes}
Let $\bbT:=\mathbb{G}_m^r$ act on $\bbA^{2r}$ by $(t_i)_i.(x_i,y_i)_i=(t_ix_i,t_i^{-1}y_i)_i$. The good quotient is $q: \bbA^{2r} \to \bbA^r$ sending $(x_i,y_i)_i \mapsto (x_iy_i)_i$. For each $i$, let $\lambda_i: \Gm \to \bbT$ be the $i$-th coordinate cocharacter. Both $\pm \lambda_i$ have fixed locus $F_i=\mathbf{V}(x_i,y_i)$, and
\[
D_i^+:=G.C_{\lambda_i,F_i}=\mathbf{V}(y_i) \text{ and } D_i^-:=G.C_{-\lambda_i,F_i}=\mathbf{V}(x_i).
\]
A cocharacter with $s$ non-zero coordinates has attracting locus of codimension $s$, so these are the only divisorial attracting images. Ordering the signs as $+,-$, the weight matrix is
\[
\left(
\wt_{\varepsilon\lambda_i}
\bigl(\cO_X(D_j^\delta)|_{F_i}\bigr)
\right)_{(i,\varepsilon),(j,\delta)}
=
\mathbb{I}_r \otimes
\begin{pmatrix}
-1 & 1 \\
1 & -1
\end{pmatrix}.
\]
By \Cref{thm:quotient-boundary} and \Cref{cor:finite-reductive}, $\PD(q)=\{D_i^\pm: 1 \leq i \leq r\}$, and the preserving opens are exactly the complements of unions
containing at most one divisor from each pair $(D_i^+,D_i^-)$. Thus $|\PD(q)|=2r$ and $|\Op(q)|=3^r$.
\end{example}
The following corollary shows that \Cref{ex:axes} attains the maximal
number of preserving opens for a $\Gm$-action on affine space.

\begin{corollary}\label{cor:affine-space-three-opens}
Let $k$ be an algebraically closed field. Let $\Gm$ act on the affine space $X:=\bbA^n$ and let $q: X \to X/\Gm$ be the good quotient. Then $|\Op(q)| \leq 3$. The upper bound is sharp for every $n \geq 2$.
\end{corollary}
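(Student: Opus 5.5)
The plan is to reduce the count to the combinatorics of \Cref{thm:main-Gm}. By that theorem, $\Op(q)$ is in bijection with pairs $(\sigma^+,\sigma^-)$ of disjoint subsets with $\sigma^\pm\subseteq\PD^\pm\subseteq\pi_0(X^\Gm)$. It therefore suffices to show that the fixed locus $X^\Gm$ is non-empty and connected, i.e.\ $|\pi_0(X^\Gm)|=1$. Then $\PD^+\cup\PD^-$ is contained in a single index $\{i\}$, so the admissible pairs are among $(\varnothing,\varnothing)$, $(\{i\},\varnothing)$ and $(\varnothing,\{i\})$, since the pair $(\{i\},\{i\})$ is excluded by disjointness. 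This gives $|\Op(q)|\le 3$.

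For sharpness with $n\ge 2$, take the linear action with weights $(1,-1,0,\dots,0)$ on $\bbA^n$. This is \Cref{ex:axes} with $r=1$, multiplied by a trivially acted $\bbA^{n-2}$. The fixed locus is $\mathbf V(x_1,x_2)\simeq\bbA^{n-2}$, which is connected. The attractors $X^+=\mathbf V(x_2)$ and $X^-=\mathbf V(x_1)$ are both of codimension one. Hence $\PD^\pm=\{i\}$, and all three opens occur: $X$, $X\setminus\mathbf V(x_1)$ and $X\setminus\mathbf V(x_2)$. One can also check this directly, since $k[x_1,x_2,\ldots]^{\Gm}=k[x_1x_2,x_3,\ldots,x_n]$ is unchanged after inverting $x_1$ or $x_2$ alone.

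The main obstacle is connectedness and non-emptiness of $X^\Gm$ for an arbitrary, possibly non-linear, $\Gm$-action on $\bbA^n$. If the action were linearizable this would be immediate, since $X^\Gm$ would then be a linear subspace, but linearizability is not available in general. I would instead use étale Smith theory. Fix a prime $\ell\neq\operatorname{char}k$ and consider the subgroup $\mu_\ell\subseteq\Gm$. Since $\bbA^n$ is $\bbF_\ell$-acyclic for compactly supported étale cohomology, Smith theory gives that $X^{\mu_\ell}$ is also $\bbF_\ell$-acyclic. In particular it is non-empty (its Euler characteristic is $1$) and connected ($H^0=\bbF_\ell$).

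It then remains to compare $X^{\mu_\ell}$ with $X^\Gm$. For this I would use that $A=k[X]$ is finitely generated and graded by finitely many weights $d_1,\dots,d_m$ of a set of homogeneous generators. The locus $X^{\mu_\ell}$ is cut out by the generators whose weight $d_j$ is not divisible by $\ell$. The locus $X^\Gm$ is cut out by all generators of non-zero weight. Choosing $\ell$ larger than every $|d_j|$, the two ideals coincide, so $X^{\mu_\ell}=X^\Gm$ and connectedness follows. If the Smith-theory input proves awkward in positive characteristic, a fallback is the Białynicki-Birula decomposition in characteristic $0$, combined with the contractibility of $\bbA^n$ over $\bbC$ and a Lefschetz-principle reduction.
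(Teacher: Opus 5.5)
Your proposal is correct and follows essentially the paper's route. You reduce via \Cref{thm:main-Gm} to showing that $X^\Gm$ has at most one connected component, and identify $X^{\mu_\ell}=X^\Gm$ for a prime $\ell\neq\operatorname{char}k$ exceeding all weights; your homogeneous generators play the role of the paper's equivariant embedding $X\hookrightarrow V$. You then apply \'{e}tale Smith theory (Symonds), and get sharpness from \Cref{ex:axes}, where your product with a trivially acted $\bbA^{n-2}$ usefully makes the case $n\geq 3$ explicit.

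One small slip: compactly supported acyclicity gives $\sum_i\dim_{\bbF_\ell} H^i_c(X^\Gm,\bbF_\ell)=1$, not $H^0=\bbF_\ell$. So connectedness should be deduced as in the paper, from the fact that every connected component $Z$ contributes the non-zero group $H_c^{2\dim Z}(Z,\bbF_\ell)$.
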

\begin{proof}
We first show the fixed locus $X^\Gm$ is non-empty and connected. If the $\Gm$-action on $\bbA^n$ is linear, this can be seen directly using coordinates. In general, choose a $\Gm$-equivariant closed immersion $X \hookrightarrow V$ into a finite-dimensional $\Gm$-representation. Choose a prime $\ell \neq \mathrm{char}(k)$ larger than the absolute values of all non-zero $\Gm$-weights of $V$. Then $V^{\mu_\ell}=V^\Gm$ and hence $X^{\mu_\ell}=X^\Gm$. Since $\mu_\ell(k)$ is cyclic of order $\ell$, applying \cite[Cor. 4.3]{Symonds-Gm} to $X$ gives
\[
\sum_{i \geq 0} \dim_{\bbF_\ell} H_c^i(X^\Gm,\bbF_\ell)=1,
\]
where $H_c^i$ denotes étale cohomology with compact supports. Each $Z \in \pi_0(X^\Gm)$ contributes a non-zero group $H_c^{2\dim Z}(Z,\bbF_\ell)$. The equality therefore implies that $X^\Gm \neq \varnothing$ and has exactly one connected component. 

By \Cref{thm:main-Gm}, the preserving
opens are $X$, or $X\setminus X_{\id}^+$ if $\codim_X X_{\id}^+=1$, or $X\setminus X_{\id}^-$ if $\codim_X X_{\id}^-=1$. The two attracting divisors cannot be removed simultaneously. This proves the upper bound. Sharpness follows from \Cref{ex:axes}.
\end{proof}
\section{Locally reductive stacks}\label{sec:locally-reductive}
To extend the results on quotient stacks to a broader class of stacks, we begin by observing that, for an adequate moduli space $\pi: \cX \to X$, the property of preserving $\pi$ can be checked \'{e}tale locally on $X$.
\begin{proposition}\label{prop:chart-test}
Let $\pi: \cX \to X$ be an adequate moduli space. Suppose that there is an \'{e}tale cover $\{X_i \to X\}_i$ of $X$ by affine schemes, and over each $X_i$ an isomorphism
\[
\cX \times_X X_i \simeq [V_i/G_i],
\]
where $V_i \to X_i$ is affine and $G_i \to X_i$ is a reductive group scheme. For an open substack $\cU \subseteq \cX$, let $W_i:=\cU \times_{\cX} V_i \subseteq V_i$. Then $\cU$ preserves $\pi$ if and only if
\begin{equation}\label{eq:chart-test}
W_i \text{ is affine, and } \Gamma(V_i,\cO_{V_i})^{G_i} \xrightarrow{\sim} \Gamma(W_i,\cO_{W_i})^{G_i} \text{ for each } i.
\end{equation}
Conversely, a family of $G_i$-invariant open subsets $W_i \subseteq V_i$ satisfying \eqref{eq:chart-test} uniquely determines an open substack of $\cX$ preserving $\pi$, provided these open subsets $W_i \subseteq V_i$ agree after pullback to each $V_i \times_{\cX} V_j$.
\end{proposition}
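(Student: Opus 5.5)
The plan is to reduce everything to the charts. The key input is that being an adequate moduli space is étale local on the target. Adequate moduli spaces are stable under flat base change, and conversely the two defining conditions descend along étale covers (\cite[Prop. 5.2.9]{AlperAdequate}). These conditions are adequate affineness of $\pi|_{\cU}$ and $\cO_X \xrightarrow{\sim} (\pi|_{\cU})_*\cO_{\cU}$. So $\cU$ preserves $\pi$ if and only if, for each $i$, the open $\cU_i := \cU \times_X X_i$ preserves the base change $\pi_i: \cX_i:=\cX \times_X X_i \to X_i$. Under the identification $\cX_i \simeq [V_i/G_i]$, the open $\cU_i$ corresponds to $[W_i/G_i]$, with $W_i$ the $G_i$-invariant preimage. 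It therefore suffices to treat a single chart, with $X_i$ affine, $V_i$ affine over $X_i$ (hence affine), and $G_i$ reductive.

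On one chart I will argue in two directions. Suppose $[W_i/G_i] \to X_i$ is an adequate moduli space. Then \Cref{lem:affine-open} applies: $[V_i/G_i]$ has affine diagonal, being a quotient of an affine scheme by an affine group scheme. So the inclusion $[W_i/G_i] \hookrightarrow [V_i/G_i]$ is affine, and pulling back along $V_i \to [V_i/G_i]$ shows that $W_i \hookrightarrow V_i$ is affine. Hence $W_i$ is affine. Taking global sections of $\cO_{X_i} \xrightarrow{\sim} (\pi_i|_{\cU_i})_*\cO$ over the affine $X_i$ gives $\Gamma(V_i,\cO)^{G_i} = \Gamma(X_i,\cO) \xrightarrow{\sim} \Gamma(W_i,\cO)^{G_i}$.

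For the converse on one chart, suppose $W_i$ is affine with the same invariants. Then $[W_i/G_i] \to X_i$ factors through $\Spec \Gamma(W_i,\cO)^{G_i}$. By adequate moduli theory for a reductive group scheme acting on an affine scheme (\cite[Thm. 9.1.4]{AlperAdequate}), $[W_i/G_i] \to \Spec\Gamma(W_i,\cO)^{G_i}$ is an adequate moduli space. By hypothesis the target is $\Spec\Gamma(V_i,\cO)^{G_i}=X_i$, and the latter identification holds because $[V_i/G_i]\to X_i$ is itself an adequate moduli space over an affine base. Alternatively, \Cref{cor:affine-test} gives the same conclusion: affineness of $W_i \hookrightarrow V_i$ descends to affineness of the stacky open immersion, and the invariant condition is exactly the pushforward condition evaluated on affine $X_i$. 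To make this work I must also check that the pushforward condition can be tested on global sections, which holds since $X_i$ is affine and the sheaves are quasi-coherent.

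For the final gluing statement, I will use that open substacks of $\cX$ form a sheaf for the étale (indeed smooth) topology. The $V_i \to \cX_i \to \cX$ are smooth, and jointly surjective because $\{X_i\}$ covers $X$ and $\pi$ is surjective. So compatible $G_i$-invariant opens $W_i \subseteq V_i$ that agree on $V_i\times_\cX V_j$ descend to a unique open $\cU \subseteq \cX$ with $\cU \times_\cX V_i = W_i$. The first part then shows that $\cU$ preserves $\pi$.

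The main obstacle I expect is the étale-local descent of the adequate moduli property. In particular, I need to verify that adequate affineness descends along the étale cover $\{X_i \to X\}$, and this must be done with the correct reference or argument from \cite{AlperAdequate}. The remaining steps are formal.
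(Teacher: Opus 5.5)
Your proposal is correct and is essentially the paper's argument: base change to the charts, then show $W_i$ is affine via the Serre-type criterion \cite[Thm. 4.3.1]{AlperAdequate}, then read off the invariants over the affine $X_i$; for the converse, use that $[W_i/G_i] \to \Spec \Gamma(W_i,\cO_{W_i})^{G_i}$ is an adequate moduli space (the paper cites \cite[Rmk. 9.1.5]{AlperAdequate} for this), together with descent of open substacks. The only difference is packaging: you invoke \Cref{lem:affine-open} on $[V_i/G_i]$, whose diagonal is automatically affine, whereas the paper applies the criterion directly to the adequately affine composite $W_i \to [W_i/G_i] \to X_i$; and the step you flag as the main obstacle is exactly \cite[Prop. 5.2.9 (2)]{AlperAdequate}, which the paper also uses.
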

\begin{proof}
Suppose an open substack $\cU \subseteq \cX$ preserves $\pi$. Then $\cU \hookrightarrow \cX$ is affine by \Cref{lem:affine-open} and hence the composition $\cU \hookrightarrow \cX \to X$ is adequately affine. Consider the following Cartesian diagram
\[
\begin{tikzcd}
W_i \ar[r,two heads] \ar[d,hook] & \left[ W_i/G_i \right] \ar[r] \ar[d,hook] & \cU \ar[d,hook] \\
V_i \ar[r,two heads] & \left[ V_i/G_i \right] \ar[r] \ar[d,"\pi_i"',two heads] & \cX \ar[d,"\pi",two heads] \\
& X_i \ar[r] & X
\end{tikzcd}
\]
The $G_i$-torsor $W_i \twoheadrightarrow [W_i/G_i]$ is affine as $G_i$ is affine, and $[W_i/G_i] \to X_i$ is an adequate moduli space as a flat base change of $\pi|_\cU: \cU \to X$. So the composition $W_i \to X_i$ is adequately affine, hence affine by \cite[Thm. 4.3.1]{AlperAdequate}. In particular, $W_i$ is affine. The condition that $[W_i/G_i] \to X_i$ is an adequate moduli space is precisely the isomorphism in \eqref{eq:chart-test}, using the isomorphism $\Gamma(X_i,\cO_{X_i}) \xrightarrow{\sim} \Gamma(V_i,\cO_{V_i})^{G_i}$.

Conversely, since $W_i$ is affine, the morphism $[W_i/G_i] \to \Spec \Gamma(W_i,\cO_{W_i})^{G_i}$ is an adequate moduli space \cite[Rmk. 9.1.5]{AlperAdequate}, and the isomorphism in \eqref{eq:chart-test} identifies that with $X_i$. So each open substack $[W_i/G_i] \subseteq [V_i/G_i]$ preserves $\pi_i$. This compatible family of open substacks $[W_i/G_i]$ descends uniquely along the \'{e}tale cover $\{[V_i/G_i] \to \cX\}$ to an open substack $\cU \subseteq \cX$, whose pullbacks to each $V_i$ are the prescribed $W_i$. The open substack $\cU \subseteq \cX$ preserves $\pi$ as adequate moduli spaces descend along any fpqc cover \cite[Prop. 5.2.9 (2)]{AlperAdequate}.
\end{proof}
We already know that preserving opens can be checked \'{e}tale-locally, and that for quotient stacks we can characterize preserving opens (\Cref{thm:quotient-boundary}). It is therefore natural to expect that the conclusion generalizes to algebraic stacks that are \'{e}tale-locally quotient stacks; this is precisely the notion of being locally reductive, which we briefly recall here. 
\begin{definition}[Local reductivity, \cite{AHLH}, Defns. 2.1 and 2.5]\label{def:locally-reductive}
A quasi-separated algebraic stack $\cX$ with affine stabilizers is called \emph{locally reductive} if every point of $\cX$ specializes to a closed point and every closed point $x$ admits an \'etale quotient presentation, which is a pointed \'{e}tale morphism $([\Spec A/\GL_n],w) \to (\cX,x)$ that induces an isomorphism of stabilizer groups at $w$.
\end{definition}
\begin{remark}
The technical condition ``local reductivity'' emerges due to the lack of a local structure theorem of algebraic stacks in positive characteristic. For an algebraic stack $\cX$ of finite type over a field, with affine stabilizers and admitting an adequate moduli space, local reductivity is automatic in the following cases:
\begin{itemize}
\item 
The adequate moduli space of $\cX$ is actually a good moduli space (e.g. if $\cX$ is of characteristic $0$).
\item
The stack $\cX$ admits a representable morphism to a quotient stack for an action of a reductive group. Indeed, this is covered in the proof of \cite[Prop. 3.28]{dario-zhang}.
\end{itemize}
\end{remark}
The following property of locally reductive stacks is precisely what allows us to extend the results on quotient stacks to a broader class of stacks.
\begin{lemma}\label{lem:adequate-charts}
Let $\cX$ be a locally reductive algebraic stack with separated diagonal. Suppose that there is an adequate moduli space $\pi: \cX \to X$. There is an \'{e}tale cover $\{\Spec B_i \to X\}_i$ of $X$ by affine schemes and Cartesian diagrams
\[
\begin{tikzcd}
\left[ \Spec A_i/\GL_{n_i} \right] \ar[r] \ar[d,"\pi_i"'] & \cX \ar[d,"\pi"] \\
\Spec B_i \ar[r] & X
\end{tikzcd}
\]
where $\pi_i$ is an adequate moduli space, i.e., $A_i^{\GL_{n_i}} = B_i$. If $\cX$ is quasi-compact, the \'{e}tale cover of $X$ can be chosen finite.
\end{lemma}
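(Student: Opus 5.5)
The plan is to build the charts point by point from the local reductivity hypothesis, and then upgrade the resulting étale quotient presentations to Cartesian ones over $X$ using the adequate moduli space. Fix a closed point $x \in \cX$. By \Cref{def:locally-reductive} there is a pointed étale morphism $f:([\Spec A/\GL_n],w) \to (\cX,x)$ inducing an isomorphism of stabilizers at $w$. The stack $[\Spec A/\GL_n]$ has the adequate moduli space $[\Spec A/\GL_n] \to \Spec A^{\GL_n}$ \cite[Rmk. 9.1.5]{AlperAdequate}. This gives a commutative square
\[
\begin{tikzcd}
\left[\Spec A/\GL_n\right] \ar[r,"f"] \ar[d] & \cX \ar[d,"\pi"] \\
\Spec A^{\GL_n} \ar[r,"\bar f"] & X.
\end{tikzcd}
\]

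The key step is a Luna-type étale slice statement for adequate moduli spaces. If $f$ is étale, stabilizer-preserving at the closed point $w$, and $w$ is closed in its fiber, then after shrinking to a saturated affine open neighbourhood of $w$ (the preimage of an affine open $\Spec B \subseteq \Spec A^{\GL_n}$), the map $\bar f$ is étale and the square is Cartesian. In the good moduli space setting this is \cite[Prop. 3.4]{AHLH}-style ("Luna's fundamental lemma"). For adequate moduli spaces I would invoke the analogous result from \cite{AHLH}, which treats locally reductive stacks. If it is not available verbatim, I would argue as follows.
\begin{enumerate}
\item Use separated diagonal and the stabilizer isomorphism to show that $f$ is representable and quasi-finite. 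Then reduce, via Zariski's main theorem and the stabilizer condition, to showing that $f$ sends closed points to closed points near $w$.
\item Show that the locus where $f$ is "strongly étale" is saturated open.
\item Apply the criterion that a representable étale morphism which is stabilizer-preserving and sends closed points to closed points induces an étale map on moduli spaces with Cartesian square. This criterion is proved by passing to henselizations and using that adequate moduli spaces commute with flat base change.
\end{enumerate}
This step is where I expect the main obstacle. In positive characteristic, adequate moduli spaces are only universally homeomorphic after base change, not universally isomorphic, so care is needed to get the square Cartesian rather than Cartesian up to a universal homeomorphism. The separated diagonal hypothesis is what makes closed-point preservation work.

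Next I would cover $X$. Every point of $X$ is the image of a closed point of $\cX$, because $\pi$ is surjective and each point of $\cX$ specializes to a closed point. So the images $\Spec B_i \to X$ obtained above, as $x$ ranges over closed points, cover $X$. This gives the étale cover together with the Cartesian diagrams. Because the square is Cartesian, $\pi_i$ is the base change of $\pi$, so $\pi_i$ is an adequate moduli space and $A_i^{\GL_{n_i}} = B_i$ by the compatibility of adequate moduli spaces with flat base change \cite{AlperAdequate}.

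Finally, if $\cX$ is quasi-compact, then $X = \pi(\cX)$ is quasi-compact, and the open images of the étale maps $\Spec B_i \to X$ admit a finite subcover.
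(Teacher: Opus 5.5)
Your overall route matches the paper's. For each closed point $z$ you obtain a Cartesian \'etale chart $[\Spec A/\GL_n]\to\cX$ lying over an \'etale neighbourhood $g_z\colon\Spec B_z\to X$ of $\pi(z)$. You then show these neighbourhoods cover $X$, and pass to a finite subcover when $\cX$ (equivalently $X$) is quasi-compact. For the chart, the paper simply cites \cite[Prop.~2.10]{AHLH}, which is exactly the adequate-moduli-space statement you anticipate. Your fallback sketch (i)--(iii) is therefore unnecessary. On its own it would also not be a proof, since step (iii) is precisely the nontrivial content of that proposition.

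The step that fails as written is the covering argument. The claim ``every point of $X$ is the image of a closed point of $\cX$'' is false: for $\cX=X=\bbA^1_k$, the generic point of $X$ is the image of no closed point. What surjectivity of $\pi$ and local reductivity actually give is a specialization. Choose $\xi\in|\cX|$ with $\pi(\xi)=x$ and a closed point $z$ with $\xi\rightsquigarrow z$; continuity of $\pi$ then gives $x\rightsquigarrow\pi(z)$. To conclude, you need the extra observation the paper uses. The image of the \'etale map $g_z$ is open and contains $\pi(z)$, so it contains every generalization of $\pi(z)$, in particular $x$. With this repair your argument coincides with the paper's.
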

\begin{proof}
For each closed point $z \in |\cX|$, there is an \'{e}tale neighborhood $g_z: \Spec B_z \to X$ of $\pi(z)$ and a Cartesian diagram as above \cite[Prop. 2.10]{AHLH}. It remains to show the morphisms $g_z$ cover $X$. Let $x \in |X|$. Since $\pi$ is surjective, choose $\xi \in |\cX|$ with $\pi(\xi)=x$. By local reductivity of $\cX$, the point $\xi$ specializes to a closed point $z \in |\cX|$, so $x$ specializes to $\pi(z)$. The image of $g_z$ is open and contains $\pi(z)$, it therefore contains its generalization $x$. Note that $\cX$ is quasi-compact if and only if $X$ is quasi-compact. The finiteness follows.
\end{proof}
Therefore, every removable closed substack of a locally reductive algebraic stack $\cX$ is, \'{e}tale-locally, of the form $G.C_{\lambda,F}$ (\Cref{prop:chart-test}, \Cref{lem:adequate-charts} and \Cref{thm:quotient-boundary}). It remains to identify what global object is obtained by gluing these local $G$-saturations (possibly for different $G$). It turns out that the correct description uses gradings and filtrations of stacks.
\subsection{Recollections on gradings and filtrations}
Let $k$ be a field. Define $\Theta_k:=[\bbA^1_k/\mathbb{G}_{m,k}]$ and let $0:=[0/\mathbb{G}_{m,k}]=\mathrm{B}\mathbb{G}_{m,k} \in [\bbA^1_k/\mathbb{G}_{m,k}]$ be the unique closed point. For an algebraic stack $\cX$ of finite type over $k$ with affine stabilizers, define
\[
\Grad(\cX):=\Map_k(\mathrm{B}\mathbb{G}_{m,k},\cX) \text{ and } \Filt(\cX):=\Map_k(\Theta_k,\cX).
\]
These are algebraic stacks locally of finite type over $k$ \cite[Thm. 5.10]{Luna-slice-for-stacks}. For a line bundle $\cL$ on $\cX$ and a connected subset $Z \subseteq \Grad(\cX)$, define
\[
\wt(\cL|_Z):=\wt_\Gm(x^*\cL) \in \bbZ \text{ for some } (x: \mathrm{B}\Gm \to \cX) \in |Z|.
\]
This is well-defined: the discrete $\Gm$-weight is locally constant on $\Grad(\cX)$, hence constant on the connected subset $Z$. In particular, the value of $\wt_\Gm(x^*\cL)$ does not depend on the choice of $x \in |Z|$. There are maps
\[
\begin{tikzcd}
\Grad(\cX) \ar[r,bend right,"\sigma"'] & \Filt(\cX) \ar[r,"\ev_1"] \ar[l,"\gr"'] & \cX,
\end{tikzcd}
\]
given by restriction to $0$ and evaluation at $1$ respectively. Any grading $\mathrm{B}\Gm \to \cX$ has a split filtration, obtained by precomposing with the canonical projection $\Theta \to \mathrm{B}\Gm$, and this yields a section $\sigma: \Grad(\cX) \to \Filt(\cX)$ of $\gr$. 

Multiplication of the affine coordinate by a parameter contracts any filtration to its split filtration. It follows that the map $\gr: \Filt(\cX) \to \Grad(\cX)$ induces a bijection on connected components. More explicitly, for a connected component $Z \subseteq \Grad(\cX)$, every point of $\gr^{-1}(Z)$ is connected, by that contraction, to a point of $\sigma(Z)$. Hence $\gr^{-1}(Z)$ is connected. Define a subset
\begin{equation}\label{eq:AZ}
|\cA_Z| := \ev_1(|\gr^{-1}(Z)|) \subseteq |\cX|.
\end{equation}
If it is closed, let $\cA_Z$ denote the unique reduced closed substack of $\cX$ with that support. 

To see why \eqref{eq:AZ} is the correct global object, we check the case of quotient stacks.
\begin{example}
Let $k$ be a field. Let $X$ be a scheme of finite type over $k$ with an action of a smooth affine connected algebraic group $G$ over $k$ with a split maximal torus. By \cite[Thm. 1.4.8]{halpern-leistner2014instability} we have
\begin{equation}\label{eq:quotient-filtrations}
\Grad([X/G]) \simeq \coprod_{\lambda} [X_\lambda/L_\lambda] \text{ and } \Filt([X/G]) \simeq \coprod_{\lambda}[X_\lambda^+/P_\lambda^+]
\end{equation}
where $\lambda$ runs through a complete set of conjugacy classes of $X_*(G)$. In particular, any connected component $Z \subseteq \Grad([X/G])$ is of the form $[F/L_\lambda]$ for some cocharacter $\lambda \in X_*(G)$ and connected component $F \in \pi_0(X_\lambda)$. Therefore
\[
\gr^{-1}(Z)=\gr^{-1}([F/L_\lambda])=[C_{\lambda,F}/P_\lambda^+],
\]
and if $X$ admits a good quotient $X \to X/G$ (cf. \Cref{rmk:ev1-closed-immersion}), then
\[
|\cA_Z| = \ev_1(|\gr^{-1}(Z)|) = \ev_1([C_{\lambda,F}/P_\lambda^+]) = [G.C_{\lambda,F}/G] \subseteq [X/G].
\]
\end{example}
With these in hand, we can globalize the boundary criterion and finiteness result for quotient stacks in \Cref{sec:reductive} to locally reductive stacks.
\begin{setup}\label{setup:locally-reductive}
Let $k$ be an algebraically closed field, let $\cX$ be a locally reductive algebraic stack of finite type over $k$ with separated
diagonal, and let $\pi: \cX \to X$ be an adequate moduli space.
\end{setup}

\begin{lemma}[Parallel to \Cref{lem:quotient-images}]\label{prop:finite-images}
Assume the hypotheses of \Cref{setup:locally-reductive}. For every connected component $Z \in \pi_0(\Grad(\cX))$, the subset $|\cA_Z| \subseteq |\cX|$ is closed, and there are only finitely many distinct subsets of the form $|\cA_Z|$.
\end{lemma}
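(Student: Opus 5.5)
The plan is to reduce to the quotient case, \Cref{lem:quotient-images}, using the finite Cartesian chart cover of \Cref{lem:adequate-charts}. Since $\cX$ is of finite type, it is quasi-compact. So we may choose finitely many Cartesian diagrams $\cX_i:=[\Spec A_i/\GL_{n_i}] \to \cX$ over affine \'etale $\Spec B_i \to X$ with $A_i^{\GL_{n_i}}=B_i$, jointly surjective. Each $\Spec A_i$ is then an affine $k$-variety with a good quotient, at least after the usual reduction where adequate means good on the relevant geometric statement. We only need the $\Theta$-reductivity used in \Cref{rmk:ev1-closed-immersion}, and $\Theta$-reductivity holds for any stack with an adequate moduli space and separated diagonal by \cite{AHLH}.

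\textbf{Compatibility of $\Filt$ with Cartesian charts.} Write $p_i:\cX_i\to\cX$. The key input is that $\Grad$ and $\Filt$ commute with these charts. By \Cref{lem:lifting-Theta} applied to the Cartesian square, a filtration $f:\Theta\to\cX$ lifts to $\cX_i$ exactly when $f(1)$ lifts. The same holds for gradings, by the same argument with $\mathrm{B}\Gm$, whose moduli space is also a point. Equivalently, the squares
\[
\Filt(\cX_i)\to\Filt(\cX),\qquad \ev_1:\Filt(\cX_i)\to\cX_i
\]
are compatible with $\ev_1$, and $\Filt(\cX_i)\simeq \Filt(\cX)\times_{\cX,\ev_1}\cX_i$ at least on points. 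The map $\Grad(\cX_i)\to\Grad(\cX)$ is \'etale, so it sends connected components into connected components. Hence for $Z\in\pi_0(\Grad(\cX))$ we get
\[
p_i^{-1}(|\cA_Z|)=\bigcup_{Z'} |\cA_{Z'}|,
\]
where $Z'$ ranges over the components of $\Grad(\cX_i)$ mapping into $Z$. The inclusion $\supseteq$ is clear. For $\subseteq$, take a point $y\in\cX_i$ over $\ev_1(f)$ with $\gr f\in Z$. Lift $f$ to $f'$ with $f'(1)=y$ using \Cref{lem:lifting-Theta}; then $\gr f'$ lies in some $Z'$ over $Z$.

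\textbf{Closedness and finiteness.} By \eqref{eq:quotient-filtrations} and the subsequent example, each $|\cA_{Z'}|$ equals $[G.C_{\lambda,F}/G]$, which is closed by \Cref{lem:quotient-images}. Only finitely many distinct such subsets occur in each chart. Hence $p_i^{-1}(|\cA_Z|)$ is a union of closed sets from a finite list, so it is closed. Since $\coprod_i\cX_i\to\cX$ is \'etale surjective, it is a topological quotient map on $|\cdot|$, and therefore $|\cA_Z|$ is closed. For finiteness, a subset $S\subseteq|\cX|$ is determined by the tuple $(p_i^{-1}(S))_i$ because the $p_i$ are jointly surjective. Each entry is a union of members of a finite collection, so only finitely many tuples, and hence finitely many $|\cA_Z|$, arise.

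\textbf{Main obstacle.} I expect the main obstacle to be the lifting and component bookkeeping. Precisely, one must show that $\Grad(\cX_i)\to\Grad(\cX)$ (\'etale, by \cite{Luna-slice-for-stacks}, for \'etale representable maps) carries components into components. One must also show that lifts of filtrations through the chart preserve $\ev_1$ and $\gr$ compatibly. I would first try to prove the Cartesian identity $\Filt(\cX_i)=\Filt(\cX)\times_{X}\Spec B_i$. This identity should follow from \Cref{lem:lifting-Theta}, because maps from $\Theta_T$ to an algebraic space factor through $T$ for any base $T$, not just geometric points.
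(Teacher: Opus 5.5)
Your proposal is correct and follows essentially the same route as the paper. Both arguments use the finite Cartesian $\GL_{n_i}$-charts from \Cref{lem:adequate-charts} and the base-change identities $\Filt(\cX_i)\simeq\Filt(\cX)\times_X X_i$ and $\Grad(\cX_i)\simeq\Grad(\cX)\times_X X_i$. The paper proves these exactly as in your last paragraph, from the universality of $\Theta_S\to S$ and $\mathrm{B}\mathbb{G}_{m,S}\to S$ for maps to algebraic spaces. The rest also matches: the equality $p_i^{-1}(|\cA_Z|)=\bigcup_{Z'}|\cA_{Z'}|$, then \Cref{lem:quotient-images} chart by chart, with closedness and finiteness descended along the finite \'etale cover.

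Your pointwise version of the base-change step via \Cref{lem:lifting-Theta} also works, with two provisos. You must apply it to field-valued filtrations $\Theta_K\to\cX$, since $|\cA_Z|$ contains non-closed points. You must also choose the $X_i$-coordinate of the lift to be that of $y$, which the lemma's proof permits.
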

\begin{remark}
The finiteness assertion also follows from the reduction to affine torus quotients in \cite[Lem. 4.4.6 and proof of Lemm. 4.4.8]{halpern-leistner2014instability}. More generally, finiteness results for components of graded and filtered stacks, under the very mild hypothesis of quasi-compact graded points, are proved in \cite[Thms. 5.4.6 and 6.2.3]{BHIK2025}.
\end{remark}
\begin{proof}
By \Cref{lem:adequate-charts}, there is an \'{e}tale cover $\{X_i \to X\}_i$ of $X$ by finitely many affine schemes and Cartesian presentations $\rho_i: \cX_i:=[V_i/\GL_{n_i}] \to \cX$ with $V_i$ affine and $n_i \geq 0$. The induced morphism $V_i \to \cX_i \to X_i$ is a good quotient.

For every $k$-scheme $S$, the morphisms $\Theta_S \to S$ and
$\mathrm{B}\mathbb{G}_{m,S} \to S$ are good moduli spaces and hence universal for maps to algebraic spaces \cite[Thm. 6.6]{AlperGood}. So we obtain
\[
\Filt(\cX_i) \simeq \Filt(\cX) \times_X X_i \text{ and } \Grad(\cX_i) \simeq \Grad(\cX) \times_X X_i,
\]
compatible with $\gr$ and $\ev_1$. For a connected component $Z \in \pi_0(\Grad(\cX))$, we have a Cartesian diagram
\[
\begin{tikzcd}
Z_i \ar[r,hook] \ar[d] & \Grad(\cX_i) \ar[d] \ar[r,hook,"\sigma_i"] & \Filt(\cX_i) \ar[r,"\ev_1"] \ar[d] & \cX_i \ar[r,"\pi_i"] \ar[d,"\rho_i"] & X_i \ar[d,"\text{\'{e}tale}"] \\
Z \ar[r,hook] & \Grad(\cX) \ar[r,hook,"\sigma"'] & \Filt(\cX) \ar[r,"\ev_1"'] & \cX \ar[r,"\pi"'] & X
\end{tikzcd}
\]
The pullback $Z_i:=Z \times_X X_i$ is a union of connected components of $\Grad(\cX_i)$. Since topological images commute with base change, we obtain 
\begin{equation}\label{eq:lr-evaluation-base-change}
\rho_i^{-1}(|\cA_Z|) = \bigcup_{Z' \in \pi_0(Z_i)}|\cA_{Z'}|.
\end{equation}
By \eqref{eq:quotient-filtrations}, each connected component $Z'$ is of the form $[F/L_\lambda]$, and the preimage of $|\cA_{Z'}| \subseteq |\cX_i|$ in $V_i$ is $\GL_{n_i}.C_{\lambda,F}$. \Cref{lem:quotient-images} shows that there are only finitely many distinct $\GL_{n_i}$-saturations and each of them is closed in $V_i$. Thus the right-hand side of \eqref{eq:lr-evaluation-base-change} is a finite union of closed subsets in $|\cX_i|$. Closedness descends along the \'{e}tale cover $\{\cX_i \to \cX\}_i$, so $|\cA_Z| \subseteq |\cX|$ is closed. On each \'{e}tale chart $\cX_i \to \cX$ there are only finitely many possible unions, and the finite cover determines a global subset $|\cA_Z|$ by its pullbacks, proving finiteness.
\end{proof}
Keep the notation in the proof of \Cref{prop:finite-images}. Let $\cL$ be a line bundle on $\cX$, and let $\cL_i$ be its pullback along $V_i \to \cX_i=[V_i/\GL_{n_i}] \to \cX$. By \eqref{eq:quotient-filtrations} every connected component $Z' \in \pi_0(Z_i)$ is of the form $[F/L_\lambda]$ for some $\lambda \in X_*(\GL_{n_i})$ and $F \in \pi_0((V_i)_\lambda)$. Hence we have 
\begin{equation}\label{eq:lr-weight-comparison} \wt(\cL|_Z) = \wt(\rho_i^*\cL|_{Z'}) = \wt_\lambda(\cL_i|_F). 
\end{equation}
\begin{corollary}[Parallel to \Cref{cor:weight-inclusion}]
Assume the hypotheses of \Cref{setup:locally-reductive}. Let $\cD$ be an effective Cartier divisor on $\cX$ and $\cL:=\cO_{\cX}(\cD)$. For any $Z \in \pi_0(\Grad(\cX))$, we have
\begin{enumerate}
\item 
If $\wt(\cL|_Z)<0$, then $\cA_Z \subseteq \cD$.
\item 
Let $e: \Grad(\cX) \xrightarrow{\sigma} \Filt(\cX) \xrightarrow{\ev_1} \cX$ be the morphism forgetting the grading. If $e(|Z|) \cap |\cD|=\varnothing$, then $\wt(\cL|_Z)=0$.
\end{enumerate}
\end{corollary}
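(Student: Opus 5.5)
The plan is to mimic the proof of \Cref{cor:weight-inclusion} intrinsically on $\cX$. This avoids passing to charts, except possibly for identifying $|\cA_Z|$ as a closed subset, which is supplied by \Cref{prop:finite-images}.

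For (i), suppose $\wt(\cL|_Z)<0$. Let $x$ be a geometric point of $|\cA_Z|$. By definition \eqref{eq:AZ}, $x=\ev_1(f)$ for some filtration $f:\Theta_K\to\cX$ with $\gr(f)\in Z$, possibly after enlarging the algebraically closed field to $K$. The weight of $f^*\cL$ at $0$ is the $\Gm$-weight of $\gr(f)^*\cL$, which equals $\wt(\cL|_Z)<0$, since $\wt$ is constant on the connected component $Z$. Applying \Cref{cor:negative-in-D} over $K$, the map $f$ factors through $\cD$. In particular $x=f(1)\in|\cD|$. Hence $|\cA_Z|\subseteq|\cD|$. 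By \Cref{prop:finite-images} the set $|\cA_Z|$ is closed, so $\cA_Z$ is the reduced closed substack on it, and the set-theoretic inclusion gives $\cA_Z\subseteq\cD$ as closed substacks, because $\cA_Z$ is reduced. The only care needed is that \Cref{cor:negative-in-D} and \Cref{lem:theta-weight} are stated over an arbitrary field $K$, which they are.

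For (ii), take any point $x:\mathrm{B}\mathbb{G}_{m,K}\to\cX$ of $Z$. Its image $e(x)$ lies outside $|\cD|$, so $x$ factors through the open $\cU=\cX\setminus\cD$. On $\cU$ the canonical section $s$ is nowhere vanishing, so $x^*s$ is a nowhere vanishing $\Gm$-invariant section of the line bundle $x^*\cL$ on $\mathrm{B}\Gm$. This gives a $\Gm$-equivariant trivialization $\cO\simeq x^*\cL$, hence weight $0$. The weight is then $0$ on all of $Z$ by constancy.

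The main subtlety is bookkeeping rather than mathematics. One must check that the weight of $f^*\cL$ at $0\in\Theta$ is exactly the weight of $\gr(f)^*\cL$. This holds because $\gr$ is restriction to $0=\mathrm{B}\Gm\hookrightarrow\Theta$. One must also check that points of $|\cA_Z|$ are realized by field-valued filtrations, which is how the image of $|\gr^{-1}(Z)|$ is defined. If desired, (i) can alternatively be checked on the charts $\rho_i$ via \eqref{eq:lr-evaluation-base-change} and \eqref{eq:lr-weight-comparison}, reducing directly to \Cref{cor:weight-inclusion}(i) for each $\GL_{n_i}$-saturation $\GL_{n_i}.C_{\lambda,F}$. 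Descent of the inclusion along the étale cover then concludes.
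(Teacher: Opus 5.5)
Your proposal is correct and follows essentially the same argument as the paper: for (i) you apply \Cref{cor:negative-in-D} to each filtration $f\colon \Theta_K \to \cX$ with $\gr(f) \in Z$, using $\wt(f^*\cL)=\wt(\cL|_Z)<0$. For (ii) you note that any grading in $Z$ factors through $\cX \setminus \cD$, where the canonical section gives an equivariant trivialization of the pulled-back line bundle; your extra bookkeeping (closedness of $|\cA_Z|$ via \Cref{prop:finite-images} and reducedness of $\cA_Z$ to upgrade the set-theoretic inclusion) only makes explicit what the paper leaves implicit.
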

\begin{proof}
Let $s \in \Gamma(\cX,\cL)$ be the canonical section whose zero substack is $\cD$.

(i). Suppose that $\wt(\cL|_Z)<0$. Let $K/k$ be an algebraically closed field, and let $f: \Theta_K \to \cX$ be a filtration whose associated grading $f|_{\mathrm{B}\mathbb{G}_{m,K}}$ belongs to $Z$. By definition $\wt(f^*\cL) = \wt(\cL|_Z) < 0$. So $f$ factors through $\cD$ by \Cref{cor:negative-in-D}. Consequently, $\ev_1(f)$ belongs to $\cD$. Taking all such filtrations, and using the definition $|\cA_Z| = \ev_1 \left( |\gr^{-1}(Z)| \right)$, we obtain $\cA_Z \subseteq \cD$.

(ii). Suppose that $e(|Z|) \cap |\cD| = \varnothing$. Let $g: \mathrm{B}\mathbb{G}_{m,K} \to \cX$ be any grading in $Z$. The underlying topological space of $\mathrm{B}\mathbb{G}_{m,K}$ consists of one point, whose image under $g$ is $e([g])$. Our assumption therefore implies that $g$ factors through $\cX \setminus \cD$. The canonical section $s$ is nowhere vanishing on this complement. Its pullback defines an isomorphism $\cO_{\mathrm{B}\mathbb{G}_{m,K}} \xrightarrow{\sim} g^*\cL$, sending $1 \mapsto g^*s$, of line bundles on $\mathrm{B}\mathbb{G}_{m,K}$. Hence $\wt(\cL|_Z)=\wt(g^*\cL)=0$, as required.
\end{proof}

\subsection{Geometric characterization of the boundary}
\begin{theorem}[Parallel to \Cref{thm:quotient-boundary}]\label{thm:geometric-good}
Assume the hypotheses of \Cref{setup:locally-reductive}. Let $\cD$ be a reduced effective Cartier divisor on $\cX$ and $\cL:=\cO_{\cX}(\cD)$.
Then $\cD \subseteq \cX$ is removable for $\pi$ if and only if
\begin{equation}\label{eq:description-in-locally-reductive}
\cD = \bigcup_{Z \in \pi_0(\Grad(\cX)), \wt(\cL|_Z)<0} \cA_Z.
\end{equation}
In particular, if $\cD$ is irreducible, then $\cD \subseteq \cX$ is removable for $\pi$ if and only if 
\[
\cD = \cA_Z \text{ with } \wt(\cL|_Z)<0.
\]
\end{theorem}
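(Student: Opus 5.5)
The plan is to reduce to the quotient-stack case, \Cref{thm:quotient-boundary}, through the Cartesian charts of \Cref{lem:adequate-charts}. First fix a finite étale cover $\{X_i \to X\}$ with Cartesian presentations $\rho_i: \cX_i=[V_i/\GL_{n_i}] \to \cX$ and good quotients $V_i \to X_i$. This is exactly \Cref{setup:quotient} for $G=\GL_{n_i}$, which is connected reductive. Set $D_i:=\cD\times_\cX V_i$. It is a $\GL_{n_i}$-invariant reduced effective Cartier divisor on $V_i$, since étale pullback preserves reducedness and the Cartier property. Its line bundle is $\cL_i=\cO_{V_i}(D_i)$, the pullback of $\cL$.

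The first step is to show that removability is étale local. Since $\cD$ is reduced Cartier, $\cX\setminus\cD\hookrightarrow\cX$ is affine. Its pullback $W_i=V_i\setminus D_i$ is therefore affine, because $V_i$ is affine. By \Cref{prop:chart-test}, $\cD$ is removable for $\pi$ if and only if each $D_i$ is removable for $V_i\to X_i$. Equivalently, we can argue via \Cref{thm:adequate-boundary}(iii), since the vanishing of $(\pi\circ i)_*\cN^{\otimes n}$ is étale local on $X$ and commutes with flat base change. By \Cref{thm:quotient-boundary}, the latter holds if and only if
\[
D_i=\bigcup_{\wt_\lambda(\cL_i|_F)<0}\GL_{n_i}.C_{\lambda,F}\quad\text{for every } i.
\]

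The second step is to identify the right-hand side with the pullback of \eqref{eq:description-in-locally-reductive}. For $Z\in\pi_0(\Grad(\cX))$, the proof of \Cref{prop:finite-images} gives $Z\times_X X_i=\Grad(\cX)\times_X X_i\cap\ldots$, a union of components $Z'=[F/L_\lambda]$ of $\Grad(\cX_i)$. It also gives \eqref{eq:lr-evaluation-base-change}, so the preimage of $|\cA_Z|$ in $V_i$ is the union of the corresponding $\GL_{n_i}.C_{\lambda,F}$. By \eqref{eq:lr-weight-comparison}, each such $(\lambda,F)$ satisfies $\wt_\lambda(\cL_i|_F)=\wt(\cL|_Z)$. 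Conversely, every component $[F/L_\lambda]$ of $\Grad(\cX_i)$ maps into a unique component $Z$ of $\Grad(\cX)$, because $\Grad(\cX_i)\simeq\Grad(\cX)\times_X X_i$. Hence the pairs $(\lambda,F)$ with negative weight on $V_i$ are exactly those lying over components $Z$ with $\wt(\cL|_Z)<0$. Pulling back \eqref{eq:description-in-locally-reductive} along $V_i\to\cX$ therefore gives precisely the local equality displayed above.

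The third step is descent. Both sides of \eqref{eq:description-in-locally-reductive} are reduced closed substacks: the right side is a closed subset by \Cref{prop:finite-images}, and the union is finite. Two reduced closed substacks agree if and only if their supports agree, and this can be checked on the étale cover $\{V_i\to\cX\}$, which is jointly surjective. So \eqref{eq:description-in-locally-reductive} holds if and only if all the local equalities hold, which proves the equivalence.

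For the irreducible case, assume $\cD$ is irreducible. If \eqref{eq:description-in-locally-reductive} holds, then $\cD$ is a finite union of closed subsets $\cA_Z$, so it equals one of them. Conversely, if $\cD=\cA_Z$ with $\wt(\cL|_Z)<0$, then $\cD$ is contained in the union in \eqref{eq:description-in-locally-reductive}. The reverse containment follows from part (i) of the preceding corollary, which gives $\cA_{Z'}\subseteq\cD$ for every $Z'$ of negative weight.

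I expect the main obstacle to be the bookkeeping in the second step. One must check that the components of $Z\times_X X_i$ correspond exactly to the local pairs $(\lambda,F)$, with every local negative-weight pair arising from some global $Z$ and the weights matching. This relies on the base change identity $\Grad(\cX_i)\simeq\Grad(\cX)\times_X X_i$, together with \eqref{eq:quotient-filtrations}, which the paper applies to $\GL_n$ since it has a split maximal torus.
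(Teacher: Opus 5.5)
Your proposal is correct and follows the paper's own proof essentially step by step. You reduce removability to the charts $[V_i/\GL_{n_i}]$ via \Cref{prop:chart-test} and apply \Cref{thm:quotient-boundary} on each $V_i$, matching the local unions with the pullback of \eqref{eq:description-in-locally-reductive} through \eqref{eq:lr-evaluation-base-change} and \eqref{eq:lr-weight-comparison}. Your descent of supports along the jointly surjective cover, and your irreducible case via \Cref{prop:finite-images} and part (i) of the corollary parallel to \Cref{cor:weight-inclusion}, only spell out what the paper leaves implicit. Two cosmetic slips: $V_i \to \cX$ is smooth rather than étale, though reducedness is still preserved; and your displayed expression for $Z \times_X X_i$ is garbled, though the intended claim that it is a union of components of $\Grad(\cX_i)$ is clear.
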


\begin{proof}
Use the \'{e}tale charts in the proof of \Cref{prop:finite-images}. Write $D_i$ for the pullback of $\cD$ to $V_i$.  By \Cref{prop:chart-test}, $\cD$ is removable for $\pi$ if and only if each $D_i$ is removable for $\pi_i$. By \Cref{thm:quotient-boundary}, the latter condition is equivalent to
\[
D_i = \bigcup_{\wt_\lambda(\cO_{V_i}(D_i)|_F)<0}        \GL_{n_i}.C_{\lambda,F}.
\]
The equations \eqref{eq:lr-evaluation-base-change} and
\eqref{eq:lr-weight-comparison} identify these equalities with the
pullbacks of \eqref{eq:description-in-locally-reductive}, so they are equivalent to \eqref{eq:description-in-locally-reductive}.
The finiteness assertion follows from \Cref{prop:finite-images}.
\end{proof}
\begin{corollary}[Parallel to \Cref{cor:finite-reductive}]\label{cor:good-finite}
Assume the hypotheses of \Cref{setup:locally-reductive}. Assume in addition that $\cX$ is smooth. Then $\PD(\pi)$ is finite, and each element of $\PD(\pi)$ is of the form
\[
\cA_Z \text{ with } \wt(\cO_{\cX}(\cA_Z)|_Z)<0.
\]
Write $\PD(\pi)=\{\cD_1,\ldots,\cD_r\}$. For a subset $\sigma \subseteq \{1,\ldots,r\}$, write
\[
\cD_\sigma:=\sum_{i \in \sigma} \cD_i \text{ and } \cL_\sigma:=\cO_{\cX}(\cD_\sigma).
\]
Call $\sigma$ \emph{admissible} if, for every $i \in \sigma$, there exists $Z_i \in \pi_0(\Grad(\cX))$ such that
\begin{equation}\label{eq:compatibility-weights}
\cD_i=\cA_{Z_i} \text{ and } \wt(\cL_\sigma|_{Z_i})<0.
\end{equation}
The assignment $\sigma \mapsto \cX \setminus \cD_\sigma$ is an order-reversing bijection from admissible subsets of $\{1,\ldots,r\}$ to $\Op(\pi)$. In particular, we have $|\Op(\pi)| \leq 2^r$.
\end{corollary}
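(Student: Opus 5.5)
The argument follows the proof of \Cref{cor:finite-reductive}, with \Cref{thm:geometric-good} and \Cref{prop:finite-images} in place of \Cref{thm:quotient-boundary} and \Cref{lem:quotient-images}. The main new input is that each $\cA_Z$ is irreducible; this is what lets us match prime components with the sets $\cA_Z$.

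\textbf{Step 1: elements of $\PD(\pi)$.} Since $\cX$ is smooth, it is locally factorial, so every prime divisor is Cartier. Let $\cD \in \PD(\pi)$. Applying \Cref{thm:geometric-good} to $\cD$ gives
\[
\cD=\bigcup_{\wt(\cO_\cX(\cD)|_Z)<0}\cA_Z .
\]
By \Cref{prop:finite-images} this is a finite union of closed subsets. Since $\cD$ is irreducible, it equals one of them, say $\cD=\cA_Z$, and then $\wt(\cO_\cX(\cA_Z)|_Z)<0$. The same proposition says there are only finitely many distinct sets $|\cA_Z|$, so $\PD(\pi)$ is finite.

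\textbf{Step 2: admissible $\sigma$ give preserving opens.} Let $\sigma$ be admissible. Local factoriality makes $\cD_\sigma$ a reduced effective Cartier divisor. For each $i\in\sigma$ we have $\wt(\cL_\sigma|_{Z_i})<0$, so the parallel of \Cref{cor:weight-inclusion}~(i) gives $\cA_{Z_i}\subseteq\cD_\sigma$. Thus the right-hand side of \eqref{eq:description-in-locally-reductive} for $\cD_\sigma$ contains $\bigcup_{i\in\sigma}\cD_i=\cD_\sigma$. The reverse inclusion holds by the same corollary (i), applied to every $Z$ with negative weight. So \eqref{eq:description-in-locally-reductive} holds, and $\cD_\sigma$ is removable by \Cref{thm:geometric-good}. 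The map $\sigma\mapsto\cX\setminus\cD_\sigma$ is injective because a reduced closed substack is determined by its complement. It is order-reversing by construction.

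\textbf{Step 3: surjectivity.} Let $\cU\in\Op(\pi)$. By \Cref{lem:boundary-purity}, $\cD:=(\cX\setminus\cU)_\red$ is a reduced effective Cartier divisor. Each of its prime components is removable by \Cref{cor:removable-restriction}, hence lies in $\PD(\pi)$, so $\cD=\cD_\sigma$ for some $\sigma$. \Cref{thm:geometric-good} then writes $\cD_\sigma$ as a finite union of sets $\cA_Z$ with $\wt(\cL_\sigma|_Z)<0$. If each such $\cA_Z$ is irreducible, every component $\cD_i$ equals one of them, which provides $Z_i$ as in \eqref{eq:compatibility-weights}. So $\sigma$ is admissible. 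The bound $|\Op(\pi)|\le 2^r$ follows by counting subsets.

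\textbf{Step 4: irreducibility of $\cA_Z$ (the main obstacle).} In the quotient case this came from smoothness of the attractors $C_{\lambda,F}$ via \cite[Prop. 1.4.20]{Drinfeld}. Here I would use that $\gr^{-1}(Z)$ is connected, as noted in the recollections. Since $\cX$ is smooth, $\Filt(\cX)$ is smooth: on the charts this is smoothness of $[C_{\lambda,F}/P_\lambda^+]$, and smoothness descends along the étale base change used in the proof of \Cref{prop:finite-images}. A connected smooth stack is irreducible, so $\gr^{-1}(Z)$ is irreducible. Its image $|\cA_Z|=\ev_1(|\gr^{-1}(Z)|)$ is then irreducible, and it is closed by \Cref{prop:finite-images}.

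If smoothness of $\Filt(\cX)$ is hard to justify globally, the fallback is local. On each chart, every component $\cD_i$ pulled back is a union of $\GL_{n_i}.C_{\lambda,F}$, and these are irreducible by the argument in \Cref{cor:finite-reductive}. One would then glue this information via \eqref{eq:lr-evaluation-base-change}, using that $\cD_i$ is irreducible, to pick a single global $Z_i$. This gluing is the delicate point.
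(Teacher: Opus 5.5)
Your proposal is correct and follows the paper's proof. The key input is irreducibility of $\cA_Z$, obtained from smoothness of $\Filt(\cX)$ and connectedness of $\gr^{-1}(Z)$, after which the argument of \Cref{cor:finite-reductive} goes through unchanged. The differences are minor: the paper cites \cite[Prop. 1.3.1 (4)]{halpern-leistner2014instability} for smoothness of $\Filt(\cX)$, whereas you check it on the quotient charts via smoothness of the attractors and \'{e}tale descent, and your fallback gluing argument is not needed.
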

\begin{proof}
Since $\cX$ is smooth, the stack $\Filt(\cX)$ is smooth by \cite[Prop. 1.3.1 (4)]{halpern-leistner2014instability}. For each connected component
$Z \subseteq \Grad(\cX)$, the preimage $\gr^{-1}(Z)$ is smooth and connected, hence irreducible. Consequently, its image $\cA_Z$ is irreducible.

By \Cref{thm:geometric-good} and \Cref{prop:finite-images}, every removable reduced Cartier boundary $\cD_\sigma$ is a finite union of
closed irreducible substacks $\cA_Z$ satisfying $\wt(\cL_\sigma|_Z)<0$.
Each prime component $\cD_i$ thus equals one of these $\cA_Z$, proving \eqref{eq:compatibility-weights}. The remaining argument is identical to that of \Cref{cor:finite-reductive}.
\end{proof}
\begin{remark}
In characteristic zero, it is possible to prove the finiteness of preserving opens using \cite[Cor. 4]{Benoist2013} (under normality in place of smoothness) and passage to \'{e}tale quotient charts.
\end{remark}
\begin{example}[A log-pair wall crossing]\label{ex:log-wall}
We work over the field $\bbC$ of complex numbers. Following \cite{ADL}, let $\overline{\cP}^{\GIT}_4$ denote the quotient stack of GIT-semistable plane quartics by $\PGL_3$, and let $\overline{\cP}^{\mathrm{K}}_{4,3/8}$ denote the stack of K-semistable log Fano pairs admitting a $\bbQ$-Gorenstein smoothing to 
\[
\left( \PP^2,\dfrac{3}{8}C \right)
\]
with $C$ a smooth plane quartic. Their good moduli spaces are denoted by $\overline{P}^{\GIT}_4$ and $\overline{P}^{\mathrm{K}}_{4,3/8}$, respectively. By \cite[Thm. 5.5]{ADL}, the first wall crossing gives an open immersion $\Phi^-: \overline{\cP}^{\GIT}_4 \hookrightarrow \overline{\cP}^{\mathrm K}_{4,3/8}$ inducing an isomorphism of good moduli spaces $\phi^-: \overline{P}^{\GIT}_4 \xrightarrow{\sim} \overline{P}^{\mathrm{K}}_{4,3/8}$. We describe its boundary using \Cref{thm:geometric-good}.

Let $\cX:=\overline{\cP}^{\mathrm{K}}_{4,3/8}$, identify $\cU:=\overline{\cP}^{\GIT}_4$ with its image, and write $\cD:=(\cX \setminus \cU)_{\red}$. The geometric points of $\cD$ are represented by
\[
\left( \PP(1,1,4),\dfrac{3}{8} C_f \right) \text{ with } C_f=(z^2+f_8(x,y)=0) \text{ and } f_8 \in H^0(\PP^1,\cO_{\PP^1}(8)),
\]
where $x,y,z$ have weights $1,1,4$. The distinguished K-polystable point is
\[
q=\left( \PP(1,1,4),\dfrac{3}{8}Q'_4 \right) \text{ and } Q'_4=(z^2=0).
\]
The cocharacter
\[
\lambda: \Gm \to \Aut(q) \text{ given by } \lambda(t)[x:y:z]=[x:y:tz],
\]
gives the degeneration $z^2 + t^2 f_8(x,y)=0$ to $q$, as in \cite[Lem. 5.3 and its proof]{ADL}. More explicitly, setting
\[
H:=\Aut(q)=\GL_2 \times \Gm / \{(a\mathrm{I}_2,a^4): a \in \Gm\},
\]
we have $\cD \simeq [H^0(\PP^1,\cO_{\PP^1}(8))/H]$, with $\lambda$ acting on $H^0(\PP^1,\cO_{\PP^1}(8))$ with weight $2$. Let $Z \subseteq \Grad(\cX)$ be the connected component containing the grading $(q,\lambda)$. Then $Z \simeq \mathrm{B}H$.

The weight of the normal line bundle can be read from the smoothing $u_0u_2-u_1^2 = sz \subseteq \PP(1,1,1,2)$. The boundary is locally defined by $s=0$, so $\wt(\cO_{\cX}(\cD)|_Z) = -1$. Every point of $\cD$ has a degeneration with associated graded in $Z$, and the negative boundary weight gives $\cA_Z \subseteq \cD$. Consequently, $\cA_Z=\cD$ and $\wt(\cO_{\cX}(\cD)|_Z)<0$. Thus this wall-crossing realizes a prime removable boundary in our classification.  Moreover, a computation of the attracting images shows that $\cD$ is the unique codimension-one attracting image of $\cX=\overline{\cP}^{\mathrm K}_{4,3/8}$; hence, by \Cref{cor:good-finite}, $\overline{\cP}^{\GIT}_4$ is the unique non-trivial open substack of $\cX$ preserving its good moduli space.
\end{example}

\section{Bounding preserving opens by the relative Picard rank}\label{sec:bound}
By \Cref{cor:good-finite}, the number of preserving opens is finite for locally factorial, locally reductive stacks. In this section, we give a precise bound in terms of the relative Picard rank, when the stack has stable points in the following sense.
\begin{definition}[\cite{MR4255045}, Defn. 2.5 and \cite{fernandez_herrero_distinguishing_2024}, Defn. 5.8]\label[definition]{defn:stable_adequate_moduli}
Let $\pi: \cX \to X$ be an adequate moduli space. A point $x$ of $\cX$ is \emph{stable} relative to $\pi$ if $|\pi^{-1}(\pi(x))|=\{x\}$ under the induced map of topological spaces $|\cX| \to |X|$. We say $\pi$ is a \emph{stable} adequate moduli space if there is an open dense set of stable points in $\cX$.
\end{definition}
By \cite[Thm. 5.3.1 (5)]{AlperAdequate}, the stable points of $\cX$ are exactly those points that are not identified with other points when passing to the adequate moduli space $X$. For an abelian group $A$, we denote by $\rk A:=\dim_{\bbQ} A \otimes_{\bbZ} \bbQ$ its rank.

\begin{definition}
Let $\pi: \cX \to X$ be an adequate moduli space. The \emph{relative Picard group} is (as an abstract abelian group)
\[
\Pic(\cX/X):=\coker \bigl( \pi^*: \Pic(X) \to \Pic(\cX) \bigr)
\]
and denote its rank by $\rho_\pi:=\rk \Pic(\cX/X)$.
\end{definition}
\begin{remark}
By the projection formula and $\pi_*\cO_\cX = \cO_X$, the pullback $\pi^*: \Pic(X) \to \Pic(\cX)$ is injective. Note that $[\cL] \in \Pic(\cX/X)$ is torsion precisely when some positive power of $\cL$ descends to $X$.
\end{remark}
We begin by showing that the relative Picard rank $\rho_\pi$ is finite. For this we need the following descent lemma. Recall that for good moduli spaces, a vector bundle with stabilizers acting trivially on the closed fibers descends to the moduli space \cite[Thm. 10.3]{AlperGood}. This fails for adequate moduli spaces \cite[Ex. 5.6.1]{AlperAdequate}, but we show that descent still holds up to a power, as  expect.
\begin{lemma}[Descent up to a power]
\label{lem:adequate-power-descent}
Let $k$ be an algebraically closed field, and let $\pi: \cX \to X$ be an adequate moduli space, where $\cX$ is an algebraic stack of finite type over $k$. Let $\cL$ be a line bundle on $\cX$. Suppose that, for every geometric point $x \in |\cX|$ with closed image, the stabilizer $G_x$ acts trivially on $\cL_x$. Then there exists an integer $n>0$ such that $\cL^{\otimes n} \simeq \pi^*M$ for some line bundle $M$ on $X$.
\end{lemma}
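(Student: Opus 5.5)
The plan is to reduce to the good moduli space case via Frobenius, or, more concretely, to use the structure of adequate moduli spaces. The standard strategy is the following. Adequate moduli spaces in positive characteristic are related to good ones by the fact that, for a surjection of algebras, invariants lift only up to $p$-th powers. So the aim is to find, locally on $X$, an invariant nowhere-vanishing section of some power $\cL^{\otimes n}$, and then to glue.

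The first step is a pointwise construction. Fix a closed point $x\in|X|$ and let $z\in|\cX|$ be the unique closed point over $x$ (\cite[Thm. 5.3.1]{AlperAdequate}). Write $\cZ\subseteq\cX$ for the reduced closed substack supported at $z$, so $\cZ\simeq \mathrm{B}G_z$ over $k$. By hypothesis $\cL|_{\cZ}$ is the trivial character, so it has a nowhere-vanishing section $e\in\Gamma(\cZ,\cL|_\cZ)$. Consider the graded algebra $\cB:=\bigoplus_{m\ge0}\cL^{\otimes m}$ and its quotient $\cB\twoheadrightarrow \cB|_\cZ=\bigoplus_m (\cL|_\cZ)^{\otimes m}$. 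Since $\pi$ is adequately affine, after replacing $X$ by an affine étale neighborhood $W$ of $x$ (or a Zariski one), some power $e^{r}$ lifts to a section of $\pi_*\cB$. Taking the homogeneous component of degree $r$, we get $s\in\Gamma(\cX_W,\cL^{\otimes r})$ restricting to $e^{r}\neq0$ on $\cZ$.

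Next I would show that $s$ trivializes $\cL^{\otimes r}$ near the fibre. The zero locus $V(s)$ is closed and its image $\pi(V(s))$ is closed, since $\pi$ is universally closed. It does not contain $x$, because the only closed point over $x$ is $z$, every point of $\pi^{-1}(x)$ specializes to $z$, and $s(z)\ne0$. Shrinking $W$ to avoid $\pi(V(s))$, the section $s$ is nowhere vanishing. Hence $\cL^{\otimes r}|_{\cX_W}\simeq\cO$, which descends trivially.

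To globalize, by quasi-compactness of $X$ finitely many such $W_j$ with exponents $r_j$ cover $X$. Set $n:=\prod r_j$; then $\cL^{\otimes n}$ is trivial on each $\cX_{W_j}$. Put $M:=\pi_*\cL^{\otimes n}$. On each $W_j$, the projection formula and $\pi_*\cO_\cX=\cO_X$ give $M|_{W_j}\simeq\cO_{W_j}$, and the adjunction map $\pi^*M\to\cL^{\otimes n}$ is an isomorphism over each $W_j$. Since pushforward commutes with flat base change and the isomorphism property is étale local, $M$ is a line bundle on $X$ with $\pi^*M\simeq\cL^{\otimes n}$. This avoids having to check cocycle compatibility by hand.

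I expect the main obstacle to be the first step: making sure the adequate lifting applies to the graded algebra surjection $\cB\to\cB|_\cZ$, and that the lift of $e^{r}$ can be taken homogeneous. Homogeneity follows by projecting onto degree $r$, exactly as in the proof of \Cref{thm:adequate-boundary}. One also has to confirm that triviality of the character on the closed point is enough. This is where the hypothesis at closed-image points is used: it guarantees that $e$ exists as a $G_z$-invariant generator.
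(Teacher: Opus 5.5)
Your proposal is correct and follows essentially the same route as the paper. You lift a power of a trivializing section at the unique closed point over each closed point of $X$ using adequacy for the graded algebra $\bigoplus_{d\geq 0}\cL^{\otimes d}$, take its homogeneous component, use closedness of $\pi$ to trivialize a power of $\cL$ over a neighbourhood, and take a common multiple over a finite cover. The only cosmetic differences are that you work with étale neighbourhoods rather than first reducing to affine $X$, and that you check descent by hand via $M=\pi_*\cL^{\otimes n}$ and the adjunction map instead of citing the descent lemma.
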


\begin{proof}
Adequate moduli spaces are preserved by flat base change \cite[Prop. 5.2.9 (1)]{AlperAdequate}, and descent of a line bundle along an adequate moduli space is fpqc-local on the target \cite[Lem. 5.1.5]{MR5066465}. Since $X$ is quasi-compact, we may work over an \'{e}tale cover of $X$ by finitely many affine schemes and take a common multiple of the resulting exponents. So hereafter we may assume that $X$ is affine.

Fix a closed point $y \in X$, and let $x$ be the unique closed point of $\pi^{-1}(y)$ (see \cite[Thm. 5.3.1]{AlperAdequate}). Since $y$ is closed, $x$ is also closed in $\cX$. Its residual gerbe $i: \mathrm{B}G_x \hookrightarrow \cX$ is therefore a closed immersion. By hypothesis, $i^*\cL$ is trivial. Choose a nowhere-vanishing section $t$ of $i^*\cL$. Consider the surjection of graded quasi-coherent $\cO_\cX$-algebras
\[
\bigoplus_{d \geq 0} \cL^{\otimes d} \twoheadrightarrow i_*\bigoplus_{d \geq 0}i^*\cL^{\otimes d}.
\]
By adequacy, some positive power of $t$ lifts to a global section of $\bigoplus_{d \geq 0} \cL^{\otimes d}$. Taking its homogeneous component of the appropriate degree gives an integer $n_y>0$ and a section (see \cite[Lem. 4.1.8]{AlperAdequate}) $s_y \in \Gamma(\cX,\cL^{\otimes n_y})$ such that $i^*s_y=t^{n_y}$. The zero locus $\mathbf{V}(s_y)$ does not contain $x$, and hence is disjoint from $\pi^{-1}(y)$. Since $\pi$ is closed, the subset $U_y:=X \setminus \pi(\mathbf{V}(s_y)) \subseteq X$ is an open neighborhood of $y$, and $s_y$ trivializes $\cL^{\otimes n_y}$ on $\pi^{-1}(U_y)$. These neighborhoods cover all closed points of $X$, and hence cover $X$. Choose a finite subcover $U_{y_1},\dots,U_{y_r}$ of $X$, and let $n$ be a common multiple of $n_{y_1},\dots,n_{y_r}$. Then $\cL^{\otimes n}$ is trivial over every open $\pi^{-1}(U_{y_i})$, so it descends to $X$ by \cite[Lem. 5.1.5 (ii)]{MR5066465}.
\end{proof}
\begin{proposition}\label{prop:relative-picard-finiteness}
Let $k$ be an algebraically closed field, and let $\pi: \cX \to X$ be an
adequate moduli space, where $\cX$ is an algebraic stack of finite type
over $k$ with affine diagonal. Then the quotient $\Lambda_\pi:=\Pic(\cX/X)/\Pic(\cX/X)_{\tors}$ is a free abelian group of finite rank. In particular, we have $\rho_\pi < \infty$.
\end{proposition}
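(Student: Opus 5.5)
We will construct an injective homomorphism from $\Lambda_\pi$ into a finitely generated free abelian group built from characters of stabilizers at closed points. Since a subgroup of a finitely generated free abelian group is again free of finite rank, this gives both assertions.

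\emph{Step 1: a finite set of closed points.} Since $\cX$ is of finite type, $X$ is quasi-compact. We would like finitely many closed points $x_1,\ldots,x_m$ of $\cX$ that detect descent up to a power. The natural route is a stratification: for the quasi-compact stack $\cX$ with affine diagonal, stratify $|\cX|$ into finitely many locally closed pieces on which the stabilizer groups have locally constant type. More modestly, after passing to an \'etale cover of $X$ by finitely many affines, we want the weight map $\Pic(\cX)\to \prod_j X^*(G_{x_j})$ to detect the hypothesis of \Cref{lem:adequate-power-descent} up to torsion. Here $x\mapsto [\cL_x]\in X^*(G_x)$ is the character by which $G_x$ acts on the fiber. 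Each $X^*(G_x)=\mathrm{Hom}(G_x,\Gm)$ is a finitely generated abelian group, because $G_x$ is affine of finite type. Its torsion-free quotient is therefore free of finite rank.

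\emph{Step 2: local constancy of characters.} The key claim is that the character $\chi_x(\cL)\in X^*(G_x)/\tors$ is "constant along strata" in the following sense. Suppose $\chi_{x_j}(\cL)$ is torsion for representatives $x_j$ of finitely many families. Then $\chi_x(\cL)$ is torsion for every closed point $x$. Granting this, suppose $\cL$ maps to zero in $\prod_j X^*(G_{x_j})/\tors$. Replacing $\cL$ by $\cL^{\otimes N}$, all stabilizers at closed points act trivially. Then \Cref{lem:adequate-power-descent} shows that some power of $\cL$ descends. Hence $[\cL]$ is torsion in $\Pic(\cX/X)$, and the map $\Lambda_\pi\to\prod_j X^*(G_{x_j})/\tors$ is injective. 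It is well defined because pullbacks $\pi^*M$ have trivial characters at closed points: the action on $(\pi^*M)_x=M_{\pi(x)}$ is trivial.

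\emph{Main obstacle.} The obstacle is the finiteness and uniformity in Step 2: infinitely many closed points, with varying stabilizers, must be controlled by finitely many character groups, and torsion must be uniform. To handle this I would work on finitely many affine charts of the form $\cX\times_X X_i$. On each chart, take a smooth affine atlas and consider the relative inertia $I_{\cX}\to\cX$, which is affine of finite type. Using noetherian induction, stratify $\cX$ so that over each reduced stratum $\cS$ the inertia is flat. After further stratification it has a constant (connected-component and torus) structure, and the character sheaf $\mathrm{Hom}(I_{\cS},\Gm)$ becomes locally constant. A line bundle restricted to $\cS$ then gives a section of this locally constant sheaf. If that section is torsion at one point of a connected stratum, it is torsion, with bounded exponent, on the whole stratum. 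Finitely many strata give finitely many test points $x_j$ and a uniform exponent $N$. If the stratification argument becomes delicate in positive characteristic, I would fall back on bounding $\rk$ directly. For this I would use the injection $\Lambda_\pi\hookrightarrow \bigoplus_{\cS}\Gamma(\cS,\mathrm{Hom}(I_\cS,\Gm))/\tors$, whose target has finite rank because each term has finite rank.
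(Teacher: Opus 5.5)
\textbf{Gap in Step 2.} Your overall architecture is the paper's: a weight map from $\Pic(\cX/X)$ to finitely many character groups modulo torsion, a finite stratification on which the inertia is flat, and \Cref{lem:adequate-power-descent} to identify the kernel with the torsion subgroup. The well-definedness and the final step (a subgroup of a finitely generated free group is free) are fine.

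The gap is Step 2, which carries all the content and is only asserted. You test at \emph{closed} points, so you need that after ``further stratification'' the sheaf $\mathrm{Hom}(I_{\mathscr S},\Gm)$ becomes locally constant. Only then does torsion at one closed point propagate, with bounded exponent, over the whole stratum. Flatness alone does not give this. It lets triviality of a character propagate from the generic fiber to all fibers, but not from a closed fiber. For example, take the flat group scheme $\Spec k[t,x,(1+tx)^{-1}]$ over $\bbA^1_t$ with law $x*y=x+y+txy$; its fibers are $\Gm$ for $t\neq 0$ and $\mathbb{G}_a$ at $t=0$. The character $1+tx$ is trivial on the closed fiber $t=0$ and of infinite order on every other fiber. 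A stratification making the character sheaf locally constant modulo torsion would be a genuine structure theorem on families of affine group schemes, delicate in positive characteristic where inertia need not be reduced. You do not supply one.

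\textbf{The missing idea.} Test at the geometric \emph{generic} point of each stratum instead.
\begin{enumerate}
\item Take an atlas $U\to\cX$ and stratify $U_\red$ into integral locally closed $U_i$ with $H_i:=\cI_\cX\times_\cX U_i\to U_i$ flat (and affine). Put $G_i:=(H_i)_{\bar\eta_i}$.
\item A line bundle $\cL$ gives a character $\chi\colon H_i\to\mathbb{G}_{m,U_i}$. Suppose its image in $X^*(G_i)$ is torsion, say $\chi^m=1$ on the geometric generic fiber, hence on the generic fiber.
\item Then $\chi^m=1$ on all of $H_i$. Locally, the coordinate ring of $H_i$ is flat, hence torsion-free, over that of the integral $U_i$, so it injects into the coordinate ring of the generic fiber.
\item Taking $m$ common to the finitely many $i$, every geometric stabilizer acts trivially on $\cL^{\otimes m}$, and \Cref{lem:adequate-power-descent} finishes.
\end{enumerate}
No local constancy or extra stratification is needed.

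\textbf{Your fallback.} The same argument is exactly what your fallback lacks: it shows $\Gamma(\mathscr S,\mathrm{Hom}(I_{\mathscr S},\Gm))$ injects into the finitely generated group $X^*$ of the generic fiber. Without it, ``each term has finite rank'' is unjustified. Moreover, finite rank of the target alone would give $\rho_\pi<\infty$ but not freeness of $\Lambda_\pi$, since a torsion-free group of finite rank (e.g.\ $\bbQ$) need not be free. You need the target to be finitely generated.
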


\begin{proof}
Choose an atlas $U \twoheadrightarrow \cX$ with $U$ a scheme of finite type over $k$. By generic flatness and noetherian induction, $U_{\red}$
admits a finite stratification $U_{\red}=\sqcup_{i} U_i$ by integral locally closed subschemes
$U_i$ such that $H_i:=\cI_{\cX} \times_{\cX} U_i \to U_i$ is flat, where $\cI_{\cX}$ is the inertia stack of $\cX$. Let $\eta_i$ be a geometric generic point of $U_i$, and let $G_i:=(H_i)_{\eta_i}$. Stabilizer actions on the fibers of line bundles induce a homomorphism
\[
w: \Pic(\cX/X) \to \bigoplus_i X^*(G_i)/X^*(G_i)_{\tors}.
\]
The target is free of finite rank, since each $X^*(G_i)$ is finitely
generated \cite[Lem. 4.9]{MR3503967}. To conclude, we show that $\ker w = \Pic(\cX/X)_{\tors}$, and hence $\Lambda_\pi$ embeds into a free abelian group of finite rank and is therefore itself is free of finite rank.

Suppose that $w([\cL])=0$.  We show $\cL$ is torsion. By definition, there exists an integer $m>0$ such that the character of stabilizer at $\cL^{\otimes m}$ is trivial at every generic point $\eta_i$. Since $H_i$ is flat over the integral scheme $U_i$, the corresponding character $H_i \to \mathbb{G}_{m,U_i}$ is trivial everywhere. Thus every geometric stabilizer of $\cX$ acts trivially on $\cL^{\otimes m}$. By \Cref{lem:adequate-power-descent}, some positive power of $\cL^{\otimes m}$ descends to $X$, so $\cL$ itself is torsion.
\end{proof}
Now we can prove the main result of this section.
\begin{theorem}\label{thm:stack-picard-bound}
Let $k$ be an algebraically closed field, let $\cX$ be a smooth locally reductive integral algebraic stack of finite type over $k$ with affine diagonal, and let $\pi: \cX \to X$ be a stable adequate moduli space. Then
\[
|\PD(\pi)| \leq 2 \rho_\pi \text{ and } |\Op(\pi)| \leq 3^{\rho_\pi}.
\]
Both upper bounds are sharp.
\end{theorem}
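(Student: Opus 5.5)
The plan is to attach to every prime removable divisor a class in $\Pic(\cX/X)\otimes\bbQ$, and to show that these classes are linearly independent up to a pairing of at most two divisors per direction. Write $\PD(\pi)=\{\cD_1,\ldots,\cD_r\}$; by \Cref{cor:good-finite} we have $\cD_i=\cA_{Z_i}$ with $\wt(\cO_\cX(\cD_i)|_{Z_i})<0$.

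\textbf{Step 1 (independence of each divisor from the others).} Suppose a relation
\[
\sum_i n_i[\cO_\cX(\cD_i)]=0 \quad\text{in } \Pic(\cX/X)\otimes\bbQ.
\]
Split it as $E^+\sim_{\bbQ} E^-$ modulo $\pi^*\Pic(X)$, where $E^\pm$ are effective combinations of the $\cD_i$ with disjoint supports. After clearing denominators, $\cO_\cX(mE^+)\simeq\cO_\cX(mE^-)\otimes\pi^*A$.

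Stability should now be used as follows. Over the dense open stable locus, $\pi$ is a homeomorphism onto its image. I expect an effective divisor on $\cX$ that meets the stable locus to have closed image in $X$; this uses that $\pi$ is universally closed and that stable points are saturated. One then argues as in \Cref{prop:descent-obstruction}: the section $s_{E^+}^m/s_{E^-}^m$ is a rational section of $\pi^*A$ that is regular off $E^-$ and descends.

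The intended conclusion is that $\cD_i$ and $\cD_j$ can be proportional in the relative Picard group only when they form a pair like $X_i^+,X_i^-$ in \Cref{thm:main-Gm}. Such a pair satisfies $\cO(\cD_i)+\cO(\cD_j)\sim\pi^*(\text{image})$ because the image $\pi(\cD_i)=\pi(\cD_j)$ is a divisor in $X$. A removable divisor cannot itself be a descended class, by \Cref{prop:descent-obstruction}.

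\textbf{Step 2 (the weight matrix).} To make Step 1 rigorous I would rather use weights. Consider the matrix
\[
M=\bigl(\wt(\cO(\cD_j)|_{Z_i})\bigr)_{i,j}.
\]
Its diagonal entries are negative. Its off-diagonal entries are $\geq 0$, and they vanish unless $e(Z_i)$ meets $\cD_j$ (by the corollary parallel to \Cref{cor:weight-inclusion}).

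The weight functional $\wt(-|_{Z_i})$ kills $\pi^*\Pic(X)$, so it factors through $\Pic(\cX/X)$. It therefore suffices to show that $M$ has rank at least $r/2$ and that the $\cD_j$ group into blocks of size at most $2$.

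Stability suggests that the fixed loci $e(Z_i)$ lie in the strictly semistable locus. Two prime removable divisors that share a grading component should then be the opposite attractors of one $\Gm$-direction, and at most two divisors can share it (the $\pm$ cocharacters). Distinct blocks are separated because $e(Z_i)\cap\cD_j=\varnothing$ outside the block. This gives a block-diagonal structure with blocks $(-a)$ or $\begin{pmatrix}-a&b\\ b'&-c\end{pmatrix}$ and $b,b'>0$.

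Irreducibility and stability force a $2\times2$ block to have rank $\geq1$. Rank $2$ is also possible, but in that case the two divisors can be removed simultaneously only if the block is SDD (\Cref{rmk:weight-computation-torus}).

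\textbf{Step 3 (counting).} Let $b$ be the number of blocks. The block structure gives $b\le\rho_\pi$, and each block has at most $2$ vertices, so $|\PD(\pi)|\le 2b\le 2\rho_\pi$. By \Cref{cor:good-finite} a face picks from each block either nothing or one vertex; it picks both only in the rank-$2$ SDD case, and then the block contributes $2$ to the rank. So each rank unit yields a factor of at most $3$, and $|\Op(\pi)|\le 3^{\rho_\pi}$. \Cref{ex:axes} gives sharpness: there $\rho_\pi=r$ (the characters of $\Gm^r$ modulo the trivial $\Pic$ of $\bbA^r$), with $2r$ divisors and $3^r$ opens.

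\textbf{Main obstacle.} The hard part is Step 2: showing that at most two prime removable divisors share a grading direction and that the grading components belonging to different divisors do not interact. This is where stability must enter; \Cref{ex:torus-no-stable-unbounded} shows the bound fails without it. I would first prove it étale locally on quotient charts $[V/\GL_n]$, using that a stable generic point forces the generic stabilizer to be finite. Then every one-parameter subgroup producing a divisorial attractor must act on the normal direction by a single nonzero weight.
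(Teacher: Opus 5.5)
There is a genuine gap in Steps 2--3. They rest on a structural claim that fails even for stable torus quotients: that the removable prime divisors split into mutually non-interacting blocks of at most two, each block contributing at least one to the rank.

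Here is a counterexample. Let $\Gm^2$ act on $\bbA^3$ by $(s,t).(x,y,z)=(sx,ty,s^{-1}t^{-1}z)$.
\begin{itemize}
\item The good quotient is $xyz\colon\bbA^3\to\bbA^1$. Each fiber over $c\neq0$ is a single free orbit, so $\pi$ is stable.
\item $\rho_\pi=2$, since $\Pic([\bbA^3/\Gm^2])\simeq\bbZ^2$ and $\Pic(\bbA^1)=0$.
\item Each of $\mathbf V(x),\mathbf V(y),\mathbf V(z)$ is removable, and any two can be removed simultaneously, because the invariants of $k[x^{\pm1},y^{\pm1},z]$ are still $k[xyz]$.
\item All three cannot be removed, since $\mathbf V(xyz)=\pi^{-1}(0)$. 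Thus $|\PD(\pi)|=3$ and $|\Op(\pi)|=7$.
\end{itemize}
Now suppose a cocharacter $\lambda=(a,b)$ has attractor $\mathbf V(y)$. Then $b<0\le a$ and $a+b\le 0$. The weights of $\cO(\mathbf V(x))$ and $\cO(\mathbf V(z))$ on its fixed component are $a\ge0$ and $-a-b\ge0$, whose sum is $-b>0$. The same computation applies to $\mathbf V(x)$ and $\mathbf V(z)$.

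So for every admissible choice of the $Z_i$, every row of your matrix $M$ has a non-zero off-diagonal entry, and the three divisors form one connected configuration of size three. Both ``at most two divisors per block'' and ``distinct blocks are separated'' fail, so the count $|\PD(\pi)|\le 2b\le 2\rho_\pi$ cannot be carried out.

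Step 1 does not rescue this. The only relation, $[\cO(\mathbf V(x))]+[\cO(\mathbf V(y))]+[\cO(\mathbf V(z))]=0$, involves three pairwise non-proportional classes, not a pair. Also, stability is not usefully applied through closed images: $\pi$ is universally closed, so every closed substack already has closed image, and a removable divisor contains no stable point at all.

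The missing idea is how stability controls relations.
\begin{enumerate}
\item Since stable points are dense, every dense open $\cU$ contains a non-empty saturated open $\pi^{-1}(U)$, where $U=X\setminus\pi(\cX\setminus\cU)$. Together with $\pi_*\cO_\cX=\cO_X$ this gives $k(X)\xrightarrow{\sim}k(\cX)$.
\item Hence any $a\in\bbQ^r$ with $\sum_i a_i[\cO_\cX(\cD_i)]=0$ in $\Pic(\cX/X)\otimes\bbQ$ yields an equality of $\bbQ$-divisors $\sum_i a_i\cD_i=\pi^*D$. To see this, compare the canonical rational section with a pulled-back one and descend the ratio.
\item Such a $D$ is supported on the prime divisors $D_1,\dots,D_q$ of $X$ whose generic fibers are covered by removable divisors. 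So the relations span a space of dimension at most $q$, and $r-q\le\rho_\pi$.
\item A preserving open must still surject onto $\eta_{D_j}$. This forces $|I_j|\ge 2$ for $I_j=\{i:\pi(\cD_i)=D_j\}$ --- groups of size at least two, the opposite of your bound. Hence $r\ge 2q$, and combined with (iii), $r\le 2\rho_\pi$.
\item The same surjectivity forbids a face from containing all of $I_j$. This gives $|\Op(\pi)|\le 2^{r-\sum_j|I_j|}\prod_j(2^{|I_j|}-1)\le 3^{r-q}\le 3^{\rho_\pi}$.
\end{enumerate}
In the example, $q=1$, $|I_1|=3$, and $7=2^3-1\le 3^2$.
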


\begin{proof}
By \cite[Thm. 5.3.1 and Prop. 5.4.1]{AlperAdequate}, the morphism $\pi$ is surjective and universally closed, and $X$ is integral. In particular, the pullback $\pi^*: k(X) \to k(\cX)$ is injective. Since $\pi$ is also stable, actually we have
\begin{claim}\label{claim:function-field-equal}
The pullback $\pi^*: k(X) \xrightarrow{\sim} k(\cX)$ is an isomorphism.
\end{claim}
\begin{proof}[Proof of Claim \ref{claim:function-field-equal}]
Let $f \in k(\cX)$, and choose an open dense substack $\cU \subseteq \cX$ on which $f$ is regular. Since stable points are dense, there is a stable point $x \in |\cU|$. The fiber over $\pi(x)$ consists only of $x$ itself, so $U:=X \setminus \pi(\cX \setminus \cU) \subseteq X$ is a non-empty open subset of $X$ with $\pi^{-1}(U) \subseteq \cU$.
Then $\cO_X \simeq \pi_*\cO_{\cX}$ gives $\Gamma(U,\cO_X) \xrightarrow{\sim} \Gamma(\pi^{-1}(U),\cO_{\cX})$. This shows that $f|_{\pi^{-1}(U)}$ is pulled back from a regular function on $U$, proving that $f$ lies in the image of $k(X)$.
\end{proof}
Suppose $\PD(\pi)=\{\cD_1,\dots,\cD_{r}\}$. Consider the morphism
\[
\alpha: \bbQ^{r} \to \Pic(\cX/X) \otimes_{\bbZ} \bbQ \text{ sending } e_i \mapsto [\cO_{\cX}(\cD_i)].
\]
where $e_i$ is the $i$-th standard basis vector. For any $a=(a_i) \in \ker \alpha$, by definition there exists an integer $n>0$ such that $\cO_{\cX}\left(n \sum_{i=1}^{r} a_i \cD_i\right) \simeq \pi^*L$ for some line bundle $L$ on $X$.
\begin{claim}\label{claim:unique-B-on-X}
There exists a unique $\bbQ$-Cartier divisor $D$ on $X$ such that $\sum_{i=1}^{r} a_i\cD_i=\pi^*D$.
\end{claim}
\begin{proof}[Proof of Claim \ref{claim:unique-B-on-X}]
Let $\cD:=\sum_{i=1}^{r} a_i \cD_i$, and fix an isomorphism $\varphi: \cO_{\cX}(n\cD) \xrightarrow{\sim} \pi^*L$. Let $\sigma_{n\cD}$ be the canonical rational section of $\cO_{\cX}(n\cD)$, so that $\ddiv(\sigma_{n\cD})=n\cD$, and choose a non-zero
rational section $s$ of $L$. Consider $g:=\varphi(\sigma_{n\cD})/\pi^*s \in k(\cX)^\times$. Since $\pi^*:k(X) \xrightarrow{\sim} k(\cX)$ is an isomorphism by Claim \ref{claim:function-field-equal}, there is a unique $f \in k(X)^\times$ such that $g=\pi^*f$. Hence $\varphi(\sigma_{n\cD})=\pi^*(fs)$. Taking divisors of rational sections gives $n\cD =\ddiv \bigl(\varphi(\sigma_{n\cD})\bigr)=\pi^*\ddiv(fs)$. Define $D:=\ddiv(fs)/n=\bigl(\ddiv(s)+\ddiv(f)\bigr)/n$, and it is a $\bbQ$-Cartier divisor satisfying $\cD=\pi^*D$.
\end{proof}
Note that for any prime divisor $E$ on $X$, the closure of every irreducible component of $\pi^{-1}(\eta_E)$ in $\cX$ has codimension one. Let $D_1,\dots,D_q$ be the prime divisors on $X$ such that the closure of each irreducible component of $\pi^{-1}(\eta_{D_j})$ in $\cX$, equipped with its reduced structure, is one of the $\cD_i$. For $j=1,\dots,q$, set
\[
I_j:=\{i \in \{1,\dots,r\}: \pi(\cD_i)=D_j\}.
\]
The sets $I_j$ are pairwise disjoint and $|I_j| \geq 2$. Indeed, if $I_j=\{i\}$, then $\cX \setminus \cD_i$ would have empty fiber over $\eta_{D_j}$, contradicting the surjectivity required by removability. Note that this gives a subset $\sqcup_{j=1}^q I_j \subseteq \PD(\pi)$ and hence
\[
r=|\PD(\pi)| \geq \sum_{j=1}^q |I_j| \geq \sum_{j=1}^q 2 = 2q.
\]

For the $\bbQ$-Cartier divisor $D$ on $X$ in Claim \ref{claim:unique-B-on-X}, every component of $\Supp(D)$ is one of the $D_j$. Since $X$ is normal \cite[Prop. 5.4.1]{AlperAdequate}, the divisor $D$ is uniquely determined by its coefficients along $D_1,\dots,D_q$. Therefore
\[
\dim_{\bbQ} \ker \alpha \leq q \text{ and hence } r-q \leq \rho_\pi.
\]
Combining this with $r \ge 2q$, we obtain $r \leq 2 \rho_\pi$. 

By \Cref{cor:good-finite}, every preserving open corresponds to a unique subset of $\PD(\pi)$, and a subset of $\PD(\pi)$ defining a preserving open cannot contain all of $I_j$, as the complement would miss the fiber over $\eta_{D_j}$. This observation gives an upper bound
\[
|\Op(\pi)| \leq 2^{r-\sum_{j=1}^q |I_j|} \prod_{j=1}^q \left(2^{|I_j|}-1\right) \leq 3^{r-\sum_{j=1}^q |I_j|} \prod_{j=1}^q \left(3^{|I_j|-1}\right) = 3^{r-q} \leq 3^{\rho_\pi},
\]
where we use $2^k-1 \leq 3^{k-1}$ for $k \geq 2$. Sharpness is achieved by \Cref{ex:axes} as $\Pic([\bbA^{2r}/\bbT]) \simeq X^*(\bbT) \simeq \bbZ^r$ and $\Pic(\bbA^r)=0$, so we have $\rho_\pi=r$.
\end{proof}
\subsection{The case of quotient stacks}
Let $G$ be a reductive group, and let $X$ be a smooth integral variety with a $G$-action that admits a good quotient $q: X \to Y$. We can write $q=\pi \circ p$, where $\pi: [X/G] \to Y$ is the induced adequate moduli space and $p: X \to [X/G]$ is the canonical atlas. Passing to the quotient stack identifies $\PD(q)$ with $\PD(\pi)$ and $\Op(q)$ with $\Op(\pi)$. Under the identification $\Pic([X/G])=\Pic^G(X)$, we have
\begin{gather*}
\rho_\pi=\rk \Pic([X/G]/Y) = \rk \Pic^G(X)/\pi^*\Pic(Y),  \\ 
\rho_q=\rk \Pic(X/Y) = \rk \Pic(X)/q^*\Pic(Y).
\end{gather*}
This suggests relating $\rho_\pi$ to $\rho_q$ by comparing the equivariant and ordinary Picard groups. Indeed, these two are related by an exact sequence.
\begin{lemma}\label{prop:reductive-picard-comparison}
There is an exact sequence
\[
X^*(G)_{\bbQ} \xrightarrow{\beta} (\Pic^G(X)/\pi^*\Pic(Y))_{\bbQ} \xrightarrow{\rF} (\Pic(X)/q^*\Pic(Y))_{\bbQ} \to 0,
\]
where $\beta(\chi)=\cO_X(\chi)$, with $\cO_X(\chi)$ the trivial line bundle with the $G$-linearization given by $\chi$, and $\rF$ is the forgetful map. If $X^G \neq \varnothing$, this extends to a split short exact sequence
\[
0 \to X^*(G)_{\bbQ} \xrightarrow{\beta} (\Pic^G(X)/\pi^*\Pic(Y))_{\bbQ}
\xrightarrow{\rF} (\Pic(X)/q^*\Pic(Y))_{\bbQ} \to 0.
\]
\end{lemma}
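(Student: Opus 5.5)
We will derive both sequences from the standard exact sequence for equivariant Picard groups of a connected group acting on a normal (here smooth) integral variety. For $G$ connected and $X$ normal integral, there is an exact sequence
\[
\Gamma(X,\cO_X^\times)^{G\text{-wt}} \to X^*(G) \xrightarrow{\beta} \Pic^G(X) \xrightarrow{\rF} \Pic(X) \to \Pic(G),
\]
in the sense of Knop--Kraft--Luna--Vust. Since $\Pic(G)$ is finite for a connected reductive group, the cokernel of $\rF$ is torsion. Tensoring with $\bbQ$ therefore gives an exact sequence
\[
X^*(G)_\bbQ \to \Pic^G(X)_\bbQ \to \Pic(X)_\bbQ \to 0.
\]
Next we pass to the quotients by $\pi^*\Pic(Y)$ and $q^*\Pic(Y)$. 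We have $\rF(\pi^*M)=q^*M$, since the pullback of $M$ to $X$ carries the canonical linearization. Hence $\rF$ maps $\pi^*\Pic(Y)$ onto $q^*\Pic(Y)$ and induces a surjection on the quotients. For exactness in the middle, suppose $[\cL]$ maps to zero. Then $\rF(\cL)\simeq q^*M$ for some $M$, so $\cL\otimes\pi^*M^{-1}$ lies in $\ker\rF=\operatorname{im}\beta$. Thus $[\cL]$ lies in the image of $\beta$ modulo $\pi^*\Pic(Y)$. This proves the first sequence; rationalization is exact, so the argument can be run integrally and then tensored with $\bbQ$.

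For the split version, assume $X^G\neq\varnothing$ and fix a fixed point $x_0\in X^G(k)$. Restriction to $x_0$ defines a homomorphism $r:\Pic^G(X)\to\Pic^G(\{x_0\})=X^*(G)$, given by the character through which $G$ acts on $\cL_{x_0}$. Clearly $r\circ\beta=\id$. To descend $r$ to the quotient we need $r(\pi^*M)=0$. This holds because $G$ acts trivially on $(q^*M)_{x_0}=M_{q(x_0)}$, as the linearization is pulled back from $Y$. So $r$ induces $\bar r:(\Pic^G(X)/\pi^*\Pic(Y))\to X^*(G)$ with $\bar r\circ\beta=\id$. Hence $\beta$ is injective, even before tensoring, and the sequence splits. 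Combined with the first part, this gives the split short exact sequence.

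The main obstacle is justifying that the cokernel of $\rF$ is torsion, and doing so in arbitrary characteristic. I would first try to cite the Knop--Kraft--Luna--Vust result, which requires $X$ normal and $G$ connected. As an alternative, I would use that for smooth $X$ some power $\cL^{\otimes m}$ of any line bundle admits a $G$-linearization, where $m$ is the order of $\Pic(G)$ (Mumford, GIT, Cor. 1.6, valid for normal varieties). The remaining steps are formal diagram chasing.
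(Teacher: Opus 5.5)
Your proof is correct and is essentially the paper's argument. The paper also starts from the equivariant Picard exact sequence with torsion cokernel, citing \cite[Prop.~2.10 and Thm.~2.14]{Brion}, which works in arbitrary characteristic and so settles the obstacle you flag; it then passes to the quotients by $\pi^*\Pic(Y)$ and $q^*\Pic(Y)$, and splits $\beta$ using the character of $G$ on the fiber at a fixed point. Your integral diagram chase for exactness in the middle, and your remark that $\beta$ is already injective before tensoring with $\bbQ$, are just more detailed versions of the same steps.
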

\begin{proof}
By \cite[Prop. 2.10 and Thm. 2.14]{Brion}, there is an exact sequence
\[
X^*(G)_{\bbQ} \to \Pic^G(X)_{\bbQ} \xrightarrow{\rF} \Pic(X)_{\bbQ} \to 0.
\]
The forgetful map $\rF$ carries $\pi^*\Pic(Y)_{\bbQ}$ onto $q^*\Pic(Y)_{\bbQ}$, so passing to the quotients gives the first sequence. If $x \in X^G(k)$, taking the character of the $G$-action
on the fiber at $x$ defines a morphism $\wt_x: \Pic^G(X) \to X^*(G)$. It vanishes on $\pi^*\Pic(Y)$ and sends $\cO_X(\chi)$ to $\chi$. Passing to the quotients and tensoring with $\bbQ$, it therefore gives a section of $\beta$, proving the second assertion.
\end{proof}
\begin{corollary}\label{cor:reductive-relative-picard-comparison}
We have $\rho_q+\rk G/[G,G] \geq \rho_\pi \geq \rho_q$. If in addition $X^G \neq \varnothing$, then $\rho_\pi=\rho_q+\rk G/[G,G]$.
\end{corollary}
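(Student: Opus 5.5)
The plan is to read off ranks from the exact sequences of \Cref{prop:reductive-picard-comparison}. Both $\rho_\pi$ and $\rho_q$ are ranks of abelian groups, and tensoring with $\bbQ$ is exact and preserves rank. So $\rho_\pi=\dim_\bbQ(\Pic^G(X)/\pi^*\Pic(Y))_\bbQ$ and $\rho_q=\dim_\bbQ(\Pic(X)/q^*\Pic(Y))_\bbQ$. Both are finite: $\rho_\pi$ by \Cref{prop:relative-picard-finiteness}, and $\rho_q$ because it is a quotient of $\rho_\pi$ under $\rF$.

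First, from the exact sequence
\[
X^*(G)_\bbQ \xrightarrow{\beta} (\Pic^G(X)/\pi^*\Pic(Y))_\bbQ \xrightarrow{\rF} (\Pic(X)/q^*\Pic(Y))_\bbQ \to 0,
\]
surjectivity of $\rF$ gives $\rho_\pi\ge\rho_q$. Exactness in the middle gives $\rho_\pi=\rho_q+\dim_\bbQ \operatorname{im}\beta\le \rho_q+\dim_\bbQ X^*(G)_\bbQ$.

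Second, I will identify $\dim_\bbQ X^*(G)_\bbQ=\rk G/[G,G]$. Every character of the connected reductive group $G$ factors through the abelianization $G/[G,G]$, which is a torus. Hence $X^*(G)=X^*(G/[G,G])\simeq\bbZ^{\dim G/[G,G]}$, and this is the meaning of $\rk G/[G,G]$ here. This step is standard; the only care needed is to note that $[G,G]$ is the derived subgroup, so that $G/[G,G]$ is the maximal torus quotient.

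Third, suppose $X^G\neq\varnothing$. Then \Cref{prop:reductive-picard-comparison} gives the split short exact sequence with $\beta$ injective. Hence $\rho_\pi=\rho_q+\dim X^*(G)_\bbQ=\rho_q+\rk G/[G,G]$.

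I expect no real obstacle: all the content is in the lemma, and the corollary is just rank additivity over $\bbQ$.
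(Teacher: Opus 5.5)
Your proposal is correct and is essentially the paper's proof: take ranks in the exact sequences of \Cref{prop:reductive-picard-comparison} and use $\rk X^*(G)=\rk G/[G,G]$. You have filled in the routine details (rank-nullity over $\bbQ$, surjectivity of $\rF$, and characters of the connected reductive group $G$ factoring through the torus $G/[G,G]$) that the paper leaves implicit.
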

\begin{proof}
Take ranks in \Cref{prop:reductive-picard-comparison}, using $\rk X^*(G)=\rk G/[G,G]$.
\end{proof}
\begin{corollary}\label{cor:ordinary-picard-counting}
Assume that $X^s \neq \varnothing$. Then
\[
|\PD(q)| \leq 2(\rho_q+\rk G/[G,G]) \text{ and } |\Op(q)| \leq 3^{\rho_q+\rk G/[G,G]}.
\]
If in addition $\Pic(X)$ is torsion, then
\[
|\PD(q)| \leq 2\rk G/[G,G] \text{ and } |\Op(q)| \leq 3^{\rk G/[G,G]}.
\]
\end{corollary}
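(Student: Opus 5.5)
The plan is to apply \Cref{thm:stack-picard-bound} to the adequate moduli space $\pi: [X/G] \to Y$ and then bound $\rho_\pi$ using \Cref{cor:reductive-relative-picard-comparison}. Under the identifications $\PD(q)=\PD(\pi)$ and $\Op(q)=\Op(\pi)$ recorded at the start of this subsection, it suffices to bound $|\PD(\pi)|$ and $|\Op(\pi)|$.

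First I check the hypotheses of \Cref{thm:stack-picard-bound} for $\cX=[X/G]$. Smoothness and integrality follow from those of $X$, since $G$ is smooth and connected. The stack is of finite type, and it has affine diagonal because $G$ is affine and $X$ is separated. Local reductivity holds because $\cX$ is a quotient by a reductive group; this is the second case of the remark after \Cref{def:locally-reductive}, and alternatively holds because $q$ is a good quotient. The substantive hypothesis is stability of $\pi$. I interpret $X^s$ as the GIT stable locus: points with closed orbit and finite stabilizer whose orbit is the full fiber of $q$. This locus is open, since the standard argument applies to a good quotient. It is nonempty by assumption, so it is dense because $X$ is irreducible. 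A point of $X^s$ gives a point $x$ of $[X/G]$ with $|\pi^{-1}(\pi(x))|=\{x\}$, so $\pi$ is a stable adequate moduli space in the sense of \Cref{defn:stable_adequate_moduli}. The one point I would double-check is that the paper's convention for $X^s$ matches this fiber-theoretic definition. Only the condition that orbits fill the fibers is needed, not finiteness of stabilizers.

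The theorem then gives $|\PD(q)|\le 2\rho_\pi$ and $|\Op(q)|\le 3^{\rho_\pi}$. Inserting the upper bound $\rho_\pi\le \rho_q+\rk G/[G,G]$ from \Cref{cor:reductive-relative-picard-comparison} yields the first pair of inequalities.

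For the second assertion, suppose $\Pic(X)$ is torsion. Then $\Pic(X)/q^*\Pic(Y)$ is torsion, so $\rho_q=0$, and substituting this into the first pair gives the second pair. The main obstacle is only the identification of the stability notion; all other steps are direct substitutions.
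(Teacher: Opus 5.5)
Your proposal is correct and is essentially the paper's proof. The paper applies \Cref{thm:stack-picard-bound} to $\pi\colon [X/G]\to Y$, inserts the upper bound $\rho_\pi\le\rho_q+\rk G/[G,G]$ from \Cref{cor:reductive-relative-picard-comparison}, and uses $\rho_q=0$ when $\Pic(X)$ is torsion. Your verification of the hypotheses fills in steps the paper leaves implicit, in particular that $X^s\neq\varnothing$ makes $\pi$ a stable adequate moduli space via openness of the GIT-stable locus and irreducibility of $X$.
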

\begin{proof}
Follows from \Cref{thm:stack-picard-bound} and \Cref{cor:reductive-relative-picard-comparison}. If $\Pic(X)$ is torsion, then $\rho_q=0$.
\end{proof}
\begin{example}\label{ex:torus-no-stable-unbounded}
The assumption $X^s \neq \varnothing$ in \Cref{cor:ordinary-picard-counting} cannot be omitted in general, even for torus actions on affine space. Indeed, let $n \geq 3$, and let $\bbT=\mathbb{G}_m^3$ act on $X=\bbA^n$ by
\[
(s,t,u).(x_j)_{j=0}^{n-1}:=\bigl(st^ju^{j^2}x_j\bigr)_{j=0}^{n-1}.
\]
The origin is the unique closed orbit, and $X^s=\varnothing$, and the good quotient is $q: X \to \Spec k$. Since $\Pic(X)=0$ and the origin is fixed, \Cref{cor:reductive-relative-picard-comparison} gives $\rho_q=0$ and $\rho_\pi=3$.

For each $i$, let $U_i:=\mathbf{D}(x_i) \subseteq X$ and consider the cocharacter $\lambda_i(z)=(z^{i^2},z^{-2i},z)$. The $\lambda_i$-weight on $x_j$ is $(j-i)^2$, so $k[U_i]^{\lambda_i}=k[x_i^{\pm 1}]$. Since the first factor of $\bbT$ scales $x_i$, it follows that
$k[U_i]^\bbT=k$. Thus each affine open $U_i \subseteq X$ preserves $q$,
and all $n$ coordinate hyperplanes are removable. Therefore
\[
|\PD(q)|\geq n \text{ and } |\Op(q)|\geq n+1.
\]
Neither is bounded by the relative Picard rank.
\end{example}
For $\Gm$-actions, however, an empty stable locus gives even better upper bounds.
\begin{lemma}\label{lem:generalgm}
Assume $G=\Gm$. If $X^s=\varnothing$, then $|\PD(q)| \leq 1 \leq 2\rho_\pi$ and $|\Op(q)| \leq 2 \leq 3^{\rho_\pi}$.
\end{lemma}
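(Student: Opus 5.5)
The plan is to use the explicit $\Gm$-classification of \Cref{thm:main-Gm}, after showing that an empty stable locus forces a very degenerate attracting decomposition. Concretely, I aim to prove that either $\Gm$ acts trivially on $X$, or $X = X_j^{+}$ (or symmetrically $X = X_j^{-}$) for a single fixed component $X_j$, which is then all of $X^\Gm$. The two bounds are then read off from \Cref{thm:main-Gm} and \Cref{cor:reductive-relative-picard-comparison}.

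\emph{Step 1: the attracting decomposition.} Suppose the action is non-trivial. Then the generic stabilizer is finite. Since $X^s=\varnothing$, a general point $x$ is not stable. Equivalently, its orbit is not closed in its fiber $q^{-1}(q(x))$: a closed one-dimensional orbit with finite stabilizer at a general point would give a dense open of stable points, by upper semicontinuity of fiber dimension. Working in an affine $\Gm$-invariant chart $q^{-1}(W)$, the Hilbert--Mumford criterion (or directly the finitely many weights of homogeneous generators) shows that $\lim_{t\to 0} t^{\pm 1}.x$ exists for one of the two signs. So a dense set of points lies in $X^{+}\cup X^{-}$. By \Cref{rmk:ev1-closed-immersion}, $X^\pm$ are closed in $X$, and their connected components $X_i^\pm$ are irreducible by smoothness. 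Since $X$ is integral, $X = X_j^{+}$ for some $j$, say, after possibly replacing $\id$ by $-\id$.

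Then $X_i^+=\varnothing$ for $i\neq j$, because the $X_i^+$ are disjoint. Every fixed component $X_i$ lies in $X_i^+$, so $X^\Gm = X_j$ is connected and non-empty. Moreover $X_j^+$ has codimension $0$, so $\PD^+=\varnothing$.

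\emph{Step 2: the opposite side.} I claim $X^{-}=X^\Gm$. A point $x\in X^{-}\subseteq X^{+}$ has limits at both $t\to 0$ and $t\to\infty$. Its orbit closure is then the image of a proper curve $\PP^1\to X$ lying in a single fiber of $q$. That fiber is affine over an affine chart of $X/\Gm$, so the image must be a point, and $x$ is fixed. Hence $X^- = X_j$, so $\PD^-\subseteq\{j\}$. By \Cref{thm:main-Gm}, $\PD(q)$ is empty or the single divisor $X_j$ (when $\codim X_j=1$), and $\Op(q)$ consists of $X$ and possibly $X\setminus X_j$. This gives $|\PD(q)|\le 1$ and $|\Op(q)|\le 2$. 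In the trivial-action case, $X^\pm = X$ has codimension $0$, so $\PD(q)=\varnothing$ and $\Op(q)=\{X\}$.

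\emph{Step 3: $\rho_\pi\ge 1$.} In both cases $X^\Gm\neq\varnothing$, so \Cref{cor:reductive-relative-picard-comparison} gives $\rho_\pi=\rho_q+\rk\Gm=\rho_q+1\ge 1$. Hence $1\le 2\rho_\pi$ and $2\le 3^{\rho_\pi}$.

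The main obstacle is Step 1: making precise that the paper's notion of an empty stable locus forces a general point to have a limit under $\id$ or $-\id$. I would handle this in an affine invariant chart, using that a closed orbit with finite stabilizer at a general point would give a dense open set of points with singleton fibers.
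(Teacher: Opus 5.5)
Your argument is correct and follows the paper's proof. An empty stable locus forces $X=X^+\cup X^-$; closedness and irreducibility then give $X=X_j^{+}$ (or $X_j^{-}$), so $X^\Gm=X_j$ is non-empty and connected, and \Cref{thm:main-Gm} together with \Cref{cor:reductive-relative-picard-comparison} gives the bounds. You make explicit two steps the paper leaves implicit: why $X^s=\varnothing$ forces a limit under $\pm\id$ at a general point, and the $\PP^1$-argument that $X^-=X^\Gm$ (the latter justifies the paper's description of the only possible non-trivial preserving open as $X\setminus X^\Gm$, although a single fixed component already suffices for the counts).
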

\begin{proof}
Since $X^s=\varnothing$, we have $X=\cup_i X_i^\pm$, and each $X_i^\pm$ is closed in $X$. Irreducibility therefore gives $X=X_i^+$ or $X=X_i^-$ for some $i$. In particular, the fixed locus $X^{\Gm}$ is non-empty and connected. By \Cref{thm:main-Gm}, the only possible non-trivial open preserving $q$ is $X \setminus X^{\Gm}$. This proves the first two bounds. Finally, by \Cref{cor:reductive-relative-picard-comparison} we have $\rho_\pi=\rho_q+1\geq1$, proving the remaining inequalities.
\end{proof}

\subsection{\texorpdfstring{The fixed loci of $\Gm$-actions}{The fixed loci of Gm-actions}}\label{sec:fixed-locus}
For $\Gm$-actions, taking weights at each fixed component relate the relative Picard group to the connected components of the fixed locus. This simple observation will give a lower bound for the number of the fixed components.
 
Let $X$ be a smooth integral variety with a $\Gm$-action that admits a good quotient $q: X \to Y$. The fixed locus $X^\Gm$ is smooth and let $m:=|\pi_0(X^\Gm)|$. Note that line bundles that pulled back from $Y$ have weight $0$, so taking fiber weights defines a well-defined morphism
\[
w: \Pic([X/\Gm]/Y) \to \bbZ^m \text{ sending } L \mapsto \bigl(\wt_{\id}(L|_{X_i})\bigr)_{i \in \pi_0(X^\Gm)}.
\]

The same proof of \Cref{prop:relative-picard-finiteness} gives $\ker w =\Pic([X/\Gm]/Y)_{\tors}$ and hence
\begin{proposition}\label{prop:fixed-weight-kernel}
We have $\Lambda_\pi \hookrightarrow \bbZ^m$ and hence $m \geq \rho_\pi$.
\end{proposition}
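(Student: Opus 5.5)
The plan follows the proof of \Cref{prop:relative-picard-finiteness}, with the stratification of the inertia replaced by the fixed components. First I will check that $w$ is well defined on $\Pic([X/\Gm]/Y)$. The weight $\wt_{\id}(L|_{X_i})$ is locally constant on the connected component $X_i$, and it is additive in $L$. A line bundle $q^*M$ has trivial $\Gm$-action on its fibers over fixed points, so it has weight $0$. Hence $w$ is a homomorphism into $\bbZ^m$, and it kills torsion because $\bbZ^m$ is torsion-free. It therefore suffices to prove $\ker w \subseteq \Pic([X/\Gm]/Y)_{\tors}$. Then $\Lambda_\pi = \Pic([X/\Gm]/Y)/\tors$ injects into $\bbZ^m$, and taking ranks gives $\rho_\pi = \rk \Lambda_\pi \leq m$.

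For the kernel, let $L$ be a $\Gm$-linearized line bundle with $\wt_{\id}(L|_{X_i})=0$ for every $i$. I will apply \Cref{lem:adequate-power-descent} to $\pi: [X/\Gm] \to Y$, which is a good (hence adequate) moduli space of finite type over $k$. This requires that, for every geometric point $x$ of $[X/\Gm]$ with closed image in $Y$, the stabilizer $G_x$ acts trivially on $L_x$. Take the unique closed point of the fiber, i.e.\ a point $x \in X$ with closed orbit. Its stabilizer is either $\Gm$ or a finite group $\mu_d$.

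If the stabilizer is $\Gm$, then $x \in X_i$ for some $i$, and the character is $\wt_{\id}(L|_{X_i})=0$. If the stabilizer is $\mu_d$, the character on $L_x$ need not be trivial. This is the main obstacle: the hypothesis of \Cref{lem:adequate-power-descent} is stated for $L$ itself, not for a power. I will handle it by passing to a power. The stabilizer orders $d$ are bounded, because $X$ is of finite type; for instance, a linear embedding $X \hookrightarrow V$ shows that $d$ divides some nonzero weight of $V$. Let $N$ be the least common multiple of the orders $d$ occurring. Then $L^{\otimes N}$ has trivial stabilizer characters at all points with finite stabilizer, and its weights at fixed points are still $0$.

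The fiber over a closed point of $Y$ has a unique closed orbit, which is either fixed or has finite stabilizer. So $L^{\otimes N}$ satisfies the hypothesis of \Cref{lem:adequate-power-descent}. Hence some $L^{\otimes Nn}$ is isomorphic to $\pi^*M$, and $[L]$ is torsion in $\Pic([X/\Gm]/Y)$. This shows $\ker w = \Pic([X/\Gm]/Y)_{\tors}$ and gives $\Lambda_\pi \hookrightarrow \bbZ^m$, so $m \geq \rho_\pi$.
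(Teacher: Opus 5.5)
Your proposal is correct and matches the paper's argument, which reruns the proof of \Cref{prop:relative-picard-finiteness}. Both pass to a uniform power of $L$ that kills every stabilizer character: at fixed points this uses the vanishing weights, and at $\mu_d$-stabilized points it uses that their characters are torsion. Both then apply \Cref{lem:adequate-power-descent}. The only difference is that you bound the orders $d$ by the weights of an equivariant embedding, whereas the paper gets the uniform power from the generic-flatness stratification of the inertia. Since $X$ need not be affine, that embedding should be taken on the finitely many $\Gm$-invariant affine opens $q^{-1}(Y_j)$ over an affine cover of $Y$ (or obtained via Sumihiro's theorem), which suffices.
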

The next result gives a geometric condition under which $m=\rho_\pi$.
\begin{theorem}\label{thm:fixed-components}
Assume that $\codim_X X_i \leq 3$ for each $i \in \pi_0(X^\Gm)$. Then $m=\rho_\pi$. If in addition $X^\Gm \neq \varnothing$, then $m=1+\rho_q$.
\end{theorem}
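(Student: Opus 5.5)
By \Cref{prop:fixed-weight-kernel} the weight map $w:\Pic([X/\Gm]/Y)\to\bbZ^m$ has torsion kernel, so $\rho_\pi=\rk(\operatorname{im} w)$. To get $m=\rho_\pi$ it suffices to show that the image of $w$ has finite index in $\bbZ^m$. Concretely, for each fixed component $X_i$ I will produce an equivariant line bundle $L_i$ whose weight vector is a nonzero multiple of $e_i$ plus possibly some controlled error.

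The natural candidates are the attracting loci $X_i^\pm$ (or, more precisely, their closures in $X$). Since $X$ is smooth and admits a good quotient, each $X_i^\pm$ is closed and smooth by \Cref{rmk:ev1-closed-immersion}, and $X_i^\pm$ is disjoint from $X_j$ for $j\neq i$. Let $c_i^\pm:=\codim_X X_i^\pm$, so that $c_i^++c_i^-=\codim_X X_i\le 3$.

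\emph{Divisorial case.} If some $c_i^\pm=1$, then $\cO_X(X_i^\pm)$ is an equivariant line bundle. By \Cref{cor:weight-inclusion}(ii) its weight vanishes at every $X_j$ with $j\ne i$, and by \Cref{lem:torus-cell} its weight at $X_i$ is nonzero. Hence its weight vector is a nonzero multiple of $e_i$.

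\emph{Non-divisorial case.} Otherwise $c_i^\pm\ge2$ for both signs, which forces $\codim X_i\ge4$ and contradicts the hypothesis. The one remaining possibility is $\codim X_i=1$, i.e.\ $X_i$ is itself a divisor and $c_i^+=1$ or $c_i^-=0$. Then either some attracting locus is divisorial, and the previous case applies, or $X_i^{\mp}$ is open, in which case $X_i$ is the divisor $X_i^{\pm}$ and again the previous case applies. I expect this to be the main obstacle: making the case analysis airtight, in particular the edge cases $c_i^\pm=0$ and the cases where $X_i^+$ or $X_i^-$ is open (the ``source'' or ``sink''), where no divisorial cell is produced directly.

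For the sink component I would use the trivial bundle with the identity character, $\cO_X(\chi)$. Its weight vector is $(1,\dots,1)$. Subtracting the divisorial vectors from the other components then isolates $e_i$ up to a multiple. At most one component has $X_i^+$ open and at most one has $X_i^-$ open, since $X$ is irreducible. If both exist, I need a second bundle to separate them; I would take $\cO_X(\chi)$ together with a divisor pulled back from a component of codimension $\le 3$, or use that in this situation all other components contribute divisors.

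For the second statement, when $X^\Gm\ne\varnothing$, \Cref{cor:reductive-relative-picard-comparison} gives $\rho_\pi=\rho_q+1$, and combining this with $m=\rho_\pi$ yields $m=1+\rho_q$.
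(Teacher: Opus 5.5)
\textbf{Gap: the case where one of $c_i^{\pm}$ is $0$.} Your divisorial step is exactly the paper's argument. The reduction $\rho_\pi=\rk(\operatorname{im} w)$ and the final deduction of $m=1+\rho_q$ are also fine. The gap is in the non-divisorial case.

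The alternative to ``some $c_i^\pm=1$'' is not ``$c_i^\pm\ge 2$ for both signs''. It is ``one of $c_i^\pm$ equals $0$''. Your claim that the only remaining possibility is $\codim_X X_i=1$ is false. For $\Gm$ acting on $\bbA^3$ with weights $(1,1,1)$, the origin is the only fixed component and $\codim_X X_0=3$. One of $X_0^\pm$ is all of $\bbA^3$ and the other is the origin, so no attracting cell is a divisor.

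Your fallback with $\cO_X(\chi)$ only works if every other component contributes a divisorial vector. You explicitly leave open the case of a distinct ``source'' and ``sink''. There, $\cO_X(\chi)$ and the divisorial vectors only produce $e_i+e_j$ together with the $e_k$ for $k\neq i,j$. The remedy you sketch (``a divisor pulled back from a component of codimension $\le 3$'') is not an argument.

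\textbf{Missing observation: $c_i^\pm=0$ forces $m=1$.} This is what the paper uses. Since $X_i^\pm$ is closed in the irreducible $X$, codimension $0$ means $X_i^\pm=X$. But every fixed point $x\in X_j$ lies in its own attracting locus, because its limit is $x$ itself. So $X_j\subseteq X_i^\pm\cap X^\Gm=X_i$ for every $j$, i.e. $X^\Gm=X_i$.

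Hence for $m\ge 2$ both $c_i^\pm\ge 1$. Then $c_i^++c_i^-=\codim_X X_i\le 3$ forces one of them to equal $1$, so every component yields a vector $c_ie_i$ with $c_i\neq 0$, and $\rho_\pi\ge m$.

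For $m\le 1$, \Cref{prop:fixed-weight-kernel} gives $\rho_\pi\le m$. When $m=1$, $w(\cO_X(\chi))\neq 0$ gives the reverse inequality.

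With this observation the source/sink scenario is vacuous, and your $(1,\dots,1)$ subtraction trick is unnecessary.
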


\begin{proof}
For $m \leq 1$, the equality follows from \Cref{prop:fixed-weight-kernel}. Suppose $m \geq 2$, and let
\[
p_i:=\codim_X X_i^- \text{ and } q_i:=\codim_X X_i^+.
\]
The loci $X_i^\pm$ are closed in $X$. Thus $p_i=0$ or $q_i=0$
would force the corresponding locus to equal $X$ and hence
$X_i=X^\Gm$, contradicting $m \geq 2$. Therefore
\[
p_i, q_i \geq 1 \text{ and } p_i+q_i=\codim_X X_i \leq 3.
\]
In particular, at least one of $X_i^\pm$ is a divisor. If $X_i^+$ is a divisor, then the weight of the line bundle $\cO_X(X_i^+)$ is $0$ on each $X_j$ with $j \neq i$ and non-zero on $X_i$, by \Cref{cor:weight-inclusion}. Hence $w(\cO_X(X_i^+))=c_i e_i$ for some $c_i \neq 0$. These $m$ vectors are linearly independent, so $m \geq \rho_\pi \geq m$, i.e., $m=\rho_\pi$. If $X^\Gm \neq \varnothing$, then $m=1+\rho_q$ follows from \Cref{cor:reductive-relative-picard-comparison}.
\end{proof}

\begin{corollary}\label{cor:fixed-torsion-picard}
Assume that $\Pic(X)$ is torsion (e.g., $X=\Spec A$ for some UFD $A$). Then $|\Op(q)| \leq 3$. If in addition $\dim X \leq 3$, then $X^\Gm$ is either empty or irreducible.
\end{corollary}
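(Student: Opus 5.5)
The plan is to reduce both assertions to the earlier counting results by first computing the relative Picard ranks. Since $\Pic(X)$ is torsion, its quotient $\Pic(X)/q^*\Pic(Y)$ is torsion, so $\rho_q=0$. For $G=\Gm$ we have $\rk G/[G,G]=1$. Hence \Cref{cor:reductive-relative-picard-comparison} gives $0\le \rho_\pi\le 1$, with $\rho_\pi=1$ whenever $X^\Gm\neq\varnothing$.

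For the bound $|\Op(q)|\le 3$ we split according to whether the stable locus is empty.
\begin{itemize}
\item If $X^s\neq\varnothing$, then $\pi:[X/\Gm]\to Y$ is a stable adequate moduli space. Since $\rho_q=0$, the second part of \Cref{cor:ordinary-picard-counting} applies directly and gives $|\Op(q)|\le 3^{\rk G/[G,G]}=3$.
\item If $X^s=\varnothing$, then \Cref{lem:generalgm} gives $|\Op(q)|\le 2\le 3$.
\end{itemize}
The only thing to check is that the standing hypotheses of those results hold. They do, since $X$ is a smooth integral variety with a $\Gm$-action admitting a good quotient, as in \Cref{sec:fixed-locus}.

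For the second assertion, assume $\dim X\le 3$ and $X^\Gm\neq\varnothing$. Every fixed component $X_i$ then satisfies $\codim_X X_i\le \dim X\le 3$; this includes the degenerate case $X_i=X$, where the codimension is $0$. So the hypothesis of \Cref{thm:fixed-components} is met. That theorem, combined with $X^\Gm\neq\varnothing$, gives $m=1+\rho_q=1$, so $X^\Gm$ is connected. The fixed locus of a $\Gm$-action on a smooth variety is smooth by \cite[Prop. 1.4.20]{Drinfeld}, and a connected smooth variety is irreducible. Therefore $X^\Gm$ is irreducible.

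There is no real obstacle here, since everything is bookkeeping from earlier results. The only point needing care is that \Cref{thm:fixed-components} handles the case $m\le 1$ separately. It is exactly the nonemptiness of $X^\Gm$ that yields $m=1+\rho_q$ rather than merely $m=\rho_\pi$, and this is what rules out $m=0$ once the fixed locus is nonempty.
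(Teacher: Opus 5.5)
Your proposal is correct and is essentially the paper's proof. You show $\rho_q=0$, so $\rho_\pi\le 1$; the bound $|\Op(q)|\le 3$ then comes from \Cref{cor:ordinary-picard-counting} when $X^s\neq\varnothing$ and from \Cref{lem:generalgm} otherwise (a case split the paper leaves implicit), and the second assertion follows from \Cref{thm:fixed-components}, which gives $m=1+\rho_q=1$, together with smoothness of $X^\Gm$ to pass from connected to irreducible.
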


\begin{proof}
By \Cref{cor:reductive-relative-picard-comparison} we have $\rho_\pi \leq 1$, and hence $|\Op(q)| \leq 3$. If $X^\Gm \neq \varnothing$, then by \Cref{thm:fixed-components} we have $m=1+\rho_q=1$.
\end{proof}

\bibliographystyle{amsalpha}
\bibliography{stack_moduli_rigidity_references}

@article{AlperAdequate,
    AUTHOR = {Alper, Jarod},
     TITLE = {Adequate moduli spaces and geometrically reductive group
              schemes},
   JOURNAL = {Algebr. Geom.},
  FJOURNAL = {Algebraic Geometry},
    VOLUME = {1},
      YEAR = {2014},
    NUMBER = {4},
     PAGES = {489--531},
      ISSN = {2313-1691,2214-2584},
   MRCLASS = {14D20 (14A20 14L15 14L24 14L30)},
  MRNUMBER = {3272912},
MRREVIEWER = {Fabio\ Tonini},
       DOI = {10.14231/AG-2014-022},
       URL = {https://doi.org/10.14231/AG-2014-022},
}

@misc{fernandez_herrero_distinguishing_2024,
  author = {Fernandez Herrero, Andres and Wei{\ss}mann, Dario and Zhang, Xucheng},
  title = {Distinguishing algebraic spaces from schemes},
  year = {2024},
  eprint = {2411.07169},
  archivePrefix = {arXiv},
  note = {\href{https://arxiv.org/abs/2411.07169}{arXiv:2411.07169}}
}

@article {MR422314,
    AUTHOR = {de Concini, C. and Procesi, C.},
     TITLE = {A characteristic free approach to invariant theory},
   JOURNAL = {Advances in Math.},
  FJOURNAL = {Advances in Mathematics},
    VOLUME = {21},
      YEAR = {1976},
    NUMBER = {3},
     PAGES = {330--354},
      ISSN = {0001-8708},
   MRCLASS = {15A72 (14L99)},
  MRNUMBER = {422314},
MRREVIEWER = {Vladimir\ L.\ Popov},
       DOI = {10.1016/S0001-8708(76)80003-5},
       URL = {https://doi.org/10.1016/S0001-8708(76)80003-5},
}

@article {MR485944,
    AUTHOR = {D\'esarm\'enien, J. and Kung, Joseph P. S. and Rota,
              Gian-Carlo},
     TITLE = {Invariant theory, {Y}oung bitableaux, and combinatorics},
   JOURNAL = {Advances in Math.},
  FJOURNAL = {Advances in Mathematics},
    VOLUME = {27},
      YEAR = {1978},
    NUMBER = {1},
     PAGES = {63--92},
      ISSN = {0001-8708},
   MRCLASS = {15A72 (14M15)},
  MRNUMBER = {485944},
       DOI = {10.1016/0001-8708(78)90077-4},
       URL = {https://doi.org/10.1016/0001-8708(78)90077-4},
}

@article {MR979587,
    AUTHOR = {Richman, David R.},
     TITLE = {The fundamental theorems of vector invariants},
   JOURNAL = {Adv. in Math.},
  FJOURNAL = {Advances in Mathematics},
    VOLUME = {73},
      YEAR = {1989},
    NUMBER = {1},
     PAGES = {43--78},
      ISSN = {0001-8708},
   MRCLASS = {20C07 (05A99 15A72)},
  MRNUMBER = {979587},
MRREVIEWER = {Andrea\ Brini},
       DOI = {10.1016/0001-8708(89)90059-5},
       URL = {https://doi.org/10.1016/0001-8708(89)90059-5},
}

@article {Luna-slice-for-stacks,
    AUTHOR = {Alper, Jarod and Hall, Jack and Rydh, David},
     TITLE = {A {L}una \'etale slice theorem for algebraic stacks},
   JOURNAL = {Ann. of Math. (2)},
  FJOURNAL = {Annals of Mathematics. Second Series},
    VOLUME = {191},
      YEAR = {2020},
    NUMBER = {3},
     PAGES = {675--738},
      ISSN = {0003-486X,1939-8980},
   MRCLASS = {14D23 (14A20 14F20 14L24 14L30)},
  MRNUMBER = {4088350},
MRREVIEWER = {Martin\ Ulirsch},
       DOI = {10.4007/annals.2020.191.3.1},
       URL = {https://doi.org/10.4007/annals.2020.191.3.1},
}

@article {Brion,
    AUTHOR = {Brion, Michel},
     TITLE = {On linearization of line bundles},
   JOURNAL = {J. Math. Sci. Univ. Tokyo},
  FJOURNAL = {The University of Tokyo. Journal of Mathematical Sciences},
    VOLUME = {22},
      YEAR = {2015},
    NUMBER = {1},
     PAGES = {113--147},
      ISSN = {1340-5705},
   MRCLASS = {14C20 (20G15)},
  MRNUMBER = {3329192},
MRREVIEWER = {Halszka\ Tutaj-Gasi\'nska},
}

@misc{stacks-project,
  author       = {The {Stacks project authors}},
  title        = {The Stacks project},
  howpublished = {\url{https://stacks.math.columbia.edu}},
  year         = {2026},
}

@article {second-flip,
    AUTHOR = {Alper, Jarod and Fedorchuk, Maksym and Smyth, David Ishii and
              van der Wyck, Frederick},
     TITLE = {Second flip in the {H}assett-{K}eel program: a local
              description},
   JOURNAL = {Compos. Math.},
  FJOURNAL = {Compositio Mathematica},
    VOLUME = {153},
      YEAR = {2017},
    NUMBER = {8},
     PAGES = {1547--1583},
      ISSN = {0010-437X,1570-5846},
   MRCLASS = {14H10 (14D23 14E30 14L30)},
  MRNUMBER = {3705268},
MRREVIEWER = {Scott\ R.\ Nollet},
       DOI = {10.1112/S0010437X16008290},
       URL = {https://doi.org/10.1112/S0010437X16008290},
}

@article{dario-zhang,
    AUTHOR = {Wei{\ss}mann, Dario and Zhang, Xucheng},
     TITLE = {A stacky approach to identifying the semistable locus of
              bundles},
   JOURNAL = {Algebr. Geom.},
  FJOURNAL = {Algebraic Geometry},
    VOLUME = {12},
      YEAR = {2025},
    NUMBER = {2},
     PAGES = {262--298},
      ISSN = {2313-1691,2214-2584},
   MRCLASS = {14H60 (14D20 14D23)},
  MRNUMBER = {4869967},
MRREVIEWER = {Francesco\ Bottacin},
       DOI = {10.14231/ag-2025-008},
       URL = {https://doi.org/10.14231/ag-2025-008},
}

@article {AHLH,
    AUTHOR = {Alper, Jarod and Halpern-Leistner, Daniel and Heinloth,
              Jochen},
     TITLE = {Existence of moduli spaces for algebraic stacks},
   JOURNAL = {Invent. Math.},
  FJOURNAL = {Inventiones Mathematicae},
    VOLUME = {234},
      YEAR = {2023},
    NUMBER = {3},
     PAGES = {949--1038},
      ISSN = {0020-9910,1432-1297},
   MRCLASS = {14D23 (14A20)},
  MRNUMBER = {4665776},
MRREVIEWER = {LingGuang\ Li},
       DOI = {10.1007/s00222-023-01214-4},
       URL = {https://doi.org/10.1007/s00222-023-01214-4},
}

@article {AlperGood,
    AUTHOR = {Alper, Jarod},
     TITLE = {Good moduli spaces for {A}rtin stacks},
   JOURNAL = {Ann. Inst. Fourier (Grenoble)},
  FJOURNAL = {Universit\'e{} de Grenoble. Annales de l'Institut Fourier},
    VOLUME = {63},
      YEAR = {2013},
    NUMBER = {6},
     PAGES = {2349--2402},
      ISSN = {0373-0956,1777-5310},
   MRCLASS = {14D23 (14L24 14L30)},
  MRNUMBER = {3237451},
MRREVIEWER = {Arvid\ Siqveland},
       DOI = {10.5802/aif.2833},
       URL = {https://doi.org/10.5802/aif.2833},
}

@article {ADL,
    AUTHOR = {Ascher, Kenneth and DeVleming, Kristin and Liu, Yuchen},
     TITLE = {Wall crossing for {K}-moduli spaces of plane curves},
   JOURNAL = {Proc. Lond. Math. Soc. (3)},
  FJOURNAL = {Proceedings of the London Mathematical Society. Third Series},
    VOLUME = {128},
      YEAR = {2024},
    NUMBER = {6},
     PAGES = {Paper No. e12615, 113},
      ISSN = {0024-6115,1460-244X},
   MRCLASS = {14H10 (14H50 14J10 14J45 14L24)},
  MRNUMBER = {4757702},
MRREVIEWER = {Luca\ Schaffler},
       DOI = {10.1112/plms.12615},
       URL = {https://doi.org/10.1112/plms.12615},
}

@misc{Drinfeld,
  author = {Drinfeld, Vladimir},
  title = {On algebraic spaces with an action of {$\mathbb G_m$}},
  year = {2013},
  eprint = {1308.2604},
  archivePrefix = {arXiv},
  note = {\href{https://arxiv.org/abs/1308.2604}{arXiv:1308.2604}}
}

@article {MR5066465,
    AUTHOR = {Belmans, Pieter and Damiolini, Chiara and Franzen, Hans and
              Hoskins, Victoria and Makarova, Svetlana and Tajakka, Tuomas},
     TITLE = {Projectivity and effective global generation of determinantal
              line bundles on quiver moduli},
   JOURNAL = {Algebra Number Theory},
  FJOURNAL = {Algebra \& Number Theory},
    VOLUME = {20},
      YEAR = {2026},
    NUMBER = {4},
     PAGES = {747--800},
      ISSN = {1937-0652,1944-7833},
   MRCLASS = {14D23 (16G20)},
  MRNUMBER = {5066465},
MRREVIEWER = {Bidhan\ Paul},
       DOI = {10.2140/ant.2026.20.747},
       URL = {https://doi.org/10.2140/ant.2026.20.747},
}

@article {MR3503967,
    AUTHOR = {Hashimoto, Mitsuyasu},
     TITLE = {Equivariant class group. {I}. {F}inite generation of the
              {P}icard and the class groups of an invariant subring},
   JOURNAL = {J. Algebra},
  FJOURNAL = {Journal of Algebra},
    VOLUME = {459},
      YEAR = {2016},
     PAGES = {76--108},
      ISSN = {0021-8693,1090-266X},
   MRCLASS = {13A50 (13C20 14L15)},
  MRNUMBER = {3503967},
MRREVIEWER = {Kiyoshi\ Baba},
       DOI = {10.1016/j.jalgebra.2016.02.025},
       URL = {https://doi.org/10.1016/j.jalgebra.2016.02.025},
}

@article {MR4255045,
    AUTHOR = {Edidin, Dan and Rydh, David},
     TITLE = {Canonical reduction of stabilizers for {A}rtin stacks with
              good moduli spaces},
   JOURNAL = {Duke Math. J.},
  FJOURNAL = {Duke Mathematical Journal},
    VOLUME = {170},
      YEAR = {2021},
    NUMBER = {5},
     PAGES = {827--880},
      ISSN = {0012-7094,1547-7398},
   MRCLASS = {14D23 (14E15 14L24)},
  MRNUMBER = {4255045},
MRREVIEWER = {Ariyan\ Javanpeykar},
       DOI = {10.1215/00127094-2020-0050},
       URL = {https://doi.org/10.1215/00127094-2020-0050},
}

@article {Benoist2013,
    AUTHOR = {Benoist, Olivier},
     TITLE = {Quasi-projectivity of normal varieties},
   JOURNAL = {Int. Math. Res. Not. IMRN},
  FJOURNAL = {International Mathematics Research Notices. IMRN},
      YEAR = {2013},
    NUMBER = {17},
     PAGES = {3878--3885},
      ISSN = {1073-7928,1687-0247},
   MRCLASS = {14C22 (14C20 14L30)},
  MRNUMBER = {3096912},
MRREVIEWER = {Liam\ O'Carroll},
       DOI = {10.1093/imrn/rns163},
       URL = {https://doi.org/10.1093/imrn/rns163},
}

@misc{halpern-leistner2014instability,
  author       = {Halpern-Leistner, Daniel},
  title        = {On the structure of instability in moduli theory},
  year         = {2014},
  eprint       = {1411.0627},
  archivePrefix = {arXiv},
  primaryClass = {math.AG},
  doi          = {10.48550/arXiv.1411.0627},
  url          = {https://arxiv.org/abs/1411.0627},
  note = {\href{https://arxiv.org/abs/1411.0627}{arXiv:1411.0627}}
}

@article{Heinloth,
    AUTHOR = {Heinloth, Jochen},
     TITLE = {Hilbert-{M}umford stability on algebraic stacks and applications to {$\mathcal G$}-bundles on curves},
   JOURNAL = {\'Epijournal G\'eom. Alg\'ebrique},
  FJOURNAL = {\'Epijournal de G\'eom\'etrie Alg\'ebrique. EPIGA},
    VOLUME = {1},
      YEAR = {2017},
     PAGES = {Art. 11, 37},
      ISSN = {2491-6765},
   MRCLASS = {14L24 (14A20 14D23)},
  MRNUMBER = {3758902},
MRREVIEWER = {Scott\ R.\ Nollet},
       DOI = {10.46298/epiga.2018.volume1.2062},
       URL = {https://doi.org/10.46298/epiga.2018.volume1.2062},
}

@article {ADLQuarticK3,
    AUTHOR = {Ascher, Kenneth and DeVleming, Kristin and Liu, Yuchen},
     TITLE = {K-stability and birational models of moduli of quartic {K}3
              surfaces},
   JOURNAL = {Invent. Math.},
  FJOURNAL = {Inventiones Mathematicae},
    VOLUME = {232},
      YEAR = {2023},
    NUMBER = {2},
     PAGES = {471--552},
      ISSN = {0020-9910,1432-1297},
   MRCLASS = {14J28 (14J10 14J45)},
  MRNUMBER = {4574660},
MRREVIEWER = {Guolei\ Zhong},
       DOI = {10.1007/s00222-022-01170-5},
       URL = {https://doi.org/10.1007/s00222-022-01170-5},
}

@misc{ABBDeVILW23,
  author        = {Ascher, Kenneth and Bejleri, Dori and Blum, Harold and DeVleming, Kristin and Inchiostro, Giovanni and Liu, Yuchen and Wang, Xiaowei},
  title         = {Moduli of boundary polarized {Calabi--Yau} pairs},
  year          = {2023},
  eprint        = {2307.06522},
  archivePrefix = {arXiv},
  note = {\href{https://arxiv.org/abs/2307.06522}{arXiv:2307.06522}}
}

@article {Symonds-Gm,
    AUTHOR = {Symonds, Peter},
     TITLE = {Smith theory for algebraic varieties},
   JOURNAL = {Algebr. Geom. Topol.},
  FJOURNAL = {Algebraic \& Geometric Topology},
    VOLUME = {4},
      YEAR = {2004},
     PAGES = {121--131},
      ISSN = {1472-2747,1472-2739},
   MRCLASS = {57S17 (14F20 55M20 55M35)},
  MRNUMBER = {2059185},
MRREVIEWER = {Claudio\ Pedrini},
       DOI = {10.2140/agt.2004.4.121},
       URL = {https://doi.org/10.2140/agt.2004.4.121},
}

@misc{BHIK2025,
  author = {Bu, Chenjing and Halpern-Leistner, Daniel and
            Ib{\'a}{\~n}ez N{\'u}{\~n}ez, Andr{\'e}s and Kinjo, Tasuki},
  title = {Intrinsic {Donaldson--Thomas} theory. {I}. {C}omponent lattices of stacks},
  year = {2025},
  eprint = {2502.13892},
  archivePrefix = {arXiv},
  note = {\href{https://arxiv.org/abs/2502.13892}{arXiv:2502.13892}}
}

@article {DolgachevHu1998,
    AUTHOR = {Dolgachev, Igor V. and Hu, Yi},
     TITLE = {Variation of geometric invariant theory quotients},
      NOTE = {With an appendix by Nicolas Ressayre},
   JOURNAL = {Inst. Hautes \'Etudes Sci. Publ. Math.},
  FJOURNAL = {Institut des Hautes \'Etudes Scientifiques. Publications
              Math\'ematiques},
    NUMBER = {87},
      YEAR = {1998},
     PAGES = {5--56},
      ISSN = {0073-8301,1618-1913},
   MRCLASS = {14L24},
  MRNUMBER = {1659282},
MRREVIEWER = {P.\ E.\ Newstead},
       URL = {http://www.numdam.org/item?id=PMIHES_1998__87__5_0},
}

@misc{DLIStableMaps,
  author        = {Di Lorenzo, Andrea and Inchiostro, Giovanni},
  title         = {Stable maps to quotient stacks with a properly stable point},
  year          = {2024},
  eprint        = {2411.16141},
  archivePrefix = {arXiv},
  note = {\href{https://arxiv.org/abs/2411.16141}{arXiv:2411.16141}}
}
\end{document}